\renewcommand{\epsilon}{\varepsilon}
\newcommand{\MF}{\mathcal{MF}}
\newcommand{\DC}{\mathcal{DC}}
\newcommand{\DOM}{\mathcal{D}}
\newcommand{\Bal}{\mathrm{Bal}}
\renewcommand{\epsilon}{\varepsilon}
\begin{document}

\title{Digital Convexity and Combinatorics on Words}
\titlerunning{Digital Convexity and Combinatorics on Words}

\author{Alessandro De Luca \inst{1}\orcidID{0000-0003-1704-773X} \and Gabriele Fici \inst{2}\orcidID{0000-0002-3536-327X}  \and Andrea Frosini \inst{3}\orcidID{0000-0001-7210-2231} }

\authorrunning{A.~De Luca \and G.~Fici \and A.~Frosini}

\institute{DIETI, Università di Napoli Federico II, Italy\\ \email{alessandro.deluca@unina.it} \and Dipartimento di Matematica e Informatica, Università di Palermo, Italy \\
\email{gabriele.fici@unipa.it} \and Dipartimento di Matematica e Informatica, Università di Firenze, Italy \\
\email{andrea.frosini@unifi.it}}

\maketitle

\begin{abstract}
An upward (resp.~downward) digitally convex  word is a binary word that best approximates from below (resp.~from above) an upward (resp.~downward) convex curve in the plane. We study these words from the combinatorial point of view, formalizing their geometric properties and highlighting connections with Christoffel words and finite Sturmian words. In particular, we study from the combinatorial perspective the operations of inflation and deflation on digitally convex words.
\end{abstract}

\section{Introduction}

Combinatorics on words and digital geometry have a long history of interactions. In particular, digital approximations of lines in the plane have a natural encoding as binary words. In this paper, we fix the binary alphabet $\{0,1\}$, where $0$ (resp.~$1$) represents a horizontal (resp.~vertical) unary step in the grid $\mathbb{N}\times \mathbb{N}$.

For example, there is a clear and well understood correspondence between finite words and approximations of digital segments, given by Christoffel words. For every pair of positive integers $(a,b)$ there is one lower Christoffel word with $a$ occurrences of $0$ and $b$ occurrences of $1$, which is the digital approximation from below  of the Euclidean segment from $(0,0)$ to $(a,b)$. Christoffel words are detailed in many classical references \cite{DBLP:journals/tcs/Reutenauer15,LothaireAlg,JTNB_1993}. 
Relationships between Christoffel words and convex digital shapes have been investigated in several papers~\cite{DBLP:conf/acpr/TarsissiCKR19,DBLP:conf/cwords/DulioFRTV17,DBLP:journals/pr/BrlekLPR09,DBLP:journals/jco/DulioFRTV22}.

In this paper, we focus on digital approximations of convex curves in the plane. A binary word is called (upward) digitally convex if it is  the digital approximation from below of a convex curve joining $(0,0)$ to $(a,b)$ and contained in the rectangle whose opposite vertices are $(0,0)$ and $(a,b)$. Every such word  has $a$ occurrences of $0$ and $b$ occurrences of $1$. In particular, all the digitally convex words with $a$ occurrences of $0$ and $b$ occurrences of $1$ lie between the lower Christoffel word with $a$ occurrences of $0$ and $b$ occurrences of $1$ and the word $1^b0^a$, the upper left contour of the rectangle.


In~\cite{DBLP:journals/pr/BrlekLPR09}, the authors gave a purely combinatorial characterization of digitally convex words: a word $w$ is digitally convex if and only if all the Lyndon words in the Lyndon factorization of $w$ are balanced (hence primitive lower Christoffel words). A primitive word (i.e., a word that cannot be written as a concatenation of copies of a shorter word) over $\{0,1\}$ is a Lyndon word if it is lexicographically smaller (for the order $0<1$) than all its proper suffixes. Every nonempty word $w$  has a unique factorization in non-increasing Lyndon words, which is called the Lyndon factorization of $w$. 

A binary word $w$ is balanced (or Sturmian) if for every length $i$, the difference between the number of occurrences of $0$s and $1$s in any two factors of length $i$ of $w$ is at most $1$. 
Since a  word over $\{0,1\}$ is a primitive lower Christoffel word if and only if it  is balanced and Lyndon, the previous characterization can be seen as a natural decomposition of  digitally convex words in straight segments.

In this paper, we give combinatorial results on digitally convex words that shed light on their combinatorial properties and are related to the geometry of convex curves that these words approximate.

The paper is organized as follows: In Section~\ref{sec:prelim} we fix the notation and give preliminary results on words, and in particular on Christoffel words, that we will need in the sequel; in Section~\ref{sec:digconv} we recall the notion of digitally convex words and provide results and characterizations. Finally, in Section~\ref{sec:infldefl} we study from the combinatorial perspective the operations of inflation and deflation on digitally convex words. In particular, we show that any digitally convex word with $a$ occurrences of $0$ and $b$ occurrences of $1$ can be obtained with a sequence of applications of the inflation operation from the lower Christoffel word with the same number of $0$s and $1$, but also with a sequence of applications of the deflation operation from the word $1^b0^a$.

\section{Preliminaries}\label{sec:prelim}

 A \emph{word} over a finite alphabet $\Sigma$ is a concatenation of letters from $\Sigma$. The \emph{length} of a word $w$ is denoted by $|w|$. The empty word $\varepsilon$ has length $0$. The set of all words 
 over  $\Sigma$ is denoted  $\Sigma^*$
 and is a free monoid
 with respect to concatenation. 
 For a letter $x\in\Sigma$, $|w|_x$ denotes the number of occurrences of $x$ in $w$. If $\Sigma=\{x_1,\ldots,x_\sigma\}$ is ordered, the vector $(|w|_{x_1},\ldots,|w|_{x_\sigma})$ is the \emph{Parikh vector} of $w$.

Let $w=uv$, with $u,v\in \Sigma^*$. We say that $u$ is a \emph{prefix} of $w$ and that $v$ is a \emph{suffix} of $w$. A \emph{factor} of $w$ is a prefix of a suffix (or, equivalently, a suffix of a prefix) of $w$. 
If $w=w_1w_2\cdots w_n$, with $w_k\in\Sigma$ for all $k$, we let $w[i..j]$ denote the nonempty factor $w_iw_{i+1}\cdots w_j$ of $w$, whenever $1\leq i\leq j\leq n$.
A factor $u$ of a word $w\neq u$ is a \emph{border} of $w$ if $u$ is both a prefix and a suffix of $w$; in this case, $w$ has \emph{period} $|w|-|u|$. A word $w$ is \emph{unbordered} if its longest border is $\varepsilon$; i.e., if its smallest period is $|w|$. 
For a word $w$, the \emph{$n$-th power} of $w$ is the word $w^n$ obtained by concatenating $n$ copies of $w$. 

Two words $w$ and $w'$ are \emph{conjugates} if $w=uv$ and $w'=vu$ for some words $u$ and $v$. The conjugacy class of a word $w$ contains $|w|$ elements if and only if $w$ is \emph{primitive}, i.e., $w=v^n$ for some $v$ implies $n=1$.

A nonempty word $w=w_1w_2\cdots w_n$, $w_k\in \Sigma$ for all $k$, is a \emph{palindrome} if  it coincides with its \emph{reversal} $w^R=w_nw_{n-1}\cdots w_1$. The empty word is also assumed to be a palindrome. 

The following proposition, whose proof is straightforward, is well known (cf.~\cite{DELUCA1982207,DBLP:journals/ijfcs/BrlekHNR04}). 

\begin{proposition}\label{prop:2pal}
 A word is a conjugate of its reversal if and only if it is a concatenation of two palindromes. Moreover, these two palindromes are uniquely determined if and only if the word is primitive.
\end{proposition}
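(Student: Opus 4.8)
The plan is to prove the equivalence first, and then to analyze uniqueness through a bijection between the factorizations of $w$ into two palindromes and the cyclic rotations of $w$ that produce $w^R$.

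For the equivalence, one direction is immediate. If $w = pq$ with $p = p^R$ and $q = q^R$, then $w^R = (pq)^R = q^R p^R = qp$, which is precisely the conjugate of $w = pq$ obtained by moving the prefix $p$ to the end; hence $w$ and $w^R$ are conjugate. For the converse, suppose $w$ is a conjugate of $w^R$, so that $w = uv$ and $w^R = vu$ for some $u,v$. Then $vu = w^R = (uv)^R = v^R u^R$, and comparing the two sides on their common prefix of length $|v| = |v^R|$ forces $v = v^R$; the remaining suffixes then give $u = u^R$. Thus $u$ and $v$ are palindromes and $w = uv$ is a concatenation of two palindromes.

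For the uniqueness statement, put $n = |w|$ and, for $0 \le k \le n$, let $\rho^k(w)$ denote the conjugate of $w$ obtained by moving its prefix of length $k$ to the end. The computation above, read backwards, shows that $w = pq$ is a factorization into two palindromes with $|p| = k$ if and only if $\rho^k(w) = w^R$: indeed $\rho^k(w) = qp$ while $w^R = q^R p^R$, so the two coincide exactly when $p = p^R$ and $q = q^R$. Hence the palindromic factorizations of $w$ are in correspondence with the values $k$ satisfying $\rho^k(w) = w^R$, and the unordered pair of palindromes is determined by $k$. Since $w$ is assumed to be a conjugate of $w^R$, at least one such $k$ exists. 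If $w$ is primitive, its $n$ rotations $\rho^0(w),\dots,\rho^{n-1}(w)$ are pairwise distinct, so exactly one of them equals $w^R$, giving a single pair of palindromes (the sole exception being when $w$ is itself a palindrome, where the trivial splits $(\varepsilon,w)$ and $(w,\varepsilon)$ coincide as an unordered pair). Conversely, if $w = z^m$ with $z$ its primitive root and $m \ge 2$, then $z^m$ being a conjugate of $(z^R)^m$ forces $z$ to be a conjugate of $z^R$, so by the first part $z = ab$ with $a,b$ palindromes; the periodicity of $w$ then yields several admissible $k$, for instance the factorizations $a\cdot b(ab)^{m-1}$ and $(ab)^{m-1}a\cdot b$, producing different pairs.

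The main obstacle is the bookkeeping in this last step: one must verify that the several rotations $k$ really give \emph{distinct} unordered pairs of palindromes, rather than the same pair read in two orders. The delicate configurations are exactly the symmetric ones --- when $w$ is a palindrome, and the borderline case $m = 2$ --- and I expect to resolve them by comparing first the lengths of the candidate palindromes and, when these coincide, the palindromes themselves, using the primitivity of $z$ to exclude $a = b$. Everything else is routine application of the reversal identity $(pq)^R = q^R p^R$ and of the fact that a binary word has two equal rotations if and only if it is non-primitive.
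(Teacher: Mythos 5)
The paper does not actually prove this proposition---it is stated as ``straightforward'' and ``well known,'' with the proof deferred to the cited references---so the only basis for comparison is the standard argument, which is essentially what you give. Your proof of the equivalence via the identity $vu = v^Ru^R$ and comparison of prefixes of equal length is correct, and your bijection between palindromic splittings $w=pq$ with $|p|=k$ and rotations satisfying $\rho^k(w)=w^R$ is the right mechanism for the uniqueness statement (including the observation that $k=0$ and $k=n$ collapse to the same unordered pair when $w$ is a palindrome). The one piece of bookkeeping you defer---that for non-primitive $w=(ab)^m$, $m\ge 2$, the factorizations $a\cdot b(ab)^{m-1}$ and $(ab)^{m-1}a\cdot b$ give \emph{distinct} unordered pairs---does go through exactly as you predict: equality of the pairs would force either $(ab)^{m-1}=\varepsilon$, impossible, or $a=b$, which would make $z=ab=a^2$ imprimitive (or empty), a contradiction. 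So your proof is correct and complete modulo that routine verification.
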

 


\subsection{Lyndon Words}


 A primitive word over an ordered alphabet is a \emph{Lyndon word} if it is lexicographically smaller than all its conjugates (or, equivalently, lexicographically smaller than all its proper suffixes).

In the binary case, we have the class of Lyndon words for the order $0<1$ and the class of Lyndon words for the reversed order $1<0$. Whenever not differently specified, we assume the order $0<1$.

The following result, originally due to Chen, Fox and Lyndon  (1958) is a classical result in combinatorics on words (see~\cite{Lot01}):

\begin{theorem}\label{thm:CLF}
 Every word $w$ has a unique non-increasing factorization in Lyndon words.
\end{theorem}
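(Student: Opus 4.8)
The plan is to prove existence and uniqueness separately, both resting on the standard \emph{concatenation lemma}: if $u$ and $v$ are Lyndon words with $u < v$ in lexicographic order, then the product $uv$ is again a Lyndon word, and in fact $u < uv < v$. I would establish this first, since it is the workhorse for both halves of the theorem. Its proof is a direct check of the proper suffixes of $uv$: those lying entirely inside $v$ are handled because $v$ is Lyndon together with $u < v$, while those of the form $u'v$, where $u'$ is a proper suffix of $u$, are handled because $\ell$ being Lyndon gives $u < u'$. This verification is short but requires keeping track of the prefix cases, and it is the cleaner of the two technical ingredients.

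For existence, I would start from the trivial factorization of $w$ into single letters, each of which is a Lyndon word. Whenever the current factorization $\ell_1\ell_2\cdots\ell_m$ fails to be non-increasing, there is an adjacent ascent $\ell_i < \ell_{i+1}$; by the concatenation lemma I replace the pair by the single Lyndon word $\ell_i\ell_{i+1}$, which strictly decreases the number of factors. Since the number of factors cannot drop below one, this merging procedure halts, and at termination no adjacent ascent remains, i.e.\ the factorization is non-increasing. This yields at least one factorization of the required form.

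For uniqueness, the idea is to pin down one factor intrinsically. I would show that in any non-increasing Lyndon factorization $w = \ell_1\cdots\ell_k$ the last factor $\ell_k$ is exactly the lexicographically smallest suffix of $w$, a quantity depending only on $w$. To see this I compare $\ell_k$ with every other suffix. A suffix starting strictly inside some factor $\ell_i$ has the form $s\,\ell_{i+1}\cdots\ell_k$, where $s$ is a proper suffix of $\ell_i$; since $\ell_i$ is Lyndon we have $s > \ell_i \geq \ell_k$, and checking the two prefix cases forces the whole suffix to exceed $\ell_k$. A suffix of the form $\ell_i\cdots\ell_k$ with $i<k$ begins with $\ell_i \geq \ell_k$, and using the non-increasing property (collapsing to the equality case $\ell_i = \cdots = \ell_k$ when $\ell_i = \ell_k$) it likewise exceeds $\ell_k$. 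Hence $\ell_k$ is determined by $w$ alone; stripping it off and inducting on $|w|$ forces every factor, giving uniqueness. (Equivalently one could characterize the \emph{first} factor $\ell_1$ as the longest Lyndon prefix of $w$ and induct from the left.)

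The step I expect to be the main obstacle is this suffix-comparison in the uniqueness argument: the suffixes that begin in the middle of an internal factor must be dealt with carefully, and the bookkeeping of strict versus non-strict inequalities, together with the prefix-versus-nonprefix cases in each lexicographic comparison, is where the proof is most delicate.
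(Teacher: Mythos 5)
Your proposal is correct. Note that the paper does not actually prove Theorem~\ref{thm:CLF}: it cites it as the classical Chen--Fox--Lyndon theorem and only sketches existence via Property~\ref{propLyn} (repeatedly merging an adjacent ascent $\ell_i<\ell_{i+1}$ into the single Lyndon word $\ell_i\ell_{i+1}$), which is exactly your existence argument; you only need the ``if'' direction of that property, and your suffix-by-suffix check of it is sound, including the point that $uv<v$ requires comparing $v$ with its own proper suffix when $u$ is a prefix of $v$. Your uniqueness argument --- identifying the last factor $\ell_k$ of any non-increasing Lyndon factorization with the lexicographically least suffix of $w$, then stripping and inducting --- is the standard completion and goes through: the only delicate comparison, a suffix $s\ell_{i+1}\cdots\ell_k$ with $s$ a proper suffix of $\ell_i$, is handled by noting that $s>\ell_i\geq\ell_k$ and that $x>y$ implies $xz>y$ for any $z$ (the prefix obstruction only arises in the opposite direction), while suffixes $\ell_i\cdots\ell_k$ with $\ell_i=\ell_k$ have $\ell_k$ as a proper prefix and are therefore larger. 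So your write-up supplies a complete proof where the paper defers to the literature.
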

The factorization from the previous theorem is called the \emph{Lyndon factorization} (or sometimes the Chen--Fox--Lyndon factorization) of $w$.


\begin{example}\label{ex:dc}
Let $w = 0101001001$. The Lyndon factorization of $w$ is \[01\cdot 01\cdot 001\cdot 001.\]
\end{example}

\begin{example}\label{ex:dc2}
Let $w = 1100$. The Lyndon factorization of $w$ is \[1\cdot 1\cdot 0\cdot 0.\]
\end{example}

A way to obtain the Lyndon factorization is by using the following:

\begin{property}\label{propLyn}
If $u$ and $v$ are Lyndon words, then  $u<v$ if and only if $uv$ is a Lyndon word.
\end{property}

Starting from the trivial factorization $w=w_1\cdots w_n$, with $|w_i|=1$ for every $i$, one repeatedly applies Property~\ref{propLyn} until the factors occur in non-increasing order.

Every Lyndon word $w$ of length $|w|\geq 2$ has a \emph{standard factorization} $w=uv$, where $v$ is the lexicographically least proper suffix of $w$ (or, equivalently, the longest proper suffix of $w$ that is a Lyndon word), see~\cite{Lot01}.

\subsection{Balanced Words}

\begin{definition}
 A word over $\{0,1\}$ is \emph{balanced} if the number of $0$s (or, equivalently, $1$s) in every two factors of the same length differs by at most $1$.
\end{definition}

We let $\Bal$ denote the set of balanced words over the alphabet $\{0,1\}$. Balanced words are precisely the finite factors of infinite Sturmian words~\cite{LothaireAlg}. In fact, balance is a \emph{factorial property}, i.e., every factor of a balanced word is itself balanced.

The number of  balanced words of  length $n$ is known~\cite{Mignosi_On_the_number,Lipatov198267} to be 
\[1+\sum_{k=1}^n (n-k+1)\phi(k).\]
In particular, it grows polynomially in $n$.



 \subsection{Christoffel Words}

We now define Christoffel words and recall their main properties. We point the reader to the classical references~\cite{Book08,ReutenauerMarkoff,LothaireAlg,Be07} for more details on Christoffel words.

Christoffel words are powers of balanced Lyndon words. But let us introduce them from the point of view of digital geometry.

In what follows, we fix the alphabet $\{0,1\}$ and represent a word over $\{0,1\}$ as a path in $\mathbb{N}\times \mathbb{N}$ starting at $(0,0)$, where $0$ (resp., $1$) encodes a horizontal (resp., vertical) unit segment.
For every pair of nonnegative integers $(a,b)$ (not both $0$), every word $w$ that encodes a path from $(0,0)$ to $(a,b)$ must have exactly $a$ zeroes and $b$ ones, i.e., it has Parikh vector $(a,b)=(|w|_0,|w|_1)$. 
We define the \emph{slope} of a word $w$ with Parikh vector $(a,b)$ as the rational number $b/a$ if $a\neq 0$, or $\infty$ otherwise.

\begin{definition}
Given a pair of nonnegative integers $(a,b)$ (not both $0$), the \emph{lower (resp.,~upper) Christoffel word} $w_{a,b}$ (resp.,~$W_{a,b}$), of slope $b/a$, is the word encoding the path from $(0,0)$ to $(a,b)$ that is closest from below (resp.,~from above) to the Euclidean segment, without crossing it. 
 \end{definition}

 In other words, $w_{a,b}$ (resp.~$W_{a,b}$) is the digital approximation from below (resp., from above) of the Euclidean segment joining $(0,0)$ to $(a,b)$.
 For example, the lower Christoffel word $w_{7,4}$ is the word $00100100101$ (see~Fig.~\ref{fig:chr}).
 
 By construction, we have that the upper Christoffel word $W_{a,b}$ is the reversal of the lower Christoffel word $w_{a,b}$ (and the two words are conjugates, see below).

 Notice that by definition we allow $a$ or $b$ (but not both) to be $0$, that is, we have $w_{n,0}=W_{n,0}=0^n$ and $w_{0,n}=W_{0,n}=1^n$, so that the powers of words of length $1$ are both lower and upper Christoffel words.

 \begin{figure}[ht]
     \centering
     \includegraphics[width=50mm]{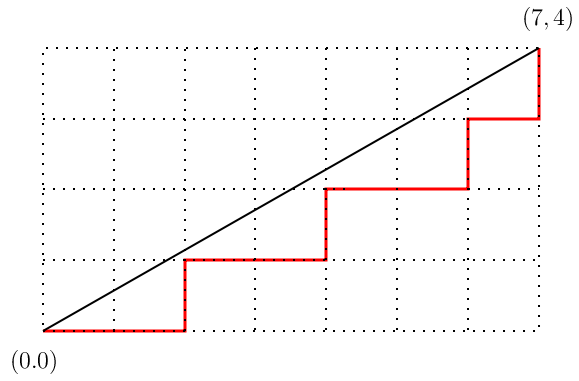}
     \caption{The lower Christoffel word $w_{7,4}=00100100101$. The upper Christoffel word $W_{7,4}=10100100100$ is the reversal of $w_{7,4}$.}
     \label{fig:chr}
 \end{figure} 

If $a$ and $b$ are coprime, the Christoffel words $w_{a,b}$ and $W_{a,b}$ do not intersect the Euclidean segment joining $(0,0)$ to $(a,b)$ (other than at the end points) and are primitive words. If instead $a=n\alpha$ and $b=n\beta$ for some $n>1$, then $w_{a,b}=(w_{\alpha,\beta})^n$ (resp.,~$W_{a,b}=(W_{\alpha,\beta})^n$).  Hence, $w_{a,b}$ (resp.,~$W_{a,b}$) is primitive if and only if $a$ and $b$ are coprime.

Therefore, letting $\phi$ denote the Euler totient function, for every $n> 1$ there are $2\phi(n)$ primitive Christoffel words of length $n$ (in particular, there are $\phi(n)$ primitive lower Christoffel words and $\phi(n)$ primitive upper Christoffel words).

We now give a fundamental definition for the combinatorics of Christoffel words.

\begin{definition}
 A \emph{central word} is a word that has coprime periods $p$ and $q$ and length $p+q-2$.
\end{definition}

It is well known that central words are binary palindromes~\cite{DBLP:journals/tcs/LucaM94,LothaireAlg}. The following theorem gives a structural characterization (see~\cite{DBLP:journals/tcs/Luca97a}).

\begin{theorem}[Combinatorial Structure of Central Words]\label{thm:CSCW}
 A word $w$ is central if and only if it is a power of a letter or there exist palindromes $P$ and $Q$ such that $w=P01Q=Q10P$.
 Moreover,
\begin{itemize}
\item $P$ and $Q$ are central words;
\item $|P|+2$ and $|Q|+2$ are coprime and $w$ has periods $|P|+2$ and $|Q|+2$;
\item if $|P|<|Q|$, $Q$ is the longest palindromic (proper) suffix of $w$.
\end{itemize}
\end{theorem}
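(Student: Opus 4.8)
The plan is to prove the two implications separately, the essential tool being the classical Fine--Wilf periodicity lemma: a word having periods $p,q$ and length at least $p+q-\gcd(p,q)$ also has period $\gcd(p,q)$. The whole theorem hinges on the observation that a central word lies exactly one letter below this threshold, since for coprime $p,q$ we have $|w|=p+q-2=(p+q-\gcd(p,q))-1$. For the easy implication, suppose $w=P01Q=Q10P$ with $P,Q$ palindromes (a letter power being immediate). First I would read the periods off the border structure: the two factorizations exhibit $P$ as both a prefix and a suffix of $w$, hence a border of length $|P|$ giving the period $|w|-|P|=|Q|+2$, and symmetrically $Q$ gives the period $|P|+2$. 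To get coprimality, suppose $d=\gcd(|P|+2,|Q|+2)>1$; since $|w|=(|P|+2)+(|Q|+2)-2\ge(|P|+2)+(|Q|+2)-d$, Fine--Wilf forces period $d$, so $w[i]$ depends only on $i\bmod d$. As $d$ divides $|P|+2$ and $|Q|+2$, the positions $|P|+1$ and $|Q|+1$ are congruent mod $d$, yet the two factorizations give $w[|P|+1]=0\neq 1=w[|Q|+1]$, a contradiction. Hence the periods are coprime, and with the length relation $w$ is central.

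For the converse, let $w$ be central and assume it is not a power of a letter, so its least period $p$ satisfies $p\ge 2$. A short Fine--Wilf argument (using that $w$ is not unary) shows that $p$ equals the smaller of the coprime periods furnished by centrality, whence $q:=|w|+2-p$ is the larger one, and $2\le p<q$ with $|w|=p+q-2$. Since $w$ is a palindrome (central words are known to be palindromes), each of its borders is a palindrome; in particular the longest border $Q'=w[1..q-2]$ (of length $|w|-p$) and the shorter border $P'=w[1..p-2]$ (of length $|w|-q$) are palindromes. Writing out the prefix $P'$, the two central letters $a=w_{p-1}$ and $b=w_p$, and the suffix $Q'$, one gets $w=P'abQ'$, and the symmetry $w_i=w_{|w|+1-i}$ gives $w=Q'baP'$. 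It then suffices to prove $a\neq b$: if $ab=01$ set $(P,Q)=(P',Q')$, and if $ab=10$ set $(P,Q)=(Q',P')$, so that in either case $w=P01Q=Q10P$ with $P,Q$ palindromes.

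The main obstacle is exactly this last point $a\neq b$, and I would resolve it through the extremal case of Fine--Wilf. Suppose $a=b$. The palindromic symmetry gives $w_{q-1}=w_p=b=a=w_{p-1}$, and I claim one may append the single letter $a$ to $w$ to obtain a word $w'$ of length $p+q-1$ still having both periods $p$ and $q$: the only new constraints, at the positions $q-1$ and $p-1$ that reach the new last position $p+q-1$ under the two shifts, both demand the value $a$ and are therefore satisfied precisely because $a=b$. Now $|w'|=p+q-1=p+q-\gcd(p,q)$, so Fine--Wilf yields period $1$; thus $w'$, and hence $w$, is a power of a letter, contradicting our assumption. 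Therefore $a\neq b$, which completes the characterization.

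For the remaining claims: the periods $|P|+2$ and $|Q|+2$ are $p$ and $q$, coprime by the choice above, so nothing more is needed there. To see that $P$ and $Q$ are central, note they are the two palindromic borders $P',Q'$. The longer one, $Q'$, is central directly: $P'$ is a prefix of $Q'$ and, by the palindromicity of $Q'$, also a suffix, hence a border giving $Q'$ the period $|Q'|-|P'|=q-p$; together with the period $p$ it inherits as a prefix of $w$, and since $\gcd(p,q-p)=1$ with $|Q'|=p+(q-p)-2$, this makes $Q'$ central. The shorter border is then central by induction on the length. Finally, in the case $|P|<|Q|$ we have $(P,Q)=(P',Q')$, and since $w$ is a palindrome every palindromic proper suffix is also a prefix, hence a border, and so has length at most $|w|-p=q-2$; as this bound is attained by the palindrome $Q=Q'$, it is the longest palindromic proper suffix of $w$.
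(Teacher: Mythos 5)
The paper does not actually prove Theorem~\ref{thm:CSCW}: it is imported from de Luca's structural theory of Sturmian words (the reference \cite{DBLP:journals/tcs/Luca97a}), so there is no internal proof to compare yours against. Judged on its own, your argument is correct and takes a genuinely elementary, purely periodicity-theoretic route. The forward implication (reading the periods $|Q|+2$ and $|P|+2$ off the borders $P$ and $Q$, then forcing coprimality by Fine--Wilf together with the clash $w[|P|+1]=0\neq 1=w[|Q|+1]$ at positions congruent modulo $d$) is clean. In the converse, identifying $P$ and $Q$ with the two palindromic borders of lengths $p-2$ and $q-2$, and then settling the crucial point $a\neq b$ by appending one letter to reach length $p+q-1=p+q-\gcd(p,q)$ and invoking the sharpness of Fine--Wilf, is a nice and correct idea; it in effect rederives the known fact that central words are the extremal words for the Fine--Wilf theorem. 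This is more self-contained than the classical proofs, which go through standard words or iterated palindromic closure.

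Two dependencies deserve to be made explicit. First, you use that central words are palindromes; the paper states this as known immediately before the theorem, so it is admissible here, but your proof is not self-contained without it (and the usual proofs of palindromicity are themselves nontrivial). Second, two steps are only gestured at: (i) that the least period of a non-unary central word equals $\min(p,q)$ does follow from a double application of Fine--Wilf (a period $r<p$ gives period $\gcd(r,p)$, which combined with $q$ gives period $\gcd(\gcd(r,p),q)=1$), but you should say so; and (ii) the centrality of the shorter border $P'$ ``by induction'' requires the induction hypothesis to be strengthened to assert that \emph{both} canonical borders of a central word are central, since $P'$ may be either the longer or the shorter canonical border of $Q'$ depending on whether $p\gtrless q-p$. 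Neither point is a real gap, but both should be written out.
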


\begin{example}
The word $w=010010$ is a central word, since it has periods $3$ and $5$ and length $6=3+5-2$. We have $010010=010 \cdot 01 \cdot 0=0 \cdot 10 \cdot 010$, so that $P=010$ and $Q=0$.
\end{example}
 
\begin{proposition}[\cite{Pirillo2001}]\label{prop:paldec}
A word $C$ is central if and only if the words $0C1$ and $1C0$ are conjugates.
\end{proposition}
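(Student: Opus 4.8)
The plan is to prove the two implications separately, using the palindromic structure of central words (Theorem~\ref{thm:CSCW}) together with the two-palindrome characterization of reversal-conjugacy (Proposition~\ref{prop:2pal}). For the direction $(\Rightarrow)$, assume $C$ is central. Then $C$ is a palindrome, so $(0C1)^R = 1C^R0 = 1C0$; thus it suffices to show that $0C1$ is a conjugate of its own reversal, which by Proposition~\ref{prop:2pal} is equivalent to writing $0C1$ as a concatenation of two palindromes. I would read off such a factorization from Theorem~\ref{thm:CSCW}: if $C$ is a power of a letter the factorization is immediate (e.g.\ $0C1 = 0^{|C|+1}\cdot 1$ when $C = 0^{|C|}$), while if $C = P01Q$ with $P,Q$ palindromes, then $0C1 = 0P01Q1 = (0P0)(1Q1)$ is a product of two palindromes.

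The direction $(\Leftarrow)$ is the core of the argument. Assuming $0C1$ and $1C0$ are conjugate, I would write $0C1 = xy$ and $1C0 = yx$ with $x,y$ nonempty, and compare the first and last letters of these words: this forces $x$ to begin and end with $0$ and $y$ to begin and end with $1$. The two extreme cuts are disposed of directly: $|x| = 1$ gives $1C = C1$, hence $C = 1^{|C|}$, and $|y| = 1$ gives $0C = C0$, hence $C = 0^{|C|}$, both central. In the remaining cases I would set $m = |C|$, $p = |x|$ and $q = m + 2 - p$ and expand the identity $yx = 1C0$ position by position. Carrying out this comparison yields simultaneously that $C$ admits the period $p$ and the period $q$ (with $p + q = |C| + 2$) and the two pointwise values $C[p-1] = 0$ and $C[q-1] = 1$.

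It then remains to show that $p$ and $q$ are coprime, and this is the step I expect to be the main obstacle. Here I would invoke the theorem of Fine and Wilf: if $d = \gcd(p,q) \ge 2$, then since $|C| = p + q - 2 \ge p + q - d$ the word $C$ has period $d$; but $p-1 \equiv q-1 \equiv -1 \pmod d$, so this would force $C[p-1] = C[q-1]$, contradicting $C[p-1] = 0 \ne 1 = C[q-1]$. Hence $\gcd(p,q) = 1$, and $C$ has coprime periods $p$ and $q$ with $|C| = p + q - 2$, i.e.\ $C$ is central by definition. The delicate points to verify are that the boundary indices arising in the position-by-position comparison are genuine positions of $C$ (so that $C[p-1]$ and $C[q-1]$ are well defined and the period-$d$ relation connects them) and the clean separation of the two degenerate cuts.
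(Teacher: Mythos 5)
The paper does not actually prove this proposition---it is imported verbatim from Pirillo's 2001 paper with only a citation---so there is no internal argument to compare yours against; your proof has to stand on its own, and it does. The forward direction is exactly the combination one would expect of Theorem~\ref{thm:CSCW} and Proposition~\ref{prop:2pal}: since $C$ is a palindrome, $1C0=(0C1)^R$, and the factorization $0C1=(0P0)(1Q1)$ (or $0^{n+1}\cdot 1$, resp.\ $0\cdot 1^{n+1}$, in the degenerate cases) exhibits $0C1$ as a product of two palindromes. For the converse, I checked your letter-by-letter comparison: with $0C1=xy$, $1C0=yx$, $|x|=p\ge 2$, $|y|=q\ge 2$ and $m=|C|=p+q-2$, one gets $C[j]=C[j+p]$ for $1\le j\le q-2$ and $C[j]=C[j+q]$ for $1\le j\le p-2$ (so $C$ has periods $p$ and $q$), together with $C[q-1]=1$ (matching the final $1$ of $0C1$) and $C[p-1]=0$ (matching the final $0$ of $1C0$); both indices lie in $\{1,\dots,m\}$ precisely because $p,q\ge 2$, which is the point you rightly flagged as needing care. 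The Fine--Wilf step is also sound: $|C|=p+q-2\ge p+q-d$ exactly when $d=\gcd(p,q)\ge 2$, the resulting period $d$ propagates equality along any residue class modulo $d$ inside $[1,m]$, and $p-1\equiv q-1\pmod d$ then contradicts $C[p-1]\ne C[q-1]$. The two extreme cuts $|x|=1$ and $|y|=1$ are handled correctly via the commutation $1C=C1$, resp.\ $0C=C0$. The only external ingredient is the Fine--Wilf theorem, which the paper never states, but it is standard (it is in Lothaire, which the paper cites), so this is a legitimate and complete proof; as it happens, it is essentially the classical periodicity argument one finds in Pirillo's original paper.
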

 

\begin{proposition}[\cite{DBLP:journals/tcs/BerstelL97}]
 A word is a primitive lower (resp.,~upper) Christoffel  word if and only if it has length $1$ or it is of the form $0C1$ (resp.,~$1C0$) where $C$ is a central word.
\end{proposition}

\begin{example}
 Let $a=7$ and $b=4$. We have  $w_{7,4}=00100100101=0\cdot 010010010 \cdot 1$, where $~C=010010010$ is a central word since it has periods $3$ and $8$ and length $9=3+8-2$; see Fig.~\ref{fig:central}.
\end{example}

So, central words are the ``central'' factors of primitive Christoffel  words of length~$\geq 2$. 
A geometric interpretation of the central word $C$ is the following: it encodes the intersections of the Euclidean segment joining $(0,0)$ to $(a,b)$ ($0$ for a vertical intersection and $1$ for a horizontal intersection). In other words, the word $C$ is the cutting sequence of the Euclidean segment joining $(0,0)$ to $(a,b)$; see again Fig.~\ref{fig:central}.

 \begin{figure}[ht]
     \centering
    \includegraphics[width=50mm]{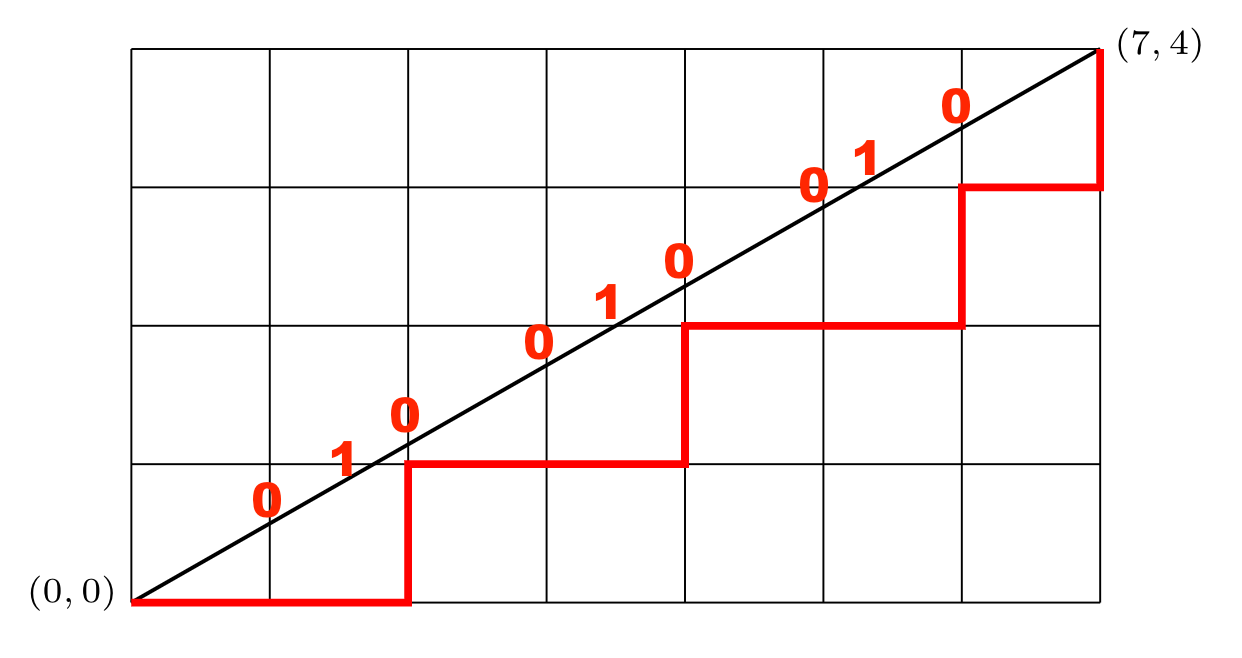}
     \caption{The central word $C=010010010$ is the central factor of the primitive lower Christoffel  word $w_{7,4}=0C1$. It encodes the intersections of the Euclidean segment joining $(0,0)$ and $(7,4)$ with the grid ($0$ for a vertical intersection and $1$ for a horizontal intersection).}
     \label{fig:central}
 \end{figure}

\begin{proposition}[\cite{DBLP:journals/ejc/BertheLR08}]\label{prop:multinv}
Let $w_{a,b}=0C1$ be a primitive lower Christoffel word. The central word $C$ has periods $a'$ and $b'$, the multiplicative inverses of $a$ and $b$ modulo $a+b$, respectively.
\end{proposition}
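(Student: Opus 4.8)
The plan is to reduce the statement to an explicit arithmetic description of the central word and then read off its periods directly. Set $n=a+b$; since $\gcd(a,b)=1$ we have $\gcd(a,n)=\gcd(b,n)=1$, so $a$ and $b$ are invertible modulo $n$, and I write $a',b'\in\{1,\dots,n-1\}$ for their inverses. Because $b\equiv -a\pmod n$, their inverses satisfy $b'\equiv -a'\pmod n$, whence $a'+b'\equiv 0\pmod n$ and therefore $a'+b'=n=|C|+2$. This already matches the arithmetic constraint in the definition of a central word, so the real task is to prove that $a'$ and $b'$ are genuinely periods of $C$.

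First I would record the classical \emph{mechanical} formula for the lower Christoffel word: writing $w_{a,b}=w_1\cdots w_n$, one has $w_k=\lfloor kb/n\rfloor-\lfloor(k-1)b/n\rfloor$, equivalently $w_k=1$ exactly when $(k-1)b\bmod n\ge a$. (This is immediate from $w_{a,b}$ being the best approximation from below of the segment of slope $b/a$, and can be verified directly.) Since $C=w_2\cdots w_{n-1}$, this yields the compact description
\[
C_j=1 \iff jb\bmod n\ge a,\qquad j=1,\dots,n-2,
\]
that is, $C_j$ is obtained by coding the orbit of the rotation $j\mapsto jb\bmod n$ on $\mathbb{Z}/n\mathbb{Z}$ by the indicator of the window $\{a,a+1,\dots,n-1\}$.

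The heart of the argument is then a one-line shift computation. Replacing $j$ by $j+b'$ sends $jb\bmod n$ to $(jb+1)\bmod n$ (since $bb'\equiv 1$), so $C_j$ and $C_{j+b'}$ agree unless adding $1$ to the argument crosses one of the two boundaries of the coding window, i.e.\ unless $jb\bmod n\in\{a-1,\,n-1\}$. Solving these two congruences gives $j\equiv n-1-b'$ and $j\equiv n-b'\pmod n$, for which the shifted index equals $n-1$ and $n$ respectively, both strictly larger than $|C|=n-2$. Hence no disagreement can occur within the range $1\le j\le n-2-b'$ on which the period condition must hold, and $b'$ is a period of $C$. The symmetric computation with the shift $a'$ uses $a'b\equiv -1\pmod n$: now $C_j$ and $C_{j+a'}$ can differ only when $jb\bmod n\in\{a,0\}$, which forces $j\equiv n-1$ or $j\equiv 0\pmod n$, again out of range; so $a'$ is a period as well.

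The delicate point, and the only real obstacle, is precisely this boundary bookkeeping: the two positions at which the coding function jumps must fall exactly at the extremities, so that they impose no constraint inside the word. This is the combinatorial manifestation of the fact that a central word of length $p+q-2$ sits one short of the Fine--Wilf bound for its two coprime periods; it is what makes $a'$ and $b'$ periods on the nose rather than forcing the trivial period $1$. Having shown both $a'$ and $b'$ are periods, and having already checked $a'+b'=|C|+2$ with $\gcd(a',b')=1$, we conclude that $a',b'$ are exactly the pair of coprime periods exhibiting $C$ as a central word, which is the claim.
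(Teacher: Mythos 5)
The paper offers no proof of Proposition~\ref{prop:multinv}: it is imported verbatim from the cited work \cite{DBLP:journals/ejc/BertheLR08}, so there is no internal argument to compare yours against. Judged on its own terms, your proof is correct and self-contained. The mechanical formula $w_k=\lfloor kb/n\rfloor-\lfloor(k-1)b/n\rfloor$ is the standard arithmetic description of the lower Christoffel word, and restricting to $C=w_2\cdots w_{n-1}$ gives exactly your rotation coding $C_j=1\iff jb\bmod n\ge a$. The two shift computations are the heart of the matter and they check out: since $bb'\equiv 1$ and $a'b\equiv -1\pmod{n}$, the index shifts $j\mapsto j+b'$ and $j\mapsto j+a'$ become the value shifts $x\mapsto x+1$ and $x\mapsto x-1$ on $x=jb\bmod n$, and the only residues at which the indicator of $\{a,\dots,n-1\}$ can jump pull back to positions whose shifted index is congruent to $n-1$ or $0$ modulo $n$, hence outside $\{1,\dots,n-2\}$; so no violation of either period can occur. (The bookkeeping is confirmed by the paper's running example $w_{7,4}$: there $a'=8$, $b'=3$, and $C=010010010$ indeed has periods $8$ and $3$.) Two minor remarks. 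First, the observations $a'+b'=n=|C|+2$ and $\gcd(a',b')=1$ are pleasant consistency checks but are not needed: the proposition only asserts that $a'$ and $b'$ are periods. Second, one of $a'$, $b'$ can exceed $|C|$ (e.g.\ for $w_{n-1,1}$, where $a'=n-1$ and $|C|=n-2$), so you are implicitly using the convention that $p$ is a period of $u$ whenever $u_j=u_{j+p}$ holds for all $j$ with both indices in range, vacuously if $p\ge|u|$; this is the convention that the definition of central words already presupposes, so it is harmless, but it deserves a word.
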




\begin{theorem}[\cite{DBLP:journals/tcs/BerstelL97}]\label{thm:Lyndon}
Let $w$ be a  word over $\{0,1\}$. Then $w$ is a primitive (lower or upper) Christoffel word if and only if it is balanced and unbordered.
\end{theorem}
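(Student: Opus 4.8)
The plan is to prove the characterization ``primitive Christoffel iff balanced and unbordered'' by establishing both implications separately, and I would begin by fixing conventions: it suffices to treat the lower case, since the upper Christoffel word is the reversal of the lower one, and both balance and unborderedness are preserved under reversal. I would also dispose of the trivial length-$1$ case (the words $0$ and $1$) immediately, so that the interesting content concerns primitive Christoffel words of length $\geq 2$, which by the cited Berstel--de Luca result are exactly the words of the form $0C1$ (resp.~$1C0$) with $C$ central.

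\medskip

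\textbf{Christoffel implies balanced and unbordered.} Let $w = w_{a,b} = 0C1$ be a primitive lower Christoffel word, so that $a$ and $b$ are coprime and $C$ is central. Balance is the easier half: geometrically, $w$ is the digital approximation from below of a straight segment of slope $b/a$, and any two factors of equal length count lattice steps along paths that stay within unit distance of parallel copies of that segment, so their numbers of $1$s can differ by at most $1$; combinatorially, one can invoke the standard fact that a word is balanced iff it is a factor of a Sturmian word and note that every Christoffel word is such a factor. For unborderedness I would argue by contradiction using the palindromic structure from Theorem~\ref{thm:CSCW}: since $C$ is central it is a palindrome, so $w^R = 1C0$ is the upper Christoffel word, and if $w = 0C1$ had a nonempty border $u$, then $u$ would be simultaneously a prefix (hence beginning with $0$) and a suffix (hence ending with $1$) of $w$, forcing $u$ to start with $0$ and end with $1$; combining this with the period information $|P|+2$ and $|Q|+2$ of $C$ being coprime should yield that the induced period of $w$ divides $\gcd$-type combinations that contradict primitivity or coprimality of $a,b$. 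The cleanest route here is to use Proposition~\ref{prop:multinv}: a border of length $|u|$ of $w$ would give $w$ a period $|w|-|u|$ strictly less than $|w|=a+b$, but any period of $0C1$ other than $a+b$ must be compatible with the periods $a',b'$ of $C$, and one checks these cannot produce a full-word border.

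\medskip

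\textbf{Balanced and unbordered implies Christoffel.} Conversely, suppose $w$ is balanced and unbordered with $|w|\geq 2$. An unbordered word cannot be a proper power, so $w$ is primitive. A balanced word of length $\geq 2$ must begin and end with different letters, for otherwise the two length-$1$ factors at the extremities together with a balance argument on the extremal prefix/suffix would create a border; more directly, if $w$ began and ended with the same letter $x$, then $x$ would be a border, contradicting unborderedness. So $w$ begins with $0$ and ends with $1$ (the case $1\ldots0$ gives the upper Christoffel word symmetrically), i.e.~$w = 0C1$ for some word $C$. The task is then to show $C$ is central. Here I would use the balance of $w$ to force $C$ to have two coprime periods summing appropriately to $|C|+2=|w|$: balance of $w=0C1$ transfers to a strong periodicity constraint on $C$, and the characterization of central words as exactly the binary words admitting coprime periods $p,q$ with length $p+q-2$ (our definition) is precisely what must be verified. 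Equivalently, one shows $0C1$ and $1C0$ are conjugate and invokes Proposition~\ref{prop:paldec}.

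\medskip

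The main obstacle I expect is the balanced-and-unbordered $\Rightarrow$ central step, specifically extracting the exact coprime periods of $C$ from the balance condition on $0C1$: balance gives that $C$ avoids factors $0^{k+1}$ and $1^{k+1}$ while containing $0^k$ and $1^k$ for an appropriate $k$, but turning this into the two-coprime-periods-with-length-$p+q-2$ normal form of a central word requires the Fine--Wilf-type interaction between periods together with the absence of a border to pin down coprimality. I would handle this by leaning on the established theory of standard/central words rather than reproving it from scratch, reducing the claim to showing that an unbordered balanced word is a palindrome flanked by $0$ and $1$, which connects back to Theorem~\ref{thm:CSCW} and Proposition~\ref{prop:paldec}.
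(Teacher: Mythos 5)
The paper itself does not prove this statement---it is imported verbatim from Berstel and de Luca---so your proposal has to stand on its own, and as written it has two genuine gaps, one in each direction. For ``Christoffel $\Rightarrow$ unbordered,'' your contradiction setup stalls: knowing that a border $u$ of $w=0C1$ must start with $0$ and end with $1$ only tells you $u=0v1$ with $v$ a border of $C$, and central words are typically heavily bordered (for example $C=010010010$ has the border $010$), so no contradiction follows from that observation alone. The subsequent appeal to the periods $a',b'$ of Proposition~\ref{prop:multinv} is not attached to any computation, and ``one checks these cannot produce a full-word border'' is precisely the step that would need to be carried out. The clean route, which the paper itself gestures at in the sentence immediately following the theorem, is that a primitive lower Christoffel word is a Lyndon word, and every Lyndon word is unbordered: a proper nonempty prefix of a word is lexicographically smaller than the word, while a proper nonempty suffix of a Lyndon word is lexicographically larger, so no factor can be both.

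The second gap is more serious: in the converse direction, after the (correct) reductions that $w$ is primitive, begins and ends with distinct letters, and hence has the form $0C1$, the entire content of the theorem is the claim that balance plus unborderedness forces $C$ to be central, and your proposal reduces this claim to itself. You correctly name the target (coprime periods $p,q$ with $|C|=p+q-2$, or conjugacy of $0C1$ and $1C0$ via Proposition~\ref{prop:paldec}), but you supply no mechanism for extracting it from the balance hypothesis; ``leaning on the established theory'' without naming a lemma whose hypothesis is \emph{balanced and unbordered} is exactly the missing idea, and the obvious candidates (for instance the identification of balanced Lyndon words with primitive lower Christoffel words) are consequences of the very theorem being proved, so invoking them risks circularity. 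A workable completion is an induction on $|w|$ via the Christoffel morphisms $0\mapsto 0^{m+1}1$, $1\mapsto 0^m1$ used in the proof of Lemma~\ref{lem:good-defl}: one first shows that a balanced unbordered word beginning with $0$ and ending with $1$ lies in $\{0^m1,0^{m+1}1\}^*$ for some $m$ (balance bounds the internal runs of $0$s to two consecutive values, and unborderedness handles the boundary runs), then checks that the preimage under the morphism is again balanced and unbordered and applies the inductive hypothesis. Without some such argument the proof is incomplete.
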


In particular, the set of primitive lower Christoffel  words is precisely the set of balanced Lyndon words over the alphabet $\{0,1\}$  for the order $0<1$.

\begin{proposition}[\cite{JTNB_1993}]\label{prop:BL}
 For every coprime positive integers $a,b$, the primitive lower Christoffel word $w_{a,b}$ is the greatest (for the lexicographic order) word among all Lyndon words having Parikh vector $(a,b)$.
\end{proposition}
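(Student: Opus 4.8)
The plan is to show that $w_{a,b}$ is a Lyndon word with Parikh vector $(a,b)$, and then that it dominates any other such Lyndon word in the lexicographic order. The first part is immediate from the results already established: by Theorem~\ref{thm:Lyndon} the primitive lower Christoffel word $w_{a,b}$ is balanced and unbordered, and the remark following that theorem identifies the primitive lower Christoffel words precisely with the balanced Lyndon words for the order $0<1$; so $w_{a,b}$ is indeed a Lyndon word with the correct Parikh vector, establishing that the set over which we maximize is nonempty and contains $w_{a,b}$.

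The heart of the argument is the maximality. Let $v$ be any Lyndon word with Parikh vector $(a,b)$, and suppose for contradiction that $w_{a,b}<v$ lexicographically; I would derive a geometric contradiction. First I would observe that, since $w_{a,b}$ is the digital approximation \emph{from below} of the Euclidean segment $S$ from $(0,0)$ to $(a,b)$, any word $w$ with Parikh vector $(a,b)$ encoding a path that stays weakly below $S$ satisfies $w\leq w_{a,b}$: reading left to right, at each position the path of $w_{a,b}$ is as high as geometrically possible without crossing $S$, and lexicographic order on $\{0,1\}$ with $0<1$ corresponds to preferring the letter $0$ (a horizontal, hence lower, step) at the earliest differing position, which is exactly what ``closest from below'' forces. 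Conversely, any word strictly lexicographically greater than $w_{a,b}$ must, at its first point of difference, take a vertical step where $w_{a,b}$ takes a horizontal one, pushing its path strictly above $S$ at that point.

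So the task reduces to showing that a Lyndon word $v$ with Parikh vector $(a,b)$ cannot have a path that rises strictly above the segment $S$. The natural tool is the Lyndon/balance connection together with the slope interpretation. The key step I expect to be the main obstacle is translating ``path goes above $S$'' into a combinatorial statement that contradicts the Lyndon property. I would argue as follows: if the path of $v$ crosses above $S$, then there is a proper prefix $p$ of $v$ whose slope $|p|_1/|p|_0$ strictly exceeds the global slope $b/a$. Writing $v=pv'$, the suffix $v'$ then has slope strictly \emph{less} than $b/a$, which means $v'$ has a strictly higher proportion of $0$s than $v$ does; intuitively a suffix that is ``flatter'' than the whole word should be lexicographically smaller than $v$ only if $v$ is not Lyndon. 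Making this precise is where the work lies: I would compare $v$ with its rotation $v'p$, using that for coprime $(a,b)$ the lower Christoffel word is the unique Lyndon word in its conjugacy class, and more generally use Property~\ref{propLyn} and the standard factorization to show that the existence of such a prefix $p$ of strictly larger slope forces a proper suffix of $v$ to be lexicographically $\leq v$, contradicting that $v$ is Lyndon.

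An alternative, cleaner route that I would seriously consider is to bypass the geometry and argue purely combinatorially by comparing against the Lyndon factorization. Given the characterization of digitally convex words (all Lyndon factors balanced), one knows $w_{a,b}$ itself is a single balanced Lyndon factor. For any Lyndon word $v$ of Parikh vector $(a,b)$ that is \emph{not} balanced, one can locate two factors of some common length whose numbers of $1$s differ by at least $2$, and exhibit from the ``heavier'' factor an early prefix of $v$ that is lexicographically larger than the corresponding prefix of $w_{a,b}$ would allow; conversely, if $v$ \emph{is} balanced and Lyndon then $v=w_{a,b}$ by Theorem~\ref{thm:Lyndon} (uniqueness of the balanced unbordered word in the conjugacy class). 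This dichotomy — balanced forces equality, unbalanced forces $v<w_{a,b}$ — would complete the maximality. I expect the unbalanced case to be the delicate one, requiring careful bookkeeping of where the first surplus $1$ appears relative to the Christoffel word's staircase pattern.
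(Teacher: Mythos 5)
The paper offers no proof of Proposition~\ref{prop:BL} (it is cited from Borel and Laubie), so your attempt must stand on its own. The preliminary part is fine: $w_{a,b}$ is a balanced Lyndon word with Parikh vector $(a,b)$, and your observation that a word $v>w_{a,b}$ must, at the first position where it differs from $w_{a,b}$, have a prefix whose endpoint lies strictly above the segment $S$ is correct. The fatal problem is the statement you reduce everything to, namely that \emph{a Lyndon word with Parikh vector $(a,b)$ cannot have a path rising strictly above $S$}. This is false. Take $v=00011001$: it is primitive, and all of its proper suffixes ($0011001$, $011001$, $11001$, $1001$, $001$, $01$, $1$) are lexicographically greater than $v$, so $v$ is a Lyndon word with Parikh vector $(5,3)$; yet its prefix $00011$ ends at the point $(3,2)$, which lies strictly above the segment from $(0,0)$ to $(5,3)$ (indeed $2>9/5$, i.e., the prefix has slope $2/3>3/5$). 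Note that $v=00011001<00100101=w_{5,3}$, so the proposition itself is safe --- only your intermediate lemma fails. What you actually need is the much weaker statement that a Lyndon word cannot go above $S$ \emph{at the first position where it differs from $w_{a,b}$}, which is essentially the proposition itself; your sketch (comparing $v$ with the rotation $v'p$, uniqueness of the Lyndon conjugate, Property~\ref{propLyn}) gives no argument for this, and since the stronger statement it is meant to establish is false, the strategy cannot be rescued by ``making it precise.''

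The alternative route fares no better. The balanced case is fine: a balanced Lyndon word with Parikh vector $(a,b)$ is a primitive lower Christoffel word by Theorem~\ref{thm:Lyndon}, hence equals $w_{a,b}$. But the entire content of the proposition is the unbalanced case, for which you offer only the intention to ``exhibit an early prefix of $v$ that is lexicographically larger than the corresponding prefix of $w_{a,b}$ would allow'' --- which, besides not being an argument, points in the wrong direction: a prefix of $v$ larger than the corresponding prefix of $w_{a,b}$ would establish $v>w_{a,b}$, whereas you must show $v<w_{a,b}$. The same example illustrates the danger: $00011001$ is an unbalanced Lyndon word, and at its first discrepancy with $w_{5,3}$ it takes a \emph{horizontal} step where $w_{5,3}$ takes a vertical one. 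As written, neither route proves the maximality claim.
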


\begin{proposition}[\cite{DELUCA2024114935}]\label{cor:surprising}
 For all coprime positive integers $a,b$, the lower Christoffel word $w_{a,b}$ is the smallest (in the lexicographic order)  word among all balanced words having Parikh vector $(a,b)$.
\end{proposition}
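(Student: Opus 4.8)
The plan is to compare words through their prefix $1$-counts. For a word $x=x_1\cdots x_n$ write $H_x(i)=|x_1\cdots x_i|_1$ for its \emph{height function}. The basic observation is that if two words of the same length satisfy $H_v(i)\le H_u(i)$ for all $i$, then $v\le u$ lexicographically: at the first position $i^*$ where they differ the heights before $i^*$ agree, so $v_{i^*}\le u_{i^*}$, forcing $v_{i^*}=0$ and $u_{i^*}=1$. Now $w_{a,b}$ is itself balanced (being a Christoffel word) and has Parikh vector $(a,b)$, so it lies in the set considered in Proposition~\ref{cor:surprising}; hence it suffices to show that its height function is pointwise minimal among all balanced words with Parikh vector $(a,b)$. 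Since $w_{a,b}$ is the lower mechanical word of slope $b/n$ with $n=a+b$, one has $H_{w_{a,b}}(i)=\lfloor ib/n\rfloor$ for every $i$, so the whole statement reduces to a single inequality.

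The key inequality I would isolate is: \emph{if $u$ is balanced with $|u|=n$ and $|u|_1=b$, then $H_u(i)\ge\lfloor ib/n\rfloor$ for all $0\le i\le n$.} To attack it I pass to the discrepancy $D(i)=nH_u(i)-bi$, which also equals $ai-nZ(i)$ where $Z(i)=|u_1\cdots u_i|_0$ and $a=n-b$, and note $D(0)=D(n)=0$. A short computation shows that $H_u(i)\ge\lfloor ib/n\rfloor$ follows as soon as $D(i)>-n$. Coprimality then enters cheaply: since $\gcd(b,n)=\gcd(b,a)=1$ we have $D(i)\equiv -bi\pmod n$, which is never $\equiv 0$ for $0<i<n$, so the value $-n$ is never attained and it is enough to prove the non-strict bound $D(i)\ge -n$.

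The heart of the proof is a doubling-window estimate applied at an extremum of $D$. Let $i_0$ be a point where $D$ attains its minimum, and suppose first $2i_0\le n$. The prefix $u_1\cdots u_{i_0}$ and the block $u_{i_0+1}\cdots u_{2i_0}$ are two factors of equal length $i_0$, so balance forces the block to contain at least $Z(i_0)-1$ zeroes; that is, $Z(2i_0)\ge 2Z(i_0)-1$, which translates into $D(2i_0)\le 2D(i_0)+n$. As $i_0$ is the global minimum we also have $D(2i_0)\ge D(i_0)$, and the two inequalities together give $D(i_0)\ge -n$. When $2i_0>n$ one runs the symmetric estimate on the two length-$(n-i_0)$ factors $u_{2i_0-n+1}\cdots u_{i_0}$ and $u_{i_0+1}\cdots u_n$ (equivalently, apply the previous case to the reversal of $u$, which is again balanced). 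This establishes $D\ge -n$ everywhere, hence the key inequality, hence $H_{w_{a,b}}\le H_u$ pointwise, hence $w_{a,b}\le u$, which is Proposition~\ref{cor:surprising}.

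The step I expect to be the main obstacle is exactly the key inequality. The naive route—averaging the balance condition over a tiling of $u$ by length-$i$ windows—loses an additive constant because in general $i\nmid n$, and it is precisely this constant that separates the sharp bound from a vacuous one. Note the bound really is sharp: it is attained by $w_{a,b}$ itself, for which $D(i)=-(bi\bmod n)$ reaches $-(n-1)$. The doubling-window estimate anchored at the minimum of $D$ is what avoids the loss, and hitting on it is the crux; the remaining ingredients (the height-versus-lex translation, the identity $H_{w_{a,b}}(i)=\lfloor ib/n\rfloor$, and the modular remark) are routine.
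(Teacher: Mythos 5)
The paper does not actually prove Proposition~\ref{cor:surprising}: it is imported without proof from the cited reference \cite{DELUCA2024114935}, so there is no in-paper argument to compare yours against. Judged on its own, your proof is correct and self-contained. The reduction from lexicographic minimality to the pointwise bound $H_u(i)\ge\lfloor ib/n\rfloor$ is sound (and in fact proves the stronger statement that $w_{a,b}$ is the minimum for the dominance order $\sqsubseteq$ of Section 3.3, of which the lexicographic order is a linear extension); the identity $H_{w_{a,b}}(i)=\lfloor ib/n\rfloor$ is the standard mechanical-word description of lower Christoffel words; the modular observation correctly upgrades $D(i)\ge -n$ to the strict bound via coprimality; and the doubling-window estimate at the minimum of $D$ checks out in both cases ($D(2i_0)\le 2D(i_0)+n$ together with $D(2i_0)\ge D(i_0)$ when $2i_0\le n$, and the mirror-image computation on the two length-$(n-i_0)$ windows when $2i_0>n$). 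One small inaccuracy: the parenthetical claim that the case $2i_0>n$ ``equivalently'' follows by applying the first case to the \emph{reversal} of $u$ is off, since $D_{\widetilde u}(i)=-D_u(n-i)$, so the minimum of $D_u$ corresponds to the \emph{maximum} of $D_{\widetilde u}$; the correct symmetry is reversal composed with binary complementation. This does not matter, because the direct estimate you give for that case is valid on its own. Note also that a shorter derivation is available from results already quoted in this paper: Proposition~\ref{prop:BL} plus Theorem~\ref{thm:digcon} yield it for digitally convex words (this is exactly Theorem~\ref{thm:dura2}), but your argument has the advantage of working directly from the balance condition and of giving the sharp quantitative bound $D\ge-(n-1)$.
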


\begin{theorem}[\cite{JTNB_1993}]
    \label{thm:slopes}
    Let $(a,b)$ and $(c,d)$ be pairs of nonnegative integers, both distinct from $(0,0)$, such that $b/a\neq d/c$. Then
    \[w_{a,b}<w_{c,d}\iff\frac ba<\frac dc.\]
\end{theorem}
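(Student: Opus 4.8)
The plan is to reduce the lexicographic comparison to a comparison of floor functions, via the classical description of $w_{a,b}$ as a lower mechanical (Beatty) word. Set $n=a+b$ and $\rho=b/n=b/(a+b)\in[0,1]$. The key preliminary fact I would establish is that the number of $1$'s among the first $k$ letters of $w_{a,b}$ equals $\lfloor k\rho\rfloor$ for every $0\le k\le n$; equivalently, the $k$-th letter of $w_{a,b}$ is $\lfloor k\rho\rfloor-\lfloor(k-1)\rho\rfloor$. This follows directly from the geometric definition: a monotone lattice path from $(0,0)$ to $(a,b)$ meets each antidiagonal $x+y=k$ in exactly one lattice point, the Euclidean segment meets that antidiagonal at height $k\rho$, and the point of $w_{a,b}$ on it, being the highest lattice point weakly below the segment (``closest from below''), lies at height $\lfloor k\rho\rfloor$. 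Since consecutive heights differ by $0$ or $1$ (as $0\le\rho\le1$), these antidiagonal maxima do assemble into a valid monotone path, so this path is indeed the lower Christoffel word. The description is insensitive to coprimality (for $g=\gcd(a,b)$ it reproduces $w_{a,b}=(w_{a/g,b/g})^g$) and also covers the degenerate slopes, giving $0^a$ and $1^b$.

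Next I would translate the hypothesis: writing $\mu=b/a$ for the slope, we have $\rho=\mu/(1+\mu)$, a strictly increasing bijection from $[0,\infty]$ to $[0,1]$, so $b/a<d/c$ is equivalent to $\rho_1:=b/(a+b)<\rho_2:=d/(c+d)$. By totality of both orders it suffices to prove the single implication $\rho_1<\rho_2\Rightarrow w_{a,b}<w_{c,d}$. Put $h_i(k)=\lfloor k\rho_i\rfloor$; since $\rho_1<\rho_2$ we have $h_1(k)\le h_2(k)$ for all $k$. Let $k^*$ be the least index with $h_1(k^*)<h_2(k^*)$. Such an index exists and in fact $k^*\le c+d$, as one sees by evaluating at $k=c+d$: there $h_2(c+d)=d$, while $(c+d)\rho_1<(c+d)\rho_2=d$ forces $h_1(c+d)\le d-1$.

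Now I would compare the words. By minimality of $k^*$ we have $h_1(k)=h_2(k)$ for $k<k^*$, so $w_{a,b}$ and $w_{c,d}$ agree on their first $k^*-1$ letters; and since $h_i(k^*)-h_i(k^*-1)\in\{0,1\}$ with $h_1(k^*-1)=h_2(k^*-1)$ but $h_1(k^*)<h_2(k^*)$, the $k^*$-th letter of $w_{a,b}$ is $0$ while that of $w_{c,d}$ is $1$. It remains to close two subcases. If $c+d\le a+b$, then $k^*\le c+d=\min(a+b,c+d)$, both words have a $k^*$-th letter, and they first differ there with $w_{a,b}$ carrying $0$ and $w_{c,d}$ carrying $1$, whence $w_{a,b}<w_{c,d}$. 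If instead $a+b<c+d$, then either $k^*\le a+b$ and the same argument applies, or $k^*>a+b$, in which case $w_{a,b}$ coincides with the length-$(a+b)$ prefix of $w_{c,d}$ and hence is a proper prefix of it, again giving $w_{a,b}<w_{c,d}$.

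I expect the only genuinely delicate point to be the first step: pinning down the height formula $\lfloor k\rho\rfloor$ from the informal phrase ``closest from below without crossing,'' including the verification that the antidiagonal maxima form a monotone path and that this path is exactly $w_{a,b}$. Once that identity is in hand (it is classical; see the cited references on Christoffel and mechanical words), everything else is elementary floor arithmetic, and the non-coprime and boundary cases need no separate treatment.
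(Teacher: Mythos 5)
The paper does not prove this statement at all: it is imported as a known result from Borel and Laubie \cite{JTNB_1993}, so there is no in-paper argument to compare yours against. Judged on its own, your proof is correct and complete. The one genuinely load-bearing step is the identification of $w_{a,b}$ with the lower mechanical word of slope $\rho=b/(a+b)$, i.e.\ the height formula $|w_{a,b}[1..k]|_1=\lfloor k\rho\rfloor$; you correctly flag this as the delicate point and your justification is sound, since a monotone path point on the antidiagonal $x+y=k$ at height $y$ lies weakly below the segment exactly when $y\le k\rho$, the choices $\lfloor k\rho\rfloor$ assemble into a monotone path because consecutive values differ by $0$ or $1$, and any admissible path is dominated by this one antidiagonal by antidiagonal. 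From there the reduction of $b/a<d/c$ to $\rho_1<\rho_2$ via the increasing bijection $\mu\mapsto\mu/(1+\mu)$, the existence of a first index $k^*\le c+d$ where $\lfloor k\rho_1\rfloor<\lfloor k\rho_2\rfloor$, and the two-case finish (first discrepancy is a $0$ versus a $1$, or $w_{a,b}$ is a proper prefix of $w_{c,d}$) are all correct elementary floor arithmetic, and you rightly note that non-coprime and degenerate slopes need no separate treatment. This is essentially the classical mechanical-word proof one would find in the cited sources; an alternative route in the literature deduces the statement from the Stern--Brocot/Christoffel-tree construction, but your version is more self-contained relative to the definitions actually given in this paper.
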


Since primitive lower Christoffel words are Lyndon words, every primitive lower Christoffel word of length $|w|\geq 2$ has a standard factorization. 

On the other hand, by Proposition~\ref{prop:paldec},
a primitive lower Christoffel word is a conjugate of its reversal (the corresponding upper Christoffel word); hence by Proposition~\ref{prop:2pal}, every primitive lower Christoffel word of length $|w|\geq 2$ has a unique \emph{palindromic factorization}.

Let $w_{a,b}=0C1$ be a primitive lower Christoffel word, so that $a$ and $b$ are coprime integers. If the central word  $C$ is not a power of a single letter, then by Theorem~\ref{thm:CSCW} there exist central words $P$ and $Q$ such that $C=P01Q=Q10P$ so that $w_{a,b}=0C1=0P0\cdot 1Q1=0Q1\cdot 0P1$. 

Hence, we have the following factorizations:

\begin{enumerate}
 \item $0C1=0P0\cdot 1Q1$ (palindromic factorization);
 \item $0C1=0Q1\cdot 0P1$ (standard factorization).
\end{enumerate}


If instead $C=0^n$ (the case $C=1^n$ is analogous)  we have:
\begin{enumerate}
 \item $0C1=0^{n+1}\cdot 1$ (palindromic factorization);
 \item $0C1=0\cdot 0^n1$ (standard factorization).
\end{enumerate}



From the geometric point of view, the standard factorization divides $w_{a,b}$ in two shorter Christoffel words, and it determines the point $S$ of the encoded path that is \emph{closest} to the Euclidean segment from $(0,0)$ to $(a,b)$; the palindromic factorization, instead, divides $w_{a,b}$ in two palindromes and determines the point $S'$ that is \emph{farthest} from the Euclidean segment (see~\cite{DBLP:journals/ita/BorelR06,LamaPhD}). An example is given in~Fig.~\ref{fig:dec}.  

\begin{figure}[ht]
     \centering
 \includegraphics[width=120mm]{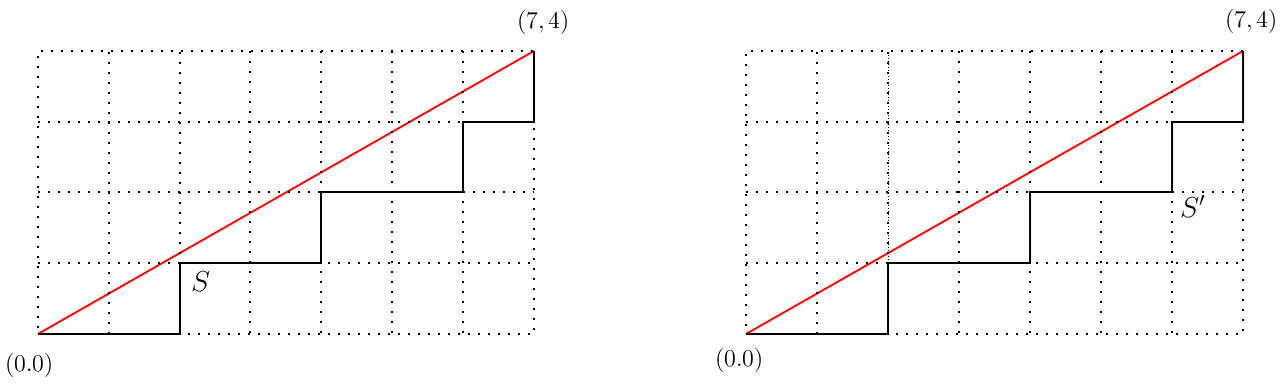}    
 \caption{The standard factorization $0Q1\cdot 0P1 = 001 \cdot 00100101$ (left) and the palindromic factorization $0P0\cdot 1Q1 = 00100100\cdot 101$ (right) of the lower Christoffel word $w_{7,4}$. The point $S$ determined by the standard factorization is the closest to the Euclidean segment, while the point $S'$ determined by the palindromic factorization is the farthest.}
     \label{fig:dec}
 \end{figure}


\section{Digitally Convex Words}\label{sec:digconv}

We now introduce the main object of study of this paper.

\begin{definition}
    A binary word with $a$ occurrences of $0$ and $b$ occurrences of $1$ is an \emph{upward (resp.~downward) digitally convex word} if it is the best approximation from below (resp.~above) of an upward (resp.~downward) convex curve that joins $(0,0)$ and $(a,b)$ and is contained in the rectangle whose opposed vertices are $(0,0)$ and $(a,b)$. 
\end{definition}

For example, the word $w = 0101001001$ of Example~\ref{ex:dc} is (upward) digitally convex, as shown in Fig.~\ref{fig:digconv}.

\begin{figure}[ht]
     \centering
 \includegraphics[width=50mm]{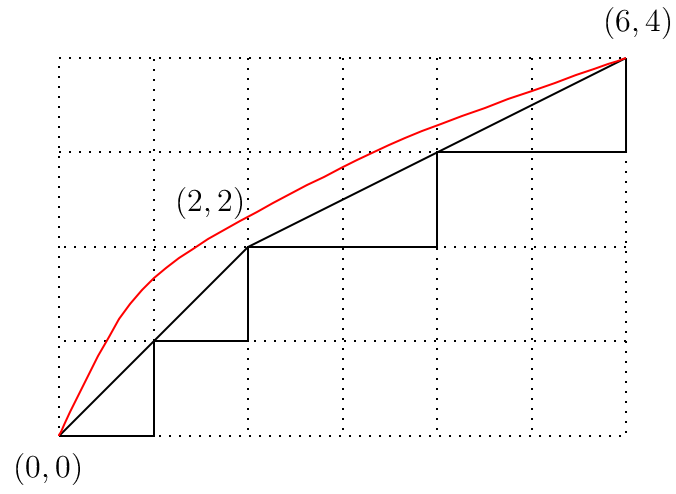}    
 \caption{The (upward) digitally convex word $w = 0101001001$ and its decomposition in two Christoffel words $0101$ and $001001$. In red, one of the possible approximated upward convex curves that join $(0,0)$ and $(6,4)$.}
     \label{fig:digconv}
 \end{figure}

\begin{proposition} \label{prop:convud}
We have:
    \begin{enumerate}
        \item  A word is upward (resp.~downward) digitally convex if and only if its reverse is downward (resp.~upward) digitally convex;
        \item  A word is upward (resp.~downward) digitally convex if and only if its binary complement is downward (resp.~upward) digitally convex.
    \end{enumerate}
\end{proposition}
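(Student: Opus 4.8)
The plan is to prove both equivalences geometrically, by exhibiting, for each of the two operations, a rigid transformation of the plane that realizes it on the level of encoded paths. Both transformations are isometries that fix the integer lattice and are involutions, so they send digital paths to digital paths and preserve the relation ``$\pi$ is the best approximation from below/above of a curve $\gamma$''; it therefore suffices to understand how each one acts on the endpoints, on the bounding rectangle, on the convexity type of the approximated curve, and on the two half-planes ``below'' and ``above''. Throughout I read an upward convex curve as the graph of an increasing concave function $f\colon[0,a]\to[0,b]$ with $f(0)=0$, $f(a)=b$ (so that the region below it is convex, in agreement with the decreasing Christoffel slopes of Example~\ref{ex:dc}), and dually a downward convex curve as the graph of an increasing convex function; a word with Parikh vector $(a,b)$ is upward digitally convex exactly when its path is the closest-from-below lattice path to some such concave $f$, and similarly for the downward case.

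For Part (1), I would use the central symmetry $\rho(x,y)=(a-x,b-y)$ about the centre $(a/2,b/2)$ of the rectangle. A direct check on unit steps shows that applying $\rho$ to the path of $w$ and reading the result starting from $(0,0)$ produces exactly the path of $w^{R}$, since $\rho$ interchanges the endpoints $(0,0)$ and $(a,b)$ and reverses the traversal order. If $\gamma$ is the graph of a concave $f$, then $\rho(\gamma)$ is the graph of $g(x)=b-f(a-x)$, which satisfies $g(0)=0$, $g(a)=b$ and $g''=-f''\ge 0$, hence is convex; moreover $y<f(x)$ is equivalent to $b-y>g(a-x)$, so $\rho$ carries the region below $\gamma$ to the region above $\rho(\gamma)$. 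Thus $w$ is the best approximation from below of a concave curve if and only if $w^{R}$ is the best approximation from above of a convex curve, i.e.\ $w$ is upward digitally convex iff $w^{R}$ is downward digitally convex. The reversed ``resp.'' statement follows because $\rho$ is an involution.

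For Part (2), I would use the reflection $\sigma(x,y)=(y,x)$ across the diagonal $y=x$, which swaps horizontal and vertical unit steps; reading $\sigma$ applied to the path of $w$ from $(0,0)$ (now without reversing the traversal order, since $\sigma$ fixes $(0,0)$) yields the path of the binary complement $\overline{w}$, now with Parikh vector $(b,a)$. Since the approximated curve is the graph of an increasing function $f$, its image $\sigma(\gamma)$ is the graph of the inverse $f^{-1}\colon[0,b]\to[0,a]$, and an increasing concave $f$ has an increasing convex inverse; the endpoints become $(0,0)$ and $(b,a)$. Finally $y<f(x)$ is equivalent, after applying the increasing map $f^{-1}$, to $x>f^{-1}(y)$, i.e.\ to the image point lying above the graph of $f^{-1}$, so again ``below'' and ``above'' are interchanged. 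Hence $w$ is upward digitally convex iff $\overline{w}$ is downward digitally convex, and the converse direction follows from $\sigma$ being an involution.

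The step I expect to require the most care is the bookkeeping of ``below'' versus ``above'' under each map, together with the verification that the discrete notion of \emph{best} approximation is genuinely preserved: one must check that $\rho$ and $\sigma$ restrict to bijections of the monotone lattice paths in the rectangle that send a path closest to $\gamma$ from one side to the path closest to the transformed curve from the other side. This hinges on the fact that both maps are lattice isometries, so monotone lattice paths go to monotone lattice paths and the ``closest staircase'' characterization transfers verbatim; once this is set up, the identification of the convexity type via the substitutions $g(x)=b-f(a-x)$ and $g=f^{-1}$ is routine.
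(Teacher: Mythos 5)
Your proof is correct and takes essentially the same route as the paper, which disposes of the proposition with the single line ``By symmetry''; you have simply made the two symmetries explicit (the central symmetry $(x,y)\mapsto(a-x,b-y)$ realizing reversal and the diagonal reflection $(x,y)\mapsto(y,x)$ realizing complementation) and verified that each is a lattice isometry exchanging the below/above half-regions and the two convexity types. No further comparison is needed.
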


\begin{proof}
    By symmetry.
\end{proof}

Geometrically, fixed $a$ and $b$ positive integers, the upward digitally convex words with Parikh vector $(a,b)$ are all above the Christoffel word $w_{a,b}$ and below the word $1^b0^a$. And of course the downward digitally convex words with Parikh vector $(a,b)$ are all below the Christoffel word $W_{a,b}$ and above the word $0^a1^b$. This will be proved formally in Theorem \ref{thm:dura2}.

Every balanced word is, as we will see, both upward and downward digitally convex. On the contrary, digitally convex words are not necessarily balanced, as  shown by the word $w = 0101001001$ of Example~\ref{ex:dc}. 
We have the following characterization, whose proof will be given later:

\begin{proposition}\label{prop:sym}
    Let $w$ be a binary word, $\widetilde{w}$ its reverse, and $\overline{w}$ its binary complement. The following are equivalent:
\begin{enumerate}
    \item $w$ is balanced;
    \item $w$ and $\widetilde{w}$ are both upward  (resp.~downward) digitally convex;
    \item $w$ and $\overline{w}$ are both upward  (resp.~downward) digitally convex;
    \item $w$ is upward digitally convex and downward digitally convex.
\end{enumerate}
\end{proposition}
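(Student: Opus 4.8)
The plan is to prove the equivalences by reducing everything to the single implication $(4)\Rightarrow(1)$. First I would observe that $(2)$, $(3)$ and $(4)$ are all equivalent by Proposition~\ref{prop:convud}: since $\widetilde{w}$ is upward digitally convex if and only if $w$ is downward digitally convex, condition $(2)$ (both $w$ and $\widetilde{w}$ upward convex) is literally the same as ``$w$ upward and downward convex'', i.e.\ $(4)$; likewise, since $\overline{w}$ is upward convex if and only if $w$ is downward convex, condition $(3)$ coincides with $(4)$ as well (the ``resp.\ downward'' variants being symmetric). The implication $(1)\Rightarrow(4)$ is then immediate: balance is a factorial property, so every factor of a balanced word, in particular every factor occurring in its Lyndon factorization for either order $0<1$ or $1<0$, is balanced, whence $w$ is both upward and downward digitally convex by the Lyndon-factorization characterization of digital convexity of~\cite{DBLP:journals/pr/BrlekLPR09}.

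It remains to prove $(4)\Rightarrow(1)$, for which I would argue by contraposition. Assume $w$ is \emph{not} balanced. By a standard characterization of imbalance (see, e.g.,~\cite{LothaireAlg}), there is a word $p$ (which may be taken to be a palindrome) such that both $0p0$ and $1p1$ are factors of $w$. These two factors have the same length $\ell=|p|+2$, while their numbers of $1$s differ by exactly two: $|0p0|_1=|p|_1$ and $|1p1|_1=|p|_1+2$. Fixing one occurrence of each, and noting that $0p0\neq 1p1$ forces distinct starting positions, one of the two occurrences lies strictly to the left of the other. The goal is to show that the ``flat'' factor $0p0$ lying to the left of the ``steep'' factor $1p1$ is forbidden by upward convexity, whereas $1p1$ lying to the left of $0p0$ is forbidden by downward convexity, so that in either case $(4)$ fails.

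The crux is the following monotonicity property, which I would isolate as a lemma: \emph{if $w$ is upward digitally convex, then for any two equal-length factors $u,v$ whose occurrences start at positions $i\le j$ one has $|v|_1\le |u|_1+1$}; symmetrically, if $w$ is downward digitally convex, then $|u|_1\le|v|_1+1$. Granting this, suppose first that the occurrence of $0p0$ precedes that of $1p1$: applying the upward part with $u=0p0$ and $v=1p1$ gives $|p|_1+2=|1p1|_1\le|0p0|_1+1=|p|_1+1$, a contradiction. If instead $1p1$ precedes $0p0$, the downward part with $u=1p1$ and $v=0p0$ yields the same contradiction. Hence a word satisfying $(4)$ admits no such $p$ and is balanced, completing the cycle.

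The main obstacle is the proof of the lemma, i.e.\ that in an upward convex word the number of $1$s in a sliding window of fixed length can increase by at most one as the window moves to the right. I would derive it from the Lyndon characterization: write $w=C_1\cdots C_k$ with each $C_i$ a balanced lower Christoffel word, the non-increasing order of these Lyndon factors being, via Theorem~\ref{thm:slopes}, exactly non-increasing slopes. Within a single $C_i$ the window count varies by at most one because $C_i$ is balanced, and across a junction $C_iC_{i+1}$ the slope does not increase, so a window crossing from a steeper piece to a shallower one cannot gain two $1$s; the delicate point is to control windows that straddle several pieces, for which I would compare each window count with the value $\ell\cdot b/(a+b)$ dictated by the relevant local slope and use balance of the pieces to keep the two-sided rounding error strictly below the threshold that would permit an increase of two. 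The downward case then follows by applying the upward statement to $\widetilde{w}$ (or $\overline{w}$) through Proposition~\ref{prop:convud}, since reversal and complement preserve lengths and $1$-counts while exchanging ``left'' and ``right''.
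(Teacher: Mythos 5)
Your reductions among (1)--(4) match the paper's: conditions (2)--(4) collapse via Proposition~\ref{prop:convud}, and $(1)\Rightarrow(4)$ is Theorem~\ref{thm:digcon} plus factoriality of balance. The paper, however, disposes of $(4)\Rightarrow(1)$ in one line by invoking Theorem~\ref{thm:maw-dc}: every element of $\MF(\Bal)$ starts with $0$ or with $1$; those starting with $0$ are exactly $\MF(\DC)$, and by the reversal/complement symmetry those starting with $1$ are exactly the minimal forbidden words for downward convexity, so a word that is both upward and downward digitally convex avoids all of $\MF(\Bal)$ and is balanced. Your route instead tries to prove a one-sided balance property (``a later window of a fixed length in an upward digitally convex word gains at most one $1$'') directly from the Lyndon/Christoffel factorization, which is a genuinely different and, in principle, workable idea --- the lemma is in fact true, and the reduction from ``$w$ unbalanced'' to a violation of it (via the positions of the occurrences of $0p0$ and $1p1$) is sound.

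The gap is that the monotonicity lemma is exactly the hard content here, and the argument you sketch for it does not close. A window straddling several Christoffel factors decomposes as a suffix of one factor, some whole factors, and a prefix of another; if all you use is that each partial piece is balanced, its $1$-count is only pinned down to $\lfloor\lambda\rho\rfloor$ or $\lceil\lambda\rho\rceil$, so the window count deviates from the ``ideal'' $\sum\lambda_t\rho_t$ by anything in $(-2,2)$, and comparing two windows whose ideals satisfy $\mathrm{ideal}_2\le\mathrm{ideal}_1$ only yields $|v|_1\le|u|_1+3$ --- far from the needed $+1$, and in particular not enough to contradict $|1p1|_1=|0p0|_1+2$. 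What rescues the argument is a sharper fact you never invoke: a \emph{prefix} of length $\lambda$ of a lower Christoffel word of density $\rho$ has exactly $\lfloor\lambda\rho\rfloor$ ones, while a \emph{suffix} has exactly $\lceil\lambda\rho\rceil$ (and whole factors contribute integers). Then the two rounding errors at the ends of a straddling window have opposite signs, the total deviation lies strictly in $(-1,1)$, and monotonicity of the ideal gives $|v|_1-|u|_1<2$, hence $\le 1$. Without this prefix/suffix exactness (or some equivalent input such as Theorem~\ref{thm:maw-dc} itself, which makes the whole detour unnecessary), your lemma remains unproven and the implication $(4)\Rightarrow(1)$ is not established.
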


So, the words that are simultaneously upward and downward digitally convex are precisely the balanced words. However, there are digitally convex words that lie between the lower and the upper Christoffel word but are not balanced (for instance the word  $0101000$).

The Lyndon factorization can be used to characterize digitally convex words, as it was shown by Brlek et al.~in the following

\begin{theorem}[\cite{DBLP:journals/pr/BrlekLPR09}]\label{thm:digcon}
A word $w$ is upward digitally convex if and only if all the Lyndon words in the Lyndon factorization of $w$ are balanced (hence primitive lower Christoffel words).
\end{theorem}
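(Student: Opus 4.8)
The plan is to pass through an intermediate, purely geometric notion: call a lattice path \emph{convex} if, when it is cut into its maximal straight (digital segment) factors, the slopes of these factors are strictly decreasing from left to right. I would prove the theorem in two stages. First, a geometric equivalence: $w$ is upward digitally convex if and only if the lattice path encoding $w$ is convex in the above sense. Second, a combinatorial translation: the path is convex if and only if every factor of the Lyndon factorization of $w$ is balanced. The first stage is where the geometric definition is actually used; the second stage is where the Christoffel/Lyndon machinery of Section~\ref{sec:prelim} does the work.

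\emph{Stage 1 (geometry).} Recall that ``upward convex'' means the approximated curve is the graph of a concave function, so its slope decreases. For the easy direction, starting from a convex path I would connect the breakpoints between consecutive maximal segments by chords; since the slopes strictly decrease, this polygonal line is concave, and on each chord the path is, by definition of the lower Christoffel word, the closest approximation from below of that chord. Hence the whole path is the best approximation from below of this concave polygon, so $w$ is digitally convex. For the converse I would use an extremality argument: if $\pi$ is the highest monotone lattice path lying weakly below a concave curve $\gamma$, I consider the upper convex hull $H$ of the lattice points lying weakly below $\pi$. By convexity, the edges of $H$ are chords of strictly decreasing slope, and I would show that the portion of $\pi$ under each edge is forced to be the lower Christoffel word of that chord (otherwise $\pi$ could be raised by one cell while staying below $\gamma$, contradicting that it is the best approximation from below). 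This identifies $\pi$ with a concatenation of Christoffel segments of strictly decreasing slope, i.e.\ a convex path. Making this ``raising by one cell'' move precise, and in particular ensuring that the maximal straight factors coincide with the hull edges rather than with the finer unit corners internal to a single Christoffel word, is the step I expect to be the main obstacle.

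\emph{Stage 2 (combinatorics).} Assume first that $w$ is digitally convex, so by Stage~1 the path is convex. Grouping equal consecutive maximal segments, I write $w = w_{\alpha_1,\beta_1}^{\,n_1}\cdots w_{\alpha_m,\beta_m}^{\,n_m}$ with strictly decreasing slopes $\beta_1/\alpha_1 > \cdots > \beta_m/\alpha_m$, where each $w_{\alpha_i,\beta_i}$ is a primitive lower Christoffel word, hence balanced and Lyndon by Theorem~\ref{thm:Lyndon}. By Theorem~\ref{thm:slopes} the strict decrease of slopes gives $w_{\alpha_1,\beta_1} > \cdots > w_{\alpha_m,\beta_m}$ in lexicographic order, so writing out each power as $n_i$ equal factors produces a non-increasing sequence of Lyndon words. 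By the uniqueness in Theorem~\ref{thm:CLF} this sequence is exactly the Lyndon factorization of $w$, and all its factors are balanced, as required.

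\emph{Stage 2, converse.} Suppose every factor of the Lyndon factorization $w=\ell_1\cdots\ell_k$ is balanced; by Theorem~\ref{thm:Lyndon} each $\ell_j$ is a primitive lower Christoffel word. Collecting maximal runs of equal consecutive factors turns each run $\ell^{\,m}$ into one straight segment of slope equal to that of $\ell$. Two adjacent runs use distinct Lyndon factors $u\neq v$ with $u> v$; by Property~\ref{propLyn} the word $uv$ is not Lyndon, and by the uniqueness of the Lyndon factorization such distinct adjacent factors cannot belong to a common Christoffel power, so these runs are genuinely the maximal straight segments. Finally, Theorem~\ref{thm:slopes} converts the strict lexicographic decreases $u>v$ between runs into strictly decreasing slopes. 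Thus the path is a convex path, and by Stage~1 the word $w$ is upward digitally convex. The only delicate point here is the appeal to Property~\ref{propLyn} and to uniqueness to certify that the Lyndon-factorization grouping really recovers the maximal geometric segments and that no two adjacent segments share a slope.
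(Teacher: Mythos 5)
The paper does not prove Theorem~\ref{thm:digcon} at all: it is imported verbatim from Brlek, Lachaud, Proven\c{c}al and Reutenauer, so there is no in-paper argument to measure you against. That said, your two-stage plan (convex-hull geometry first, then translation into the Lyndon factorization via Theorems~\ref{thm:CLF}, \ref{thm:Lyndon} and \ref{thm:slopes}) is essentially the strategy of the cited source, and Stage~2 is correct in both directions given Stage~1: decreasing slopes correspond to decreasing lexicographic order of primitive lower Christoffel words, and uniqueness of the Lyndon factorization does the rest.

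The one real soft spot is your choice of intermediate notion. Both of the ``delicate points'' you flag --- forcing the maximal straight factors to coincide with the hull edges in Stage~1, and certifying in the Stage~2 converse that the grouped Lyndon runs are the maximal straight segments --- are artifacts of defining a convex path through its decomposition into maximal digital straight segments. That decomposition is not canonical (greedy segmentation from the left and from the right can disagree), and a hull edge joining lattice points $(0,0)$ and $(n\alpha,n\beta)$ with $n>1$ is digitized by the power $w_{\alpha,\beta}^{\,n}$, which is not a single primitive segment; so ``maximal straight factor'' and ``hull edge'' genuinely need not match. You can delete both difficulties by taking the intermediate notion to be ``$w$ is a concatenation of primitive lower Christoffel words with non-increasing slopes.'' The hull argument in Stage~1 delivers exactly this: the vertices of the upper hull of the points $(i,s_w[i])$ lie on the path, and each edge's sub-path is the lower Christoffel word of that edge, because $\gamma$ is concave and lies above the hull vertices, hence above every hull chord, so a lattice point strictly between the path and a chord would also lie weakly below $\gamma$ and contradict that the path is the best approximation from below (no ad hoc ``raising by one cell'' is needed). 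Stage~2 consumes exactly this statement and nothing more; the appeal to Property~\ref{propLyn} and to maximal straight segments in your converse is doing no work and should simply be removed. With that adjustment the argument closes.
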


An analogous result characterizes downward digitally convex words. In fact, by a simple symmetry argument, we have that a binary word is downward digitally convex if and only if  all the Lyndon words for the order $1<0$ in the Lyndon factorization of $w$  are balanced (hence primitive upper Christoffel words).

\begin{remark}
The union of upward and downward digitally convex words is a factorial 
language, that is also closed under reversal and binary complement, and includes the language of balanced words.
\end{remark}

Because of the symmetries between upward and downward digitally convex words, from now on we will focus on upward digitally convex words only, which we will simply call digitally convex words. Analogous results hold of course for downward digitally convex words.

\subsection{Minimal Forbidden Words}

Minimal forbidden words are a useful tool for characterizing factorial languages.

\begin{definition}
Let $L$ be a factorial language. A word $w$ is a minimal forbidden word for $L$ if $w$ does not belong to $L$ but all proper factors of $w$ do. 
\end{definition}

Let $\MF(L)$ denote the set of minimal forbidden words of the language $L$. A word $w=xvy$, $x,y$ letters, belongs to $\MF(L)$ if and only if
\begin{enumerate}
 \item $xvy\not\in L$;
 \item $xv,vy\in L$.
\end{enumerate}

A special case is when $L$ is the set of factors of a single word $w$. In this case we talk of minimal forbidden words of the word $w$.

 \begin{example}
 Let $w=01001$. The minimal forbidden words (MFWs) of  $w$ are:
 $$\MF(w)=\{000, 0010, 101, 11\}.$$
 \end{example}

\begin{theorem}[\cite{DBLP:conf/birthday/MignosiRS99}]
 There is a bijection between factorial languages and their sets of minimal forbidden words.
\end{theorem}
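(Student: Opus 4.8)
The plan is to exhibit an explicit two-sided inverse for the map $L \mapsto \MF(L)$, thereby proving it is a bijection, and to identify its image with the \emph{anti-factorial} sets, i.e.\ sets of words in which no element is a proper factor of another. Given any set $M$ of words over $\Sigma$, define the language avoiding $M$ as factors, $L_M = \{w \in \Sigma^* : \text{no factor of } w \text{ lies in } M\}$. This $L_M$ is automatically factorial, since any factor of a word that avoids $M$ also avoids $M$. The goal is then to show that $L \mapsto \MF(L)$ and $M \mapsto L_M$ are mutually inverse.

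First I would record that $\MF(L)$ is always anti-factorial: if $u, w \in \MF(L)$ with $u$ a proper factor of $w$, then on one hand $u \notin L$ because $u \in \MF(L)$, while on the other hand $u$, being a proper factor of $w \in \MF(L)$, must lie in $L$, a contradiction. Thus the image of the map is contained in the anti-factorial sets.

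The heart of the argument is the reconstruction identity $L_{\MF(L)} = L$ for every factorial $L$, which amounts to the equivalence: $w \in L$ if and only if $w$ has no factor in $\MF(L)$. The forward direction is immediate from factoriality, since every factor of $w \in L$ lies in $L$ and hence none lies in $\MF(L)$. For the converse, suppose $w \notin L$ and take a shortest factor $u$ of $w$ with $u \notin L$, which exists because $w$ itself qualifies. Every proper factor of $u$ is a strictly shorter factor of $w$, hence lies in $L$ by minimality of $u$; therefore $u \in \MF(L)$, so $w$ does contain a factor in $\MF(L)$. This already proves injectivity of $L \mapsto \MF(L)$.

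It remains to verify $\MF(L_M) = M$ whenever $M$ is anti-factorial, which establishes surjectivity onto the anti-factorial sets. For $M \subseteq \MF(L_M)$: any $w \in M$ satisfies $w \notin L_M$, and if some proper factor $v$ of $w$ were not in $L_M$ it would contain a factor $m \in M$, which would then be a proper factor of $w$ lying in $M$, contradicting anti-factoriality; hence all proper factors of $w$ lie in $L_M$ and $w \in \MF(L_M)$. For the reverse inclusion, if $w \in \MF(L_M)$ then $w \notin L_M$, so $w$ contains some $m \in M$ as a factor; were $m$ a proper factor of $w$ then $m \in L_M$ because $w \in \MF(L_M)$, contradicting $m \in M$ (which forces $m \notin L_M$), so $m = w \in M$. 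Combining these identities shows the two maps are inverse bijections between factorial languages and anti-factorial sets. I expect no serious obstacle here; the only care needed is in consistently distinguishing proper factors from factors and in handling the edge cases of the empty word and of letters that do not occur in $L$.
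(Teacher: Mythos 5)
Your proof is correct and complete. Note that the paper does not prove this statement at all---it is quoted as a known result from the cited reference of Mignosi, Restivo and Sciortino, so there is no in-paper argument to compare against; your reconstruction (showing $\MF(L)$ is anti-factorial, that $L\mapsto\MF(L)$ and $M\mapsto L_M$ are mutually inverse via the identities $L_{\MF(L)}=L$ and $\MF(L_M)=M$) is the standard argument from that literature and in fact establishes slightly more than the stated theorem, namely that the image of the map is exactly the collection of anti-factorial sets.
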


As a consequence, $\MF(L)$ uniquely determines $L$.

Let $\Bal$ be the (factorial) language of balanced words over $\{0,1\}$. The next theorem gives a characterization of the minimal forbidden words for the language of binary balanced words.

\begin{theorem}[\cite{DBLP:journals/jcss/Fici14}]
 $\MF(\Bal)=\{yvx \mid \{x,y\}=\{0,1\}$, $xvy$ is a non-primitive Christoffel~word$\}$.
\end{theorem}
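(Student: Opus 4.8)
The plan is to characterize the minimal forbidden words of $\Bal$ by combining the known structure of balanced words with the characterization of Christoffel words via Theorem~\ref{thm:Lyndon}. Let me think about what a minimal forbidden word for $\Bal$ looks like: it is a word $m=xvy$ (with $x,y$ single letters) such that $m\notin\Bal$ but both $xv$ and $vy$ lie in $\Bal$. Since $\Bal$ is factorial, I must then show each such $m$ coincides with some $yvx$ where $xvy$ is a non-primitive Christoffel word, and conversely.

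First I would establish the forward direction. If $m=xvy$ is a minimal forbidden word, then since every proper factor of $m$ is balanced but $m$ itself is not, the unbalance must be ``tight'': there should exist two factors of $m$ of the same length whose numbers of $0$s differ by exactly $2$, and by minimality these two factors must be a prefix and a suffix of $m$ of equal length, spanning essentially all of $m$. I would unpack this into the statement that $m$ contains two factors $0u0$ and $1u1$ for some common central-type word $u$ — this is the classical ``imbalance witness'' phenomenon, where the obstruction to balance is a pair $0u0$, $1u1$ both occurring (the hallmark of non-balanced words). The key technical point is that the first and last letters of $m$ must differ (i.e.\ $m=yvx$ with $\{x,y\}=\{0,1\}$), and that $v$ itself carries palindromic/central structure forcing $xvy$ to be Christoffel.

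Next I would run the converse. Suppose $xvy$ is a non-primitive Christoffel word, say $xvy=(w_{\alpha,\beta})^n$ with $n>1$ (using the fact from the preliminaries that a Christoffel word is non-primitive exactly when its endpoints are non-coprime, and it is then a proper power of a primitive Christoffel word). I want to show $m=yvx$ is a minimal forbidden word for $\Bal$: that $yvx\notin\Bal$ while all proper factors are balanced. The non-balance of $yvx$ should come from comparing it against $xvy$: since $xvy$ is balanced (it is a Christoffel word, hence balanced by Theorem~\ref{thm:Lyndon}, as even powers of balanced Lyndon words remain balanced), swapping the two end letters to form $yvx$ creates a factor pair violating balance precisely because of the non-primitivity (the period structure forces $v$ to have matching central behavior at both ends). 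That all proper factors of $yvx$ are balanced should follow because proper factors of $yvx$ are factors of either $yv$ or $vx$, and these are balanced by the factoriality of $\Bal$ together with the Christoffel structure.

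The main obstacle I expect is pinning down exactly why the first and last letters of a minimal forbidden word must be distinct, and why the interior $v$ forces $xvy$ to be a (non-primitive) Christoffel word rather than merely balanced. This is the heart of the equivalence and should rely on Theorem~\ref{thm:Lyndon} (balanced and unbordered characterizes primitive Christoffel words) together with the central-word structure from Theorem~\ref{thm:CSCW}: I would argue that the minimal obstruction to balance is an \emph{unbordered} balanced word with its end letters swapped, and that $xv$, $vy$ being balanced while $xvy$ is not pins $xvy$ to be balanced with period structure matching a non-primitive Christoffel word. Getting the ``non-primitive'' qualifier exactly right — as opposed to allowing primitive Christoffel words — is the subtle part, and I would verify it by checking that for a \emph{primitive} Christoffel word $xvy=0C1$, the swapped word $yvx=1C0$ is in fact a \emph{balanced} conjugate (by Proposition~\ref{prop:paldec}), hence not forbidden, so only the non-primitive case yields genuine minimal forbidden words.
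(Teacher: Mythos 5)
The paper does not prove this theorem; it is imported from the cited reference [Fici14], so there is no in-paper argument to compare against. Judged on its own, your proposal is a reasonable high-level roadmap: the $0u0$/$1u1$ unbalance witness, the observation that minimality forces one witness to be a prefix and the other a suffix (hence the end letters of a minimal forbidden word differ), and the sanity check that primitive Christoffel words cannot contribute because $1C0$ is the balanced upper-Christoffel conjugate of $0C1$ are all correct ingredients. But both halves of the equivalence stop exactly where the real work begins.

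In the forward direction you acknowledge that the crux is showing that the interior of a minimal forbidden word $yvx$ forces $xvy$ to be a non-primitive Christoffel word, but you give no mechanism for this. From $yvx\in\MF(\Bal)$ you only get $yv,vx\in\Bal$; you would still have to prove that $xv$ and $vy$ are balanced (a prerequisite for $xvy$ being a Christoffel word, and not a consequence of factoriality, since $xv$ is not a factor of $yvx$), and then that the period structure of $v$ is exactly $C(yxC)^{n-1}$ for a central word $C$ and $n\ge 2$; this is essentially the classification of bispecial Sturmian words, which is the actual content of the cited paper. In the converse direction, writing $xvy=(xCy)^n$ gives $yvx=yC(yxC)^{n-1}x$, which has $yCy$ as a prefix and $xCx$ as a suffix, so non-balance does follow since $C$ is a palindrome. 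However, your justification that all proper factors of $yvx$ are balanced (``by factoriality together with the Christoffel structure'') fails: the maximal proper prefix $yv=yC(yxC)^{n-1}$ contains $yCy$, whose Parikh vector differs from that of every conjugate of $xCy$, so $yv$ is \emph{not} a factor of the balanced word $xvy$ or of any of its powers, and its balance requires a separate argument. Until these two steps are supplied, the proposal is a plan rather than a proof.
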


\begin{example}
The word  $000101$ is not balanced, but all its proper factors are. Indeed, $100100$ is the square of the primitive upper Christoffel word $100$.

\end{example}

\begin{corollary}[\cite{DBLP:journals/jcss/Fici14}]\label{cor:Fici}
 For every $n>0$, there are exactly $n-\phi(n)-1$ words of length $n$ in  $\MF(\Bal)$ that start with $0$, and they are all Lyndon words. Here $\phi$ is the Euler totient function.
\end{corollary}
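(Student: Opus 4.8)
The plan is to separate the statement into a counting part and a structural (Lyndon) part. For the count, I would first note that a minimal forbidden word of $\Bal$ beginning with $0$ must have the shape $0v1$: by the characterization $\MF(\Bal)=\{yvx \mid \{x,y\}=\{0,1\},\ xvy \text{ non-primitive Christoffel}\}$, a word starting with $0$ has $y=0$, hence $x=1$. Thus $xvy=1v0$ is a non-primitive Christoffel word beginning with $1$ and ending with $0$, i.e.\ a non-primitive \emph{upper} Christoffel word $W_{a,b}$ with $a,b\ge 1$ (the degenerate words $0^n,1^n$ are excluded, since their first and last letters coincide). The assignment $0v1\mapsto 1v0=W_{a,b}$ is then a bijection between the length-$n$ minimal forbidden words starting with $0$ and the non-primitive upper Christoffel words of length $n$ in which both letters occur. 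These in turn correspond to the pairs $(a,b)$ with $a+b=n$, $a,b\ge 1$ and $\gcd(a,b)>1$. Since $\gcd(a,b)=\gcd(a,n)$, exactly $\phi(n)$ of the $n-1$ such pairs are coprime, leaving $(n-1)-\phi(n)=n-\phi(n)-1$ of them (the case $n=1$ is degenerate, there being no non-degenerate Christoffel word of length $1$).

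For the Lyndon part, I would set $d=\gcd(a,b)\ge 2$ and write $W_{a,b}=(W_{\alpha,\beta})^d=(1C0)^d$, where $\alpha=a/d$, $\beta=b/d$ are coprime, $C$ is the (palindromic) central word, and $0C1=w_{\alpha,\beta}$ is the primitive lower Christoffel word, which is Lyndon; here $W_{\alpha,\beta}=\widetilde{w_{\alpha,\beta}}=1C0$ because $C$ is a palindrome. Stripping the first and last letters of $(1C0)^d$ gives $v=C0(1C0)^{d-2}1C$, so the forbidden word is
\[
u=0v1=0\,C0(1C0)^{d-2}1C\,1=0(C01)^{d-1}C1.
\]
To prove $u$ is Lyndon it suffices to show that $u$ is strictly smaller than each of its proper suffixes, as this forces primitivity as well. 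Suffixes beginning with $1$ already exceed $u$, so the whole problem reduces to a proper suffix $s=u[k+1..n]$ with $u[k+1]=0$.

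The structural observation driving the comparison is that $u$ agrees with the purely periodic word $(1C0)^d$ (of period $|C|+2$) everywhere except at its first letter, changed from $1$ to $0$, and its last letter, changed from $0$ to $1$. Comparing $u$ with such a suffix $s$ therefore reduces, on the common interior, to comparing two shifts of the right-infinite periodic word $(1C0)^\omega$ of period $1C0$, corrected by these two boundary flips; and the order of those shifts is governed by the Lyndon property of $0C1$, which I would use to show that at the first interior position where $u$ and $s$ differ it is $u$ that carries the smaller letter $0$. I expect the crux to be the border case, in which $s$ minus its last letter is a prefix of $u$ (no interior mismatch occurs): there the conclusion $u<s$ must be extracted precisely from the flip of the final letter of $u$ from $0$ to $1$, and one must separately rule out that a proper suffix starting with $0$ is an honest border of $u$. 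The delicate interaction between the periodic shift comparison and the two boundary flips is the hard part. A cleaner alternative worth attempting is to produce, from the decomposition $C=P01Q=Q10P$ of Theorem~\ref{thm:CSCW}, an explicit factorization $u=pq$ into two Lyndon (indeed Christoffel) words with $p<q$ and conclude by Property~\ref{propLyn}; the obstacle in that route is establishing Lyndon-ness of the two factors uniformly in $d$ and in the shape of $C$.
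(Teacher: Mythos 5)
The paper offers no proof of this corollary at all---it is imported verbatim from the cited reference---so there is no in-paper argument to compare against; your proposal has to stand on its own. Its counting half does: identifying the minimal forbidden words of length $n$ starting with $0$ with the non-primitive upper Christoffel words $W_{a,b}$, $a,b\ge 1$, $a+b=n$, $\gcd(a,b)>1$, and using $\gcd(a,b)=\gcd(a,n)$ to get $(n-1)-\phi(n)$, is correct and complete (and you are right that $n=1$ is a degenerate case of the statement itself).

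The Lyndon half, however, is a genuine gap: after reducing to showing that $u=0(C01)^{d-1}C1$ is Lyndon, you only describe a comparison scheme (shifts of $(1C0)^\omega$ corrected by two boundary flips) and explicitly leave its crux---the border case and the ``delicate interaction''---unresolved, so nothing is actually proved. The irony is that your ``cleaner alternative'' closes immediately using material already in the paper, and the obstacle you name there is not one. By Theorem~\ref{thm:maw-Prov} (see the proof of Theorem~\ref{thm:maw-dc}), your word is exactly $p(pq)^{k}q$ with $k=d-1\ge 1$, where $pq$ is the \emph{standard factorization} of the primitive lower Christoffel word $0C1$ (namely $0Q1\cdot 0P1$, or $0\cdot 0^m1$, resp.\ $01^m\cdot 1$, when $C$ is a power of a letter); both factors are primitive Christoffel words, hence Lyndon, and $p<pq<q$ since every proper suffix of a Lyndon word exceeds it. Now Property~\ref{propLyn} finishes the job by induction from the right: $pq<q$ gives $(pq)q$ Lyndon; since $pq$ is a proper prefix of $(pq)^{j}q$, each $(pq)^{j+1}q$ is Lyndon; and since $p$ is a proper prefix of $(pq)^{k}q$ (here $k\ge1$ is used), $p(pq)^{k}q$ is Lyndon. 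Uniformity in $d$ and in the shape of $C$ is automatic. I would replace your unfinished periodic-shift argument with this three-line induction.
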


In 2011, Proven\c{c}al~\cite{DBLP:journals/tcs/Provencal11} studied the minimal forbidden words of the set $\DC$ of digitally convex words.
In fact, the set of digitally convex words is a factorial language. In his paper, Proven\c{c}al  attributes this result to a private communication of C.~Reutenauer, but here we provide a proof for the sake of completeness.

\begin{proposition}\label{prop:digconvfact}
 Every factor of a digitally convex word is digitally convex.
\end{proposition}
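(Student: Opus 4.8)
The plan is to use the combinatorial characterization of digitally convex words given by Theorem~\ref{thm:digcon}: a word $w$ is digitally convex if and only if every Lyndon word in its Lyndon factorization is balanced (equivalently, a primitive lower Christoffel word). Since factoriality of a language is equivalent to being closed under taking factors, it suffices to show that if $w$ is digitally convex then so is every factor of $w$. Moreover, because any factor is a prefix of a suffix, it is enough to handle two cases separately: removing a single letter from the left end (passing to a suffix by one letter) and removing a single letter from the right end (passing to a prefix by one letter). Iterating these two operations yields every factor, so I would reduce the whole statement to these two elementary steps.

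First I would treat deletion from the right. Write the Lyndon factorization of $w$ as $w = \ell_1\ell_2\cdots\ell_k$ with $\ell_1\geq\ell_2\geq\cdots\geq\ell_k$, each $\ell_i$ a balanced Lyndon word. Removing the last letter only affects the final factor $\ell_k$. The key fact I would invoke is that balance is a factorial property (stated in the excerpt): every factor of a balanced word is balanced, so in particular every prefix of the balanced word $\ell_k$ is balanced. The prefix $\ell_k'$ obtained by deleting the last letter of $\ell_k$ is therefore balanced; its own Lyndon factorization consists of balanced Lyndon words (every factor of a balanced word is balanced, so each Lyndon factor is balanced), and concatenating $\ell_1\cdots\ell_{k-1}$ with this factorization and re-sorting via Property~\ref{propLyn} still yields a non-increasing factorization into balanced Lyndon words. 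Hence the prefix $w[1..|w|-1]$ is digitally convex by Theorem~\ref{thm:digcon}.

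Symmetrically, deletion from the left affects only $\ell_1$. Deleting the first letter of $w$ replaces $\ell_1$ by its suffix $\ell_1'' = \ell_1[2..|\ell_1|]$. Again by the factorial property of balance, $\ell_1''$ is balanced, so its Lyndon factorization into balanced Lyndon words, prepended to $\ell_2\cdots\ell_k$ and re-sorted, gives a non-increasing factorization of $w[2..|w|]$ into balanced Lyndon words. Thus the suffix is digitally convex. Combining the two cases and iterating, every factor of $w$ is digitally convex, which establishes that $\DC$ is factorial.

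The main obstacle I anticipate is the bookkeeping around the Lyndon factorization after deleting a letter: removing a letter from $\ell_1$ or $\ell_k$ may break it into several Lyndon pieces, and the resulting list of Lyndon words need not be in non-increasing order when concatenated with the untouched factors, so one must argue that re-applying the Chen--Fox--Lyndon process (via Property~\ref{propLyn}) preserves the balanced property of every factor. The clean way around this is to observe that I never actually need to compute the new factorization explicitly: it suffices that the modified word is a concatenation of balanced Lyndon words (possibly out of order), and then Theorem~\ref{thm:CLF} guarantees a unique non-increasing Lyndon factorization whose factors, being factors of balanced words hence balanced, are all balanced. So the real content reduces entirely to the factoriality of $\Bal$, which is already available, and the characterization in Theorem~\ref{thm:digcon}.
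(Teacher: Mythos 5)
Your overall strategy---reduce to deleting a single letter at either end of $w$ and apply Theorem~\ref{thm:digcon} together with the factoriality of balance---is essentially the paper's, but the way you discharge the difficulty you yourself flag is wrong. In your final paragraph you claim it suffices that the modified word be \emph{some} concatenation of balanced Lyndon words, possibly out of order, because the factors of its unique non-increasing Lyndon factorization would then be ``factors of balanced words hence balanced.'' This is false: when the Chen--Fox--Lyndon process merges two consecutive factors $u<v$ into the Lyndon word $uv$ via Property~\ref{propLyn}, the merged factor straddles the two pieces, is not a factor of either, and need not be balanced. For instance, $0$ and $011$ are balanced Lyndon words (primitive lower Christoffel words), but $0\cdot 011=0011$ is itself a Lyndon word containing both $00$ and $11$, so its Lyndon factorization is the single unbalanced factor $0011$ and it is not digitally convex. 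Hence ``concatenation of balanced Lyndon words'' does not imply digital convexity, and your argument has a genuine gap precisely at the step you identified as the main obstacle.

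The gap is repairable because in your two cases no merging actually occurs, but this must be proved. In the prefix case, write the Lyndon factorization of $\ell_k'$ as $m_1\geq\cdots\geq m_j$; each $m_1$ is a proper prefix of the Lyndon word $\ell_k$, so $m_1<\ell_k\leq\ell_{k-1}$, and the list $\ell_1,\ldots,\ell_{k-1},m_1,\ldots,m_j$ is already non-increasing, hence by Theorem~\ref{thm:CLF} it \emph{is} the Lyndon factorization and all its terms are balanced. Dually, in the suffix case the last Lyndon factor of $\ell_1''$ is a proper suffix of the Lyndon word $\ell_1$, hence lexicographically greater than $\ell_1\geq\ell_2$, so again no merging happens. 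The paper avoids the issue altogether by arguing in the opposite direction: it compares the Lyndon factorization $\ell_1\cdots\ell_k$ of the \emph{shorter} word $w$ with that of $wx$, observing that appending a letter can only merge a tail into a single Lyndon factor $\overline{\ell}=\ell_{i+1}\cdots\ell_k x$; if $wx$ is digitally convex then $\overline{\ell}$ is balanced, and $\ell_{i+1},\ldots,\ell_k$, being genuine factors of $\overline{\ell}$, are balanced by factoriality. Passing from the longer word to the shorter one only ever splits factors, which is exactly where factoriality of balance applies cleanly.
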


\begin{proof}
 It is sufficient to prove that if $wx$ (resp.~$xw$) is digitally convex, $x\in\{0,1\}$, then so is $w$.
 Indeed, let $\ell_1\cdots\ell_k$ be the Lyndon factorization of $w$. Then, using Property~\ref{propLyn}, the Lyndon factorization of $wx$ is either $\ell_1\cdots\ell_k\cdot x$, if $\ell_k\geq x$, or $\ell_1\cdots\ell_i\overline{\ell}$ for some $i$, where $\overline{\ell}=\ell_{i+1}\cdots \ell_kx$,  otherwise. In the former case, $w$ is clearly digitally convex. In the latter case, by hypothesis, the Lyndon word $\overline{\ell}$ is balanced, therefore so are $\ell_{i+1},\ldots,\ell_k$. Hence $\ell_{i+1},\ldots,\ell_k$ are balanced Lyndon words, i.e., primitive lower Christoffel words, whence $w$ is  digitally convex. 
 
 The case $xw$ is analogous.
\qed \end{proof}

As a consequence, a word is digitally convex if and only if all its Lyndon factors are balanced. 

\begin{theorem}[\cite{DBLP:journals/tcs/Provencal11}]
\label{thm:maw-Prov}
$\MF(\DC)=\{u(uv)^kv \mid k\geq 1, uv$ is the standard factorization of a primitive lower Christoffel word$\}$.
\end{theorem}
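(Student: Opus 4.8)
The plan is to prove the two inclusions separately, using throughout that $\DC$ is factorial (Proposition~\ref{prop:digconvfact}) and the characterization of Theorem~\ref{thm:digcon}: a word lies in $\DC$ iff every factor in its Lyndon factorization is balanced. I first record a reduction used in both directions. Since every proper factor of a word $m=m[1..n]$ is a factor of $m[2..n]$ or of $m[1..n-1]$, factoriality of $\DC$ yields that $m\in\MF(\DC)$ iff $m\notin\DC$ and both $m[2..n]$ and $m[1..n-1]$ lie in $\DC$. Recall also that every balanced word is digitally convex, so $\Bal\subseteq\DC$.

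For the inclusion $\supseteq$, fix the standard factorization $uv$ of a primitive lower Christoffel word $C=uv$ and set $m=u(uv)^kv=uC^kv$ with $k\ge1$. First, $m$ is a Lyndon word: since $C=uv$ is Lyndon and $v$ is a proper suffix, $C<v$, so by Property~\ref{propLyn} one gets inductively that $C^{j}v$ is Lyndon for all $j\ge0$ (because $C<C^{j}v$), and then $u<C^{k}v$ (as $u$ is a proper prefix of $C^kv$) gives that $u\cdot C^{k}v=m$ is Lyndon. Next I show $m\in\MF(\Bal)$. Write $C=0D1$ with $D$ central and, in the generic case, $u=0Q1$, $v=0P1$ with $D=Q10P=P01Q$ (Theorem~\ref{thm:CSCW}). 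The central-word identity gives the commutation relation $(Q1)(0D1)=(D01)(Q1)$, whence $(Q1)(0D1)^k=(D01)^k(Q1)$ and
\[
m=0\,(Q1)(0D1)^k(0P)\,1=0\,(D01)^kQ10P\,1=0\,(D01)^kD\,1=0\,D(01D)^k\,1.
\]
Flipping the two end letters of $m$ thus produces $1\,D(01D)^k\,0=(1D0)^{k+1}=\widetilde{C}^{\,k+1}$, where $\widetilde{C}=1D0$ is the reversal of $C$, i.e.\ the corresponding upper Christoffel word; this is a non-primitive upper Christoffel word (the degenerate case $D=0^n$ is checked directly, giving $(10^{a})^{k+1}$). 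By Fici's characterization of $\MF(\Bal)$~\cite{DBLP:journals/jcss/Fici14}, it follows that $m\in\MF(\Bal)$. In particular $m$ is not balanced, so, its Lyndon factorization being $m$ itself, Theorem~\ref{thm:digcon} gives $m\notin\DC$; and all proper factors of $m$ are balanced, hence in $\DC$. Therefore $m\in\MF(\DC)$.

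For $\subseteq$, let $m\in\MF(\DC)$. If its Lyndon factorization were $\ell_1\cdots\ell_n$ with $n\ge2$, then the proper prefix $\ell_1\cdots\ell_{n-1}$ and the proper suffix $\ell_2\cdots\ell_n$ (whose Lyndon factorizations are exactly these subproducts, being non-increasing) lie in $\DC$, forcing every $\ell_i$ to be balanced and hence $m\in\DC$, a contradiction; so $m$ is a single Lyndon word, necessarily not balanced. As $|m|\ge2$, it starts with $0$ and ends with $1$; let $m=uv$ be its standard factorization. Both $u$ and $v$ are proper factors of $m$, hence Lyndon words in $\DC$, i.e.\ balanced Lyndon words, i.e.\ primitive lower Christoffel words, with $u<v$. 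It then remains to reconstruct the nested form: since the target $u_0(u_0v_0)^kv_0$ has standard factorization $u_0\cdot(u_0v_0)^kv_0$, the goal is to show that $u=u_0$ and $v=(u v_0)^k v_0$ for a primitive Christoffel $v_0$ with $uv_0$ again Christoffel — equivalently, that the end-flip $\hat m$ of $m$ is a non-primitive Christoffel word, i.e.\ $m\in\MF(\Bal)$. Granting $m\in\MF(\Bal)$, Fici's characterization gives $\hat m=\widetilde{C}^{\,k+1}$ for a primitive upper Christoffel word $\widetilde{C}$, and inverting the computation of the forward direction recovers $m=u(uv)^kv$ with $uv=C$ a standard factorization.

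The genuinely delicate step — and the main obstacle — is precisely the converse claim that every proper factor of $m$ is balanced (equivalently $m\in\MF(\Bal)$), which does \emph{not} follow from $\DC$-membership alone, since $\DC\setminus\Bal\neq\emptyset$. I plan to establish it through the shortest non-balanced factor $h$ of $m$: such an $h$ is a minimal forbidden word of $\Bal$, hence, by Fici's theorem, associated with a single primitive Christoffel word $C_1=0D_11$, so that $m$ contains both patterns $0D_10$ and $1D_11$. I then expect to argue that the Lyndon property of $m$, together with its minimality in $\MF(\DC)$, forces these occurrences to span all of $m$ — so that $h=m$ and $m\in\MF(\Bal)$ — via a careful control of where $0D_10$ and $1D_11$ can occur inside a Lyndon word under the lexicographic and conjugacy constraints imposed by minimality. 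As in the factorization lists preceding this section, the cases where the relevant central word is a power of a single letter will be treated separately.
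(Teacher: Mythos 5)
A point of context first: the paper does not prove this theorem at all --- it is imported verbatim from Proven\c{c}al's paper \cite{DBLP:journals/tcs/Provencal11}. The only argument in the paper touching its content is the proof of Theorem~\ref{thm:maw-dc}, which \emph{assumes} Theorem~\ref{thm:maw-Prov} and merely performs the algebraic translation $u(uv)^kv=0w1$ with $1w0=(1Q10P0)^{k+1}$. Your $\supseteq$ direction is correct, complete, and in fact goes beyond what the paper records: the Lyndon-ness of $m=u(uv)^kv$ via Property~\ref{propLyn}, the commutation $(Q1)(0D1)=(D01)(Q1)$ giving $m=0D(01D)^k1$, and the appeal to Fici's description of $\MF(\Bal)$ together yield a genuine proof that these words lie in $\MF(\DC)$. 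Your reduction of the $\subseteq$ direction --- $m$ must be a single unbalanced Lyndon word, and the conclusion follows once one knows $m\in\MF(\Bal)$ --- is also sound.

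The gap is the one you yourself flag: the claim that every proper factor of $m\in\MF(\DC)$ is balanced is asserted only as a plan (``I plan to establish it'', ``I then expect to argue''), and this is precisely the hard content of Proven\c{c}al's theorem, not a routine verification. Concretely, minimality gives only that proper factors of $m$ lie in $\DC$, and $\DC\setminus\Bal\neq\emptyset$. Passing to the shortest unbalanced factor $h$ of $m$ (an element of $\MF(\Bal)$) does close the case where $h$ starts with $0$: then $h$ is of the form $u(uv)^kv$ by your forward computation, hence $h\in\MF(\DC)$, and since no element of $\MF(\DC)$ can be a proper factor of another, $h=m$. But when $h$ starts with $1$, i.e.\ $h=1v0$ with $0v1$ a non-primitive lower Christoffel word, $h$ is typically \emph{in} $\DC$ (e.g.\ $1100$ and $110100$ are unbalanced yet digitally convex), so nothing established so far prevents such an $h$ from sitting as a proper factor inside $m$. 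Ruling this out requires an actual argument about where the patterns $0D_10$ and $1D_11$ can occur inside an unbalanced Lyndon word all of whose proper factors are digitally convex; your sketch of ``forcing these occurrences to span all of $m$'' names the right target but supplies no mechanism. As it stands the $\subseteq$ inclusion is unproven.
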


We now give an alternative description:

\begin{theorem}
\label{thm:maw-dc}
$\MF(\DC)$  is the set of words in  $\MF(\Bal)$ that start with $0$. Hence,
$\MF(\DC)=\{0w1 \mid \mbox{$1w0$ is a non-primitive Christoffel word}\}$. In particular, all words in $\MF(\DC)$ are Lyndon words.
\end{theorem}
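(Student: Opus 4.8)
The plan is to prove the theorem by identifying two explicit descriptions: the one for $\MF(\DC)$ given in Theorem~\ref{thm:maw-Prov} and the one for $\MF(\Bal)$ given by Fici's characterization, restricted to words beginning with $0$. A word $yvx\in\MF(\Bal)$ (with $\{x,y\}=\{0,1\}$ and $xvy$ a non-primitive Christoffel word) begins with $0$ exactly when $y=0$, hence $x=1$; so the words of $\MF(\Bal)$ starting with $0$ are precisely the $0v1$ with $1v0$ a non-primitive Christoffel word, and by Corollary~\ref{cor:Fici} they are all Lyndon. Thus the entire statement reduces to showing
\[
\MF(\DC)=\{0v1 \mid 1v0\text{ is a non-primitive Christoffel word}\},
\]
which I would establish by matching the two parametrizations through a single word identity.

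First I would fix a primitive lower Christoffel word $0C1$ of length at least $2$ and use its two factorizations from the Preliminaries: the standard factorization $0C1=\alpha\beta$ and the palindromic factorization $0C1=\gamma\delta$. Writing $C=Q10P=P01Q$ as in Theorem~\ref{thm:CSCW}, so that $\alpha=0Q1$, $\beta=0P1$, $\gamma=0P0$, $\delta=1Q1$, one reads off the block rewritings
\[
0C0=\alpha\gamma,\qquad 1C0=\delta\gamma,\qquad 1C1=\delta\beta,\qquad \alpha\beta=\gamma\delta .
\]
Taking $uv=\alpha\beta=0C1$ as the standard factorization, Provençal's generic element then telescopes:
\[
u(uv)^{k}v=\alpha(\alpha\beta)^{k}\beta=\alpha(\gamma\delta)^{k}\beta=\alpha\gamma(\delta\gamma)^{k-1}\delta\beta=0C0\,(1C0)^{k-1}\,1C1 .
\]
The only real input is $\alpha\beta=\gamma\delta$ together with the three rewritings; the rest is the identity $(\gamma\delta)^{k}=\gamma(\delta\gamma)^{k-1}\delta$.

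Next I would interpret the right-hand side. For $n\ge 2$, deleting the leading $1$ and trailing $0$ of $(1C0)^{n}$ leaves $C0(1C0)^{n-2}1C$, so that $(1C0)^{n}=1v0$ with $0v1=0C0(1C0)^{n-2}1C1$. Since $C$ is a palindrome, $1C0$ is the reversal of $0C1$, i.e.\ the primitive upper Christoffel word $W_{a,b}$; hence with $n=k+1\ge 2$ the word $1v0=(1C0)^{k+1}$ is a non-primitive Christoffel word, giving $\MF(\DC)\subseteq\{0v1\mid 1v0\text{ non-primitive Christoffel}\}$. For the reverse inclusion, a non-primitive Christoffel word beginning with $1$ must end with $0$ and be a proper power $(1C0)^{n}$ of a primitive upper Christoffel word $1C0$ (the letter powers $0^{m},1^{m}$ are excluded by the first and last letters); taking $k=n-1\ge 1$ together with the primitive lower Christoffel word $0C1$ and its standard factorization, the same identity exhibits $0v1=u(uv)^{k}v$ as a Provençal element of $\MF(\DC)$. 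The Lyndon claim is then inherited from Corollary~\ref{cor:Fici}.

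The hard part will be the central identity, and inside it the degenerate cases $C=0^{n}$ and $C=1^{n}$, where $P,Q$ and the factors $\alpha,\beta,\gamma,\delta$ lose their generic shape. There I would verify the four displayed relations by hand: for $C=0^{n}$ one has $\alpha=0$, $\beta=0^{n}1$, $\gamma=0^{n+1}$, $\delta=1$, and symmetrically for $C=1^{n}$, and in each case $0C0=\alpha\gamma$, $1C0=\delta\gamma$, $1C1=\delta\beta$ and $\alpha\beta=\gamma\delta=0C1$ still hold, so the telescoping is unchanged. The remaining work is bookkeeping: confirming that the exponent shift $n=k+1$ pairs $k\ge 1$ with non-primitivity $n\ge 2$, and that the reversal $0C1\leftrightarrow 1C0$ sets up a genuine bijection between the two parametrizations.
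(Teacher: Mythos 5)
Your proof is correct and follows essentially the same route as the paper's: both convert Proven\c{c}al's parametrization $u(uv)^kv$ into $0w1$ with $1w0=(1C0)^{k+1}$ by trading the standard factorization $0Q1\cdot 0P1$ for the palindromic one $0P0\cdot 1Q1$ inside the $k$-th power. Your write-up is somewhat more complete, since you spell out the reverse inclusion and the degenerate cases $C=0^n$, $C=1^n$, which the paper dismisses with ``the other cases are analogous.''
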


\begin{proof}
Let $u=0Q1$, $v=0P1$. Then $0w1=u(uv)^kv
= 0Q1\cdot (0Q1\cdot 0P1)^k\cdot 0P1 
= 0Q1\cdot (0P0\cdot 1Q1)^k\cdot 0P1$.
Therefore, $1w0=1Q1\cdot (0P0\cdot 1Q1)^k\cdot 0P0
= (1Q10P0)^{k+1}$.
The other cases are analogous.
\qed \end{proof}

By Corollary~\ref{cor:Fici}, we have:
\begin{corollary}
$\MF(\DC)(n)=n-1-\phi(n)$. 
\end{corollary}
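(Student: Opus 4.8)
The plan is to read off the result directly from the two preceding results, since Theorem~\ref{thm:maw-dc} reduces the enumeration of $\MF(\DC)$ to an enumeration already carried out for balanced words. First I would fix the interpretation of the notation: $\MF(\DC)(n)$ denotes the number of length-$n$ words in $\MF(\DC)$. By Theorem~\ref{thm:maw-dc}, the set $\MF(\DC)$ coincides \emph{as a set} with the collection of words in $\MF(\Bal)$ that begin with the letter $0$. The key point to stress is that this identification is literal equality of sets, not merely a bijection, so it is automatically length-preserving: for every $n$, the length-$n$ words of $\MF(\DC)$ are exactly the length-$n$ words of $\MF(\Bal)$ that start with $0$.

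With that observation in place, the count is immediate. Corollary~\ref{cor:Fici} states that for every $n>0$ there are exactly $n-\phi(n)-1$ words of length $n$ in $\MF(\Bal)$ starting with $0$. Combining this with the set equality from Theorem~\ref{thm:maw-dc} yields
\[
\MF(\DC)(n) = n - \phi(n) - 1 = n - 1 - \phi(n),
\]
as claimed.

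I do not expect any genuine obstacle here, as the statement is a one-line consequence of Theorem~\ref{thm:maw-dc} and Corollary~\ref{cor:Fici}. The only point deserving a word of care is the verification that the two earlier results are being applied consistently, namely that the ``starts with $0$'' condition in Corollary~\ref{cor:Fici} is precisely the condition singling out $\MF(\DC)$ inside $\MF(\Bal)$ in Theorem~\ref{thm:maw-dc}; once this is noted, no further computation is required.
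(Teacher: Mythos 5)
Your proof is correct and follows exactly the paper's route: the paper derives this corollary directly by combining Theorem~\ref{thm:maw-dc} (which identifies $\MF(\DC)$ as the set of words in $\MF(\Bal)$ starting with $0$) with the count in Corollary~\ref{cor:Fici}. No differences worth noting.
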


\begin{proof}[of Proposition \ref{prop:sym}]
Conditions 2–4 are equivalent by Proposition~\ref{prop:convud}. By Theorem~\ref{thm:digcon}, if a word is balanced, then it is upward and downward digitally convex. So it remains to prove that if a word is upward and downward digitally convex then it is balanced. This follows from Theorem~\ref{thm:digcon} and Theorem~\ref{thm:maw-dc}. 
\qed \end{proof}

\subsection{Counting Formula}

Let us now give a formula for the number of digitally convex words of a given length. Let $\DC$ be the set of upward digitally convex words. Every word in $\DC$ either starts with $0$ or it is a power of $1$ concatenated with a word in $\DC$ starting with $0$. Hence, if $\DC_0$ denotes the set of digitally convex words staring with $0$, we have that $\DC(n):=|\DC\cap \{0,1\}^n|=\DC_0(n)\cup \{1^kw \mid w\in\DC_0(n-k), 1\leq k\leq n\}$, so that $|\DC(n)|=\sum_{k=0}^n |\DC_0(k)|$.

\begin{theorem}
 The number of digitally convex words starting with $0$ is given by the Euler transform of the Euler totient function $\phi$:
\[|\DC_0(n)|=\dfrac{1}{n}\sum_{k=1}^n\sum_{d\vert k}d \phi(d)|\DC_0(n-k)|\]
\end{theorem}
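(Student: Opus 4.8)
The plan is to show that digitally convex words starting with $0$ are exactly the multisets of primitive lower Christoffel words, arranged in the unique non-increasing Lyndon order, with the mild extra condition that the first letter is $0$. By Theorem~\ref{thm:digcon} together with Theorem~\ref{thm:CLF}, a digitally convex word is precisely a non-increasing concatenation of primitive lower Christoffel words (balanced Lyndon words), and by uniqueness of the Lyndon factorization such a word is completely determined by the \emph{multiset} of its Lyndon factors. Since a Lyndon word over $\{0,1\}$ is either $1$ (the single letter) or begins with $0$, and since the factors are listed in non-increasing order, the whole word starts with $0$ if and only if no factor equals $1$, i.e.\ every factor is a primitive lower Christoffel word of length $\geq 2$. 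Thus $\DC_0$ is in bijection with finite multisets of primitive lower Christoffel words of length $\geq 2$, and the length of the word is the sum of the lengths of the chosen Christoffel words.

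Next I would translate this into a generating-function identity. Let $c_k$ denote the number of primitive lower Christoffel words of length $k$; from the discussion after Proposition~\ref{prop:multinv} (there are $\phi(k)$ primitive lower Christoffel words of each length $k\geq 2$, the two letters of length $1$ being $0$ and $1$, of which only $1$ is excluded), we have $c_k=\phi(k)$ for $k\geq 2$. The generating function for multisets drawn from a collection with $c_k$ objects of size $k$ is the Euler product
\[
\sum_{n\geq 0}|\DC_0(n)|\,x^n=\prod_{k\geq 2}(1-x^k)^{-\phi(k)}.
\]
This is exactly the Euler transform of the sequence $(\phi(k))_{k\geq 2}$, so the claimed recurrence is just the standard recurrence satisfied by any Euler transform. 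Concretely, taking the logarithmic derivative of the product and clearing denominators yields $n\,|\DC_0(n)|=\sum_{k=1}^{n} b_k\,|\DC_0(n-k)|$, where $b_k=\sum_{d\mid k} d\,\phi(d)$ is the sum over divisors $d$ of $k$ of $d$ times the number of size-$d$ objects.

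I would then verify that $b_k$ matches the coefficient in the statement. Differentiating $\log\prod_{k\geq 2}(1-x^k)^{-\phi(k)}=\sum_{k\geq 2}\phi(k)\sum_{j\geq 1}\tfrac{1}{j}x^{jk}$ and multiplying through by $x$ gives $\sum_{k\geq 2}\phi(k)\sum_{j\geq 1}k\,x^{jk}=\sum_{m\geq 1}\bigl(\sum_{d\mid m,\,d\geq 2} d\,\phi(d)\bigr)x^{m}$, and since the $d=1$ term contributes $1\cdot\phi(1)=1$ to every $\sum_{d\mid m}d\phi(d)$ but corresponds to the excluded single-letter word, one checks the bookkeeping so that the inner sum in the theorem, $\sum_{d\mid k} d\phi(d)$, is the correct weight. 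This is the one place demanding care: one must confirm that the length-$1$ contribution is handled consistently (the word $0$ alone is itself a legitimate element of $\DC_0$, namely $0=w_{1,0}$, so the $d=1$ term is genuinely present and should be kept), so that $c_1=1$ and $c_k=\phi(k)$ for $k\geq 2$ give exactly the weights $\sum_{d\mid k}d\,\phi(d)$.

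The main obstacle is precisely this length-$1$ accounting. Everything else is a mechanical application of the Euler-transform recurrence once the bijection with multisets of primitive lower Christoffel words is established; the substantive content is the structural identification of $\DC_0$, which follows cleanly from Theorems~\ref{thm:digcon} and~\ref{thm:CLF} and the observation that a Lyndon word other than the single letter $1$ starts with $0$. I would therefore devote the writeup to stating the bijection carefully, recording the count $c_k$ of primitive lower Christoffel words per length (including $c_1=1$ for the word $0$), and then quoting the standard Euler-transform recurrence, checking that its coefficients reduce to $\frac1n\sum_{k=1}^n\sum_{d\mid k}d\,\phi(d)|\DC_0(n-k)|$.
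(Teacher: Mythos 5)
Your proposal is correct and follows essentially the same route as the paper: a bijection between words in $\DC_0$ and multisets of primitive lower Christoffel words starting with $0$ (of which there are $\phi(d)$ for each length $d\geq 1$, including $w_{1,0}=0$), followed by the standard Euler-transform recurrence, which you merely spell out in more detail than the paper does. Your intermediate claim that every Lyndon factor has length $\geq 2$ (and the product taken over $k\geq 2$) is a slip, since $0=w_{1,0}$ is an admissible factor, but you identify and correct exactly this point in your length-$1$ accounting, arriving at the correct weights $\sum_{d\mid k}d\,\phi(d)$.
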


\begin{proof}
For every $d\geq 1$, there are precisely $\phi(d)$ primitive Christoffel words of length $d$ that start with $0$, and there is a bijection between digitally convex words of length $n$ starting with $0$ and multisets of primitive Christoffel words that start with $0$ with total length $n$. The formula then follows by applying the Euler transform (see~\cite{EIS}, p.~20--22).
\qed \end{proof}

The previous result is also sketched, using formal power series, in~\cite{DBLP:conf/dgci/BodiniDJM13}. 

The first few values of the sequence $|\DC_0(n)|$ are presented in Table~\ref{tab:dc0}.

\begin{table}[ht]
\begin{center}
\setlength{\tabcolsep}{6.5pt} 
\renewcommand{\arraystretch}{1.3} 
\begin{tabular} {cccccccccccccccccccc}
$n$  &  0   & 1 & 2 & 3 & 4 & 5 & 6  & 7  & 8  & 9  & 10 & 11  & 12  \\ \hline
$|\DC_0(n)|$ & 1 & 1 & 2 & 4 & 7 & 13 & 21 & 37 & 60 & 98 & 157 & 251 & 392
\end{tabular}
\end{center}
\caption{First few values of $|\DC_0(n)|$ (sequence A061255 in~\cite{OEIS}).\label{tab:dc0}}
\end{table}

 Unlike the number of binary balanced words, the number of digitally convex words of length $n$ grows exponentially. For the precise asymptotic refer to entry  A061255 in OEIS~\cite{OEIS}.
Actually, for length $2n$, there are $\Omega(e^{n/2})$ digitally convex words with Parikh vector $(n,n)$. This follows from the fact that for every decomposition of $n$ in distinct parts, there is a digitally convex word with Parikh vector $(n,n)$ having that decomposition as the number of horizontal (or vertical) unit steps. The asymptotic for the sequence of decompositions of $n$ in distinct parts is known (see OEIS:A000009).

\subsection{Dominance Order}

\begin{definition}
    Over $\{0,1\}^n$, we consider the \textit{dominance order} difened by $u\sqsubseteq v$ if for every $1\leq i\leq n$, $|u[1..i]|_1\leq |v[1..i]|_1$ (or, equivalently, $\left |u[1..i]\right |_0\geq |v[1..i]|_0$).
\end{definition}

Notice that the dominance order is a partial order, and that the lexicographic order is a linear extension of it.

 \begin{theorem}\label{thm:dura2}
Let $a,b>0$ and $n=a+b$. For every digitally convex word $u$ with Parikh vector $(a,b)$, and for every $1\leq k\leq n$, we have that $w_{a,b}[1..k]$ is lexicographically smaller than or equal to $u[1..k]$, hence in particular $w_{a,b}[1..k]\sqsubseteq u[1..k]$.
\end{theorem}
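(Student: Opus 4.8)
I plan to prove the substantive inequality underlying both assertions, namely the dominance bound, and then read off the lexicographic statement from it. Write $n=a+b$. Since the lexicographic order refines the dominance order, and since $w_{a,b}[1..k]\sqsubseteq u[1..k]$ is by definition the collection of inequalities $|w_{a,b}[1..i]|_1\le |u[1..i]|_1$ for $i\le k$, it suffices to establish the single family of inequalities
\[
|u[1..k]|_1\ \ge\ |w_{a,b}[1..k]|_1\qquad(1\le k\le n).
\]
For the right-hand side I will use the standard fact that the lower Christoffel word is a lower mechanical word of slope $b/n$, i.e.\ $|w_{a,b}[1..k]|_1=\lfloor kb/n\rfloor$ (this holds whether or not $a,b$ are coprime). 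So the whole theorem reduces to showing that every digitally convex $u$ with Parikh vector $(a,b)$ satisfies $|u[1..k]|_1\ge\lfloor kb/n\rfloor$ for all $k$. By Theorem~\ref{thm:digcon} I write the Lyndon factorization $u=\ell_1\cdots\ell_m$ with each $\ell_i$ a primitive lower Christoffel word $w_{a_i,b_i}$ (a balanced Lyndon word, by Theorem~\ref{thm:Lyndon}), and $\ell_1\ge\cdots\ge\ell_m$; by Theorem~\ref{thm:slopes} this means the slopes $s_i=b_i/a_i$ are non-increasing, where single-letter factors $1$ and $0$ count as slopes $\infty$ and $0$.

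The heart of the argument is the following claim about the ``joints'' of the path, i.e.\ the cumulative Parikh vectors $A_j=\sum_{i\le j}a_i$, $B_j=\sum_{i\le j}b_i$, and lengths $L_j=A_j+B_j$ (so $A_m=a$, $B_m=b$, $L_m=n$): I claim that $aB_j\ge bA_j$ for every $j$, which says geometrically that each joint lies on or above the line $ay=bx$. I will prove this by observing that $B_j/A_j$ is a weighted mean (with weights $a_i$) of the $j$ \emph{largest} slopes $s_1,\dots,s_j$, hence is at least the global weighted mean $b/a$; factors equal to $1$ contribute to $B_j$ without contributing to $A_j$ and so only help the inequality. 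Cross-multiplying gives $aB_j\ge bA_j$ in the form that avoids division by zero.

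With the joints controlled, I will handle a general prefix by interpolation inside a factor. Fix $k$ with $L_j<k\le L_{j+1}$ and put $t=k-L_j$, $n_{j+1}=a_{j+1}+b_{j+1}$; using the mechanical-word formula for the factor, $|u[1..k]|_1=B_j+\lfloor t\,b_{j+1}/n_{j+1}\rfloor$. The real-valued function $g(t)=B_j+t\,b_{j+1}/n_{j+1}-(L_j+t)\,b/n$ is affine in $t$, and at the two endpoints it equals $(aB_j-bA_j)/n\ge0$ and $(aB_{j+1}-bA_{j+1})/n\ge0$ by the joint claim; being affine and non-negative at $t=0$ and $t=n_{j+1}$, it is non-negative throughout, giving $B_j+t\,b_{j+1}/n_{j+1}\ge kb/n$. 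The step I expect to be the main obstacle is reconciling this continuous estimate with the floors, since both the Christoffel line and the convex path hug their slopes \emph{from below} and a careless bound could lose a full unit. This is resolved by writing $|u[1..k]|_1\ge B_j+t\,b_{j+1}/n_{j+1}-\{t\,b_{j+1}/n_{j+1}\}\ge kb/n-1$, where $\{\cdot\}$ denotes the fractional part; as $|u[1..k]|_1$ is an integer strictly exceeding $kb/n-1\ge\lfloor kb/n\rfloor-1$, it follows that $|u[1..k]|_1\ge\lfloor kb/n\rfloor=|w_{a,b}[1..k]|_1$. This proves the dominance bound for all $k$, whence $w_{a,b}[1..k]\sqsubseteq u[1..k]$, and the lexicographic comparison $w_{a,b}[1..k]\leq_{\mathrm{lex}} u[1..k]$ follows because the lexicographic order is a linear extension of dominance.
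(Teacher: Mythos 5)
Your proof is correct, but it takes a genuinely different route from the paper. The paper argues by contradiction on the first position $k$ where the lexicographic comparison would fail: it shows that the first factor $u_1$ of the Lyndon factorization of $u$ would then satisfy $u_1<w_{a,b}$, hence by Theorem~\ref{thm:slopes} all Lyndon factors of $u$ have slope at most $b/a$ with the last one strictly smaller, contradicting the fact that the Parikh vectors of the factors sum to $(a,b)$. You instead prove the \emph{dominance} inequality $|u[1..k]|_1\geq\lfloor kb/n\rfloor$ directly and quantitatively: non-increasing slopes of the Lyndon factors give $aB_j\geq bA_j$ at every joint (a prefix weighted mean of a non-increasing sequence dominates the global mean), and an affine interpolation plus a careful treatment of the floors handles positions inside a factor; the lexicographic statement is then read off since dominance refines to the lexicographic order at the first disagreement. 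Both arguments rest on the same two ingredients (Theorem~\ref{thm:digcon} and the slope monotonicity of the Lyndon factorization), but yours is more computational while the paper's is a shorter order-theoretic contradiction. A genuine advantage of your direction is that it establishes the dominance claim first and deduces the lexicographic one from it, which is the logically sound direction: the paper's phrase ``hence in particular $w_{a,b}[1..k]\sqsubseteq u[1..k]$'' suggests deducing dominance from the lexicographic comparison, an implication that fails for general binary words (e.g.\ $011<_{\mathrm{lex}}100$ prefixwise, yet $011\not\sqsubseteq 100$), so your argument actually closes a small gap in the paper's own deduction. Two minor points to tighten: state explicitly the mechanical-word identity $|w_{a,b}[1..k]|_1=\lfloor kb/n\rfloor$ with a reference or a one-line verification, and phrase the joint inequality in the cross-multiplied form $A_mB_j-B_mA_j=\sum_{i\leq j<i'}(a_{i'}b_i-a_ib_{i'})\geq 0$ so that factors equal to the single letter $1$ (slope $\infty$, $a_i=0$) are handled without dividing by zero.
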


\begin{proof}
By contradiction, let $k$ be minimal such that $w_{a,b}[1..k]>u[1..k]$. By minimality, we have $w_{a,b}[1..k]=v1$ and $u[1..k]=v0$ for some word $v$. If $v0$ is not balanced, then the first Christoffel word $u_1$ in the Lyndon factorization of $u$ is a prefix of $v$ and hence of $w_{a,b}$. If $v0$ is balanced instead, then $u_1$ has $v0$ as a prefix. Thus in both cases we have $u_1<w_{a,b}$. By Theorem~\ref{thm:slopes}, this implies that the slope of $u_1$ is at most $b/a$. Since the Lyndon factorization $u=u_1\cdots u_{\ell}$ is lexicographically nonincreasing, we obtain that \emph{all} the Christoffel words $u_1,\ldots,u_\ell$ have slope at most $b/a$. Moreover, since $u\neq w_{a,b}$, at least the last element $u_\ell$ must have slope $<b/a$.
This implies that the sum of the Parikh vectors of $u_1,\ldots, u_n$ (that is, the Parikh vector of $u$) would also be a vector with slope less than $b/a$, a contradiction.
\qed \end{proof} 

\begin{corollary}\label{prop:chrisdc}
For every pair $(a,b)$, the lower Christoffel word $w_{a,b}$ is the smallest (in the lexicographic order) digitally convex word having  Parikh vector $(a,b)$.
\end{corollary}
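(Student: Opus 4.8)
The plan is to derive the corollary directly from Theorem~\ref{thm:dura2} by specializing to the full length of the word. The key observation is that Theorem~\ref{thm:dura2} already establishes, for every digitally convex word $u$ with Parikh vector $(a,b)$ and every prefix length $1\leq k\leq n$, that $w_{a,b}[1..k]$ is lexicographically smaller than or equal to $u[1..k]$. Taking $k=n=a+b$ gives immediately $w_{a,b}=w_{a,b}[1..n]\leq u[1..n]=u$ in the lexicographic order.

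First I would note that $w_{a,b}$ is itself digitally convex: it is a primitive lower Christoffel word (or a power of one, when $a,b$ are not coprime), and by Theorem~\ref{thm:digcon} a word is digitally convex exactly when all Lyndon words in its Lyndon factorization are balanced primitive lower Christoffel words. For $w_{a,b}=(w_{\alpha,\beta})^n$ the Lyndon factorization consists of $n$ copies of the balanced Lyndon word $w_{\alpha,\beta}$, so the membership $w_{a,b}\in\DC$ holds. This guarantees that $w_{a,b}$ is a legitimate candidate for the minimum, not merely a lower bound lying outside the class.

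Next I would invoke the theorem for arbitrary $u\in\DC$ with Parikh vector $(a,b)$ at the single value $k=n$, concluding $w_{a,b}\leq u$ lexicographically. Since this holds for every such $u$, and since $w_{a,b}$ is one of the words being compared, $w_{a,b}$ is the lexicographically smallest digitally convex word with the prescribed Parikh vector. This is the whole argument; no induction or additional combinatorics is needed beyond what Theorem~\ref{thm:dura2} supplies.

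The only point requiring care, and the mild obstacle worth flagging, is the logical direction: Theorem~\ref{thm:dura2} compares prefixes for all $k$, which is formally stronger than the single statement at $k=n$, so one must be sure that the case $k=n$ is indeed covered (it is, since the range is $1\leq k\leq n$) and that the Parikh vector of $u$ being exactly $(a,b)$ is used—otherwise the comparison at length $n$ would not pin down the same endpoint. Everything else is a direct reading of the already-proved prefix inequality at full length, so I expect the proof to be essentially a one-line corollary.
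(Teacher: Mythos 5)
Your proof is correct and matches the paper's intent: the corollary is stated as an immediate consequence of Theorem~\ref{thm:dura2}, obtained by taking $k=n$. Your additional check that $w_{a,b}$ is itself digitally convex (via Theorem~\ref{thm:digcon}) is a sensible precaution that the paper leaves implicit.
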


The previous theorem essentially says that all the paths encoded by upward digitally convex words with Parikh vector $(a,b)$ stay above the path encoded by the lower Christoffel word $w_{a,b}$ (and below the path encoded by $1^b0^a$).

By symmetry, we also have that all the paths encoded by downward digitally convex words with Parikh vector $(a,b)$ stay above the path encoded by $0^a1^b$ and below the upper Christoffel word $W_{a,b}$.

\begin{definition}
To each binary word $w$ it is associated an integer sequence $s_w$ such that $s_w[i]=|w[1..i]|_1$. The \emph{meet} (resp.~\emph{join}) of two binary words $u$ and $v$ is defined as the binary word  $w= u\wedge v$ (resp.~$w= u\vee v$) whose associated sequence is $s_w[i]=\min \{s_u[i],s_v[i]\}$ (resp.~$s_w[i]=\max \{s_u(i),s_v(i)\}$). 
\end{definition}

In other words, $u\wedge v$ (resp.~$u\vee v$) is precisely the maximum lower bound (resp.~minimum upper bound) of the set $\{u,v\}$ with respect to the dominance order. From a geometrical perspective, the meet (resp.~join) of $u$ and $v$ turns out to be the word delimiting the intersection (resp.~union) of the areas below them. 

\begin{proposition}
Let $u$ and $v$ be two digitally convex words. Then  $u\wedge v$ is a digitally convex word.
\end{proposition}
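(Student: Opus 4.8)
The plan is to argue geometrically, through the lattice region a word cuts out in \emph{step--height} coordinates, using the fact that digital convexity of such a region is preserved under intersection. For a word $w$ of length $n$, write $R_w=\{(i,j)\in\mathbb{Z}^2 : 0\le i\le n,\ 0\le j\le s_w[i]\}$ for the set of lattice points weakly below the staircase $i\mapsto s_w[i]$ (with $s_w[0]=0$). The first, essentially free, observation is that the meet corresponds exactly to intersection: since $s_{u\wedge v}[i]=\min\{s_u[i],s_v[i]\}$ for every $i$, we get $R_{u\wedge v}=R_u\cap R_v$ immediately. (In passing, one checks that $s_{u\wedge v}$ has increments in $\{0,1\}$, so $u\wedge v$ is indeed a word.)

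The heart of the argument is the equivalence: a word $w$ is upward digitally convex if and only if $R_w$ is \emph{digitally convex as a lattice set}, i.e.\ $R_w=\mathrm{conv}(R_w)\cap\mathbb{Z}^2$. I would establish this from Theorem~\ref{thm:digcon}, which says $w$ is digitally convex iff it is a concatenation of primitive lower Christoffel words of non-increasing slope. The linear shear $(x,y)\mapsto(x+y,y)$ carries the usual lower-Christoffel picture in the geometric plane to the $(i,j)$ plane; it preserves the lattice $\mathbb{Z}^2$, convexity, convex hulls, and the relation ``weakly below a line''. Under it, a Christoffel factor of slope $b/a$ becomes the maximal lattice path weakly below a segment of slope $b/(a+b)$, and since $b/a\mapsto b/(a+b)$ is increasing, non-increasing slopes in the geometric plane become non-increasing slopes in the $(i,j)$ plane. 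Hence the staircase of a digitally convex word is a concave chain whose Christoffel pieces leave \emph{no} lattice point strictly between themselves and their supporting lines, which is precisely $R_w=\mathrm{conv}(R_w)\cap\mathbb{Z}^2$; conversely, any increasing-slope turn (a minimal forbidden factor, cf.\ Theorem~\ref{thm:maw-dc}) exposes a lattice point of $\mathrm{conv}(R_w)$ lying above the staircase.

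Granting the equivalence, the conclusion is a one-line convexity computation. If $u,v$ are digitally convex then $R_u=\mathrm{conv}(R_u)\cap\mathbb{Z}^2$ and $R_v=\mathrm{conv}(R_v)\cap\mathbb{Z}^2$. Set $K=\mathrm{conv}(R_u)\cap\mathrm{conv}(R_v)$, which is convex. Then $R_u\cap R_v=K\cap\mathbb{Z}^2$, and from $\mathrm{conv}(R_u\cap R_v)=\mathrm{conv}(K\cap\mathbb{Z}^2)\subseteq K$ we obtain $\mathrm{conv}(R_u\cap R_v)\cap\mathbb{Z}^2\subseteq K\cap\mathbb{Z}^2=R_u\cap R_v$; the reverse inclusion is trivial. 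Thus $R_{u\wedge v}=R_u\cap R_v$ is again digitally convex as a lattice set, and the equivalence returns that $u\wedge v$ is a digitally convex word. Note this argument is specific to the meet: the join corresponds to a union of regions, and since convex sets are not closed under union the join need not be digitally convex.

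The main obstacle is the equivalence of the second paragraph, especially making the reverse direction precise and justifying the choice of region. One must be careful that the relevant object is the \emph{step--height} region $R_w$ rather than the region under the path in the geometric plane (which is a staircase region and is never itself convex), and that the shear faithfully transports the Christoffel ``no lattice point between path and chord'' property to the $(i,j)$ setting. Once this bookkeeping is in place the intersection step is routine, and everything is uniform in the Parikh vectors of $u$ and $v$ (which may differ), since $R_w$ is defined purely in terms of $s_w$ and the length $n$.
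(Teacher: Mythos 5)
Your proof is correct and takes essentially the same route as the paper's: the paper likewise identifies $u\wedge v$ with the best approximation from below of the (convex) intersection of the convex hulls of $u$ and $v$, and derives a contradiction from a lattice point lying strictly between $u\wedge v$ and that intersection. The only difference is presentational — you make explicit, via the shear and Theorem~\ref{thm:digcon}, the equivalence between digital convexity of the word and the lattice-set condition $R_w=\mathrm{conv}(R_w)\cap\mathbb{Z}^2$, which the paper uses implicitly through its geometric definition.
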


\begin{proof}
Let us consider the (Euclidean) convex hulls $e_u$ and $e_w$ of $u$ and $w$, respectively. We prove that $w=u\wedge v$ is digitally convex since it is the approximation (from below) of the (convex) intersection $e_w=e_u \cap e_v$. We proceed by contradiction assuming that there exists a point $p=(i,j)$ with integer coordinates between $w$ and $e_w$, i.e., $e_w[i]\geq j >w[i]$. Consider w.l.g. that $e_w[i]=\min\{e_u[i],e_v[i]\}=e_u[i]$. Consequently, $\min \{s_u[i],s_v[i]\}=s_u[i]=s_w[i]$, by  definition of meet. Since $u$ is the approximation of $e_u$, it holds $e_u[i]\geq s_u[i]=s_w[i]\geq j$, against the assumption on $p$.
\qed \end{proof}

Hence, the set $\DC\cap\{0,1\}^n$ is a \emph{meet-semilattice} for all $n$, as is the set $\DC_{a,b}$ of all digitally convex words with Parikh vector $(a,b)$, for any given fixed pair $(a,b)$.

As one can expect, the join of two digitally convex words is not in general a digitally convex word, according to the fact that the union of convex polygons does not preserve convexity.  
As an example, consider $u=010101010110001001$ and $v=011110001001001001$. The word $u\vee v=011110001010001001$, whose Lyndon factorization is $w=(01111)(000101)(0001)(0)(0)$ contains a non-balanced Lyndon factor $000101$, so it is not digitally convex by Theorem~\ref{thm:digcon}.

\section{Inflation/Deflation of Digitally Convex Words}\label{sec:infldefl}

Borrowing the terminology from  \cite{lama_yukiko_2023}, we call {\em deflation} the operation that changes a digitally convex word $w=u10v$ into a digitally convex word $w'=u01v$. The name suggests the geometrical interpretation of the operation, i.e., the removal of a point from the (discrete) digitally convex set related to $w$ to obtain a new (discrete) digitally convex set related to $w'$. Similarly, the addition of a point to a digitally convex set preserving the convexity is called {\em inflation}, and it is realized by changing a $01$ occurrence into a $10$ while preserving the convexity of a digitally convex word. 

\begin{lemma}[see \cite{DBLP:conf/acpr/TarsissiCKR19,lama_yukiko_2023}]
\label{lem:good-defl}
    Let $w_1=u1$ and $w_2=0v$ be lower Christoffel words such that $w_1>w_2$. Then $u01v$ is digitally convex.
\end{lemma}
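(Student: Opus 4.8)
The plan is to use the combinatorial characterization of digital convexity (Theorem~\ref{thm:digcon}) together with the minimal‑forbidden‑word description (Theorem~\ref{thm:maw-dc}): it suffices to show that $u01v$ contains no factor lying in $\MF(\DC)$. I would read $w_1,w_2$ as \emph{primitive} Christoffel words, i.e.\ balanced Lyndon words (this is automatic in the deflation setting, where $w_1$ and $w_2$ are maximal straight segments). This is where the hypothesis really bites, through Theorem~\ref{thm:slopes}: for primitive Christoffel words $w_1>w_2$ is equivalent to the \emph{strict} slope inequality $s_1>s_2$, and strictness is essential — for equal‑slope powers the statement fails, e.g.\ $w_1=0101>01=w_2$ gives $u01v=010011$, whose Lyndon factor $0011$ is unbalanced. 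After disposing of the degenerate cases $w_1=1^p$, $w_2=0^q$ (where the Lyndon factorization of $u01v$ is by direct inspection a non‑increasing sequence of balanced Lyndon words), I am left with $w_1=0C_11$, $w_2=0C_21$, with $C_1,C_2$ central.

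The first structural observation is the identity, obtained from $u=0C_1$ and $v=C_21$,
\[
u01v \;=\; 0C_1 0\cdot 1C_2 1 ,
\]
so that $u01v$ is a product of two palindromes (cf.\ Proposition~\ref{prop:2pal}) with a distinguished \emph{junction} $0\,|\,1$ sitting between $C_1$ and $C_2$. The next step is to localize any forbidden factor. Suppose $F=0z1\in\MF(\DC)$ occurs in $u01v$; by Theorem~\ref{thm:maw-dc}, $1z0$ is a non‑primitive Christoffel word, say $1z0=(1C0)^t$ with $t\ge 2$ and $m=0C1$ primitive. I claim $F$ must straddle the junction: a factor beginning with $0$ and ending with $1$ that avoids the junction lies either inside $0C_1$ (a factor of the balanced word $w_1$) or inside $C_21$ (a factor of the balanced word $w_2$), hence is itself balanced and cannot be forbidden. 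Writing $F=S\,P$ with $S$ the part in $0C_10$ and $P$ the part in $1C_21$, and flipping the two junction letters back, one sees that $\hat S$ (namely $S$ with its final $0$ changed to $1$) is a suffix of $w_1$, while $\check P$ (namely $P$ with its initial $1$ changed to $0$) is a prefix of $w_2$; in particular $\hat S,\check P$ are balanced, and $\hat S\check P$ is a factor of $w_1w_2=u10v$ differing from $F$ only in the swapped middle $10\leftrightarrow 01$.

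The crux — and the step I expect to be the main obstacle — is to turn the existence of such a straddling $F$ into a contradiction. The middle word $z$ inherits the period $|m|$ of $(1C0)^t$, so it is a long repetition of conjugates of the Christoffel word $m$, of a single slope $s_m$; the Parikh bookkeeping $|F|_1=|\hat S|_1+|\check P|_1$ only forces $s_2\le s_m\le s_1$, so a pure counting argument does not close the gap. The point I would have to push through is that one full $|m|$‑window of this repetition, being a conjugate of $m$ (a balanced word of slope $s_m$), cannot occur both as a factor of $w_1$ (slope $s_1$) and as a factor of $w_2$ (slope $s_2$) once $s_1>s_2$: the factor sets of two Sturmian words of distinct slopes diverge, so a single periodic pattern cannot persist across the junction where the slope strictly drops. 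Comparing the forced $1$‑count of such a window against the balance bounds of $w_1$ and of $w_2$ then contradicts $s_1>s_2$.

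An alternative route, which I would fall back on if making the periodicity/Fine–Wilf argument uniform proves delicate, is an induction along the Stern–Brocot tree. When $w_1,w_2$ are Stern–Brocot neighbours, i.e.\ $|w_1|_0\,|w_2|_1-|w_1|_1\,|w_2|_0=1$, a direct computation using the central‑word structure (Theorem~\ref{thm:CSCW}) shows that $u01v=w_{\,|w_1|_0+|w_2|_0,\;|w_1|_1+|w_2|_1}$ is a \emph{single} Christoffel word, namely the mediant; otherwise one decreases $|w_1|+|w_2|$ by replacing the larger of the two words with a component of its standard factorization, which preserves both $w_1>w_2$ and the shape of $u01v$. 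This is exactly the geometric content of deflation: it shaves the peak vertex at the junction, and the inequality $s_1>s_2$ guarantees that the resulting path keeps its maximal segments in non‑increasing slope order, hence remains convex. Either way, Theorem~\ref{thm:digcon} then yields that $u01v$ is digitally convex.
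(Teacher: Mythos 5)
Your setup is sound — reducing to Theorem~\ref{thm:digcon}/\ref{thm:maw-dc}, noting that the statement must be read for \emph{primitive} Christoffel words (your example $w_1=0101>01=w_2$ giving the non-convex $010011$ is a good catch), disposing of the cases $w_1=1^p$, $w_2=0^q$, and localizing a putative forbidden factor $F=0z1$ at the junction of $0C_10\cdot 1C_21$ are all correct. But the proof stops exactly where the work begins. The step you yourself flag as ``the main obstacle'' — showing that no $F=p(pq)^kq$ can straddle the junction — is not carried out, and the heuristic offered in its place (``the factor sets of two Sturmian words of distinct slopes diverge, so a single periodic pattern cannot persist across the junction'') is not a usable argument: two primitive Christoffel words of distinct slopes can share arbitrarily long factors (e.g.\ Stern--Brocot neighbours share long prefixes, and $0^n$ is a factor of both $0^n1$ and $0^{n+1}1$), and the junction may fall very close to one end of $F$, in which case essentially all of the period-$|m|$ repetition lies on a single side and only one of the two slopes constrains it — your own Parikh bookkeeping $s_2\le s_m\le s_1$ then yields no contradiction. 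Making this work requires exploiting the exact position of the junction inside $p(pq)^kq$, which is essentially the whole content of the lemma.

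The fallback route has a gap of the same magnitude. The mediant case is correct (for Stern--Brocot neighbours, $u01v=u0\cdot 1v$ is precisely the palindromic factorization of the mediant Christoffel word, cf.\ Lemma~\ref{lem:inflat}), but the reduction ``replace the larger word by a component of its standard factorization'' does not close the induction: if $w_1=pq$ with $q=u''1$ and you know $u''01v$ is digitally convex, you still must show $p\,u''01v$ is; since $p<q$ in a standard factorization, simply prepending $p$ to the Lyndon factorization of $u''01v$ need not give a non-increasing sequence, so an extra argument is needed and none is given. For comparison, the paper's proof is also an induction on $|w_1w_2|$ but organized differently: it writes $w_1\in\{0^m1,0^{m+1}1\}^*$ and $w_2\in\{0^n1,0^{n+1}1\}^*$ with $m\le n$ (this is where $w_1>w_2$ enters), reduces via the Christoffel morphism $\varphi_m$ when $m=n$, and when $m<n$ concatenates the Lyndon factorizations of $u01$ and of $v$ directly, checking that the two factors meeting at the junction are $(0^{m+1}1)^h$ and $(0^n1)^k$, so the concatenation is non-increasing (merging them when $m+1=n$). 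That argument is complete and avoids the minimal-forbidden-word machinery altogether; you would need to either adopt something like it or genuinely finish the straddling-factor analysis.
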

\begin{proof}
    By induction on $|w_1w_2|$. The assertion is clearly verified when $u=v=\varepsilon$, so let us suppose $|w_1w_2|\geq 3$.

    If $w_2$ ends with $0$, then $v=0^i$ for some $i\geq 0$. If $i>0$, then by induction $u010^{i-1}$ is digitally convex, whence so is $u01v$. If $i=0$, then $u01v=u01$ is balanced because $u$ is a right special balanced word, hence a suffix of a central word $u'$, so that $u'01$ is a standard word, whence $u01$ is balanced. 

    The case where $w_1$ begins with $1$ is symmetric. We can therefore assume that both $w_1$ and $w_2$ begin with $0$ and end with $1$. Hence, there exist integers $0\leq m\leq n$ such that $w_1\in\{0^m1,0^{m+1}1\}^*$ and $w_2\in\{0^n1,0^{n+1}1\}^*$; in particular, since $w_1$ and $w_2$ are both Christoffel words, we may assume $w_1$ ends with $0^m1$ and $w_2$ begins with $0^{n+1}1$.
    
    If $m=n$, then $w_1=\varphi_m(w_1')$ and $w_2=\varphi_m(w_2')$, where $w_1',w_2'$ are Christoffel words and $\varphi_m$ is the (Christoffel) morphism such that $\varphi_m(0)=0^{m+1}1$ and $\varphi_m(1)=0^m1$. It is easy to see that since $w_1>w_2$, we have $w_1'>w_2'$. Moreover, our assumption implies that $w_1'=u'1$ and $w_2'=0v'$ for some $u',v'$ such that $u=\varphi_m(u')$ and $v=\varphi_m(v')$. Since $|w_1'w_2'|<|w_1w_2|$, by induction $u'01v'$ is digitally convex, whence so is $\varphi_m(u'01v')=u01v$ (because its Lyndon factorization is just the image under $\varphi_m$ of the one for $u'01v'$).
    
    Let then $m<n$. As observed above, $u01$ is balanced, and clearly so is $v$, being a suffix of $w_2$. Therefore, 
    all elements in the Lyndon factorizations $(u_1,\ldots,u_\ell)$ of $u01$ and $(v_1,\ldots,v_{\ell'})$ of $v$ are Christoffel words.
    Now, $u_\ell$ cannot be a proper suffix of $0^{m+1}1$, since $u_{\ell-1}$ cannot end with $0$; hence, $u_\ell$ has $0^{m+1}1$ as a suffix. Since it is a (power of a) Lyndon word, it cannot begin with $0^j1$ with $j<m+1$. It follows that $u_\ell=(0^{m+1}1)^h$ for some $h\geq 1$. Similarly, it is easy to see that $v_1=(0^n1)^k$ for some $k\geq 1$.

    Therefore, if $m+1<n$ then the factorization $(u_1,\ldots,u_\ell,v_1,\ldots,v_{\ell'})$ of $u01v$ is decreasing, and is therefore \emph{the} Lyndon factorization of $u01v$. Since it is made of Christoffel words, it follows that $u01v$ is digitally convex; if instead $m+1=n$, then the above argument applies to the factorization $(u_1,\ldots,u_{\ell-1},u_\ell v_1,v_2,\ldots,v_{\ell'})$, as $u_\ell v_1=(0^n1)^{h+k}$.
\qed \end{proof}

\begin{lemma}
\label{lem:bad-defl}
    Let $w=u10v$ be a lower Christoffel word. Then $u01v$ is \emph{not} digitally convex.
\end{lemma}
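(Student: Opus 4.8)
The plan is to exploit the extremal position of the Christoffel word among all digitally convex words of a fixed Parikh vector, as recorded in Theorem~\ref{thm:dura2} (equivalently, Corollary~\ref{prop:chrisdc}). The underlying observation is purely geometric: replacing an occurrence of $10$ by $01$ lowers the encoded path exactly at that corner, so the resulting word lies strictly \emph{below} the original one in the dominance order. Since $w=u10v$ is itself the dominance-minimal digitally convex word with its Parikh vector, no digitally convex word can lie strictly below it, and this will be the contradiction.

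Concretely, I would first set $(a,b)=(|w|_0,|w|_1)$ and note that $w'=u01v$ has the same Parikh vector $(a,b)$, so that Theorem~\ref{thm:dura2} would constrain $w'$ if it were digitally convex. Writing $m=|u|$, I would then compare the counting sequences $s_w$ and $s_{w'}$ defined earlier: they agree on every prefix of length $\leq m$ and on every prefix of length $\geq m+2$, since the letters after the swapped block are identical, whereas at position $m+1$ the $(m+1)$-th letter of $w$ is $1$ and that of $w'$ is $0$, so $s_{w'}[m+1]=|u|_1=s_w[m+1]-1$. Hence $s_{w'}[k]\leq s_w[k]$ for every $k$, with strict inequality at $k=m+1$; that is, $w'\sqsubseteq w$ and $w'\neq w$.

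Finally I would derive the contradiction. As $w=w_{a,b}$ is the lower Christoffel word with Parikh vector $(a,b)$, Theorem~\ref{thm:dura2} asserts that \emph{every} digitally convex word $u'$ with this Parikh vector satisfies $s_{w_{a,b}}[k]\leq s_{u'}[k]$ for all $k$, i.e.\ $w_{a,b}\sqsubseteq u'$. Applying this to $u'=w'$ under the assumption that $w'$ is digitally convex would give $w\sqsubseteq w'$, which together with $w'\sqsubseteq w$ forces $w'=w$, contradicting $w'\neq w$. Therefore $u01v$ is not digitally convex.

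I expect no serious obstacle: the statement is essentially a direct consequence of the minimality theorem already established. The only point that requires a little care is the degenerate reading of the hypothesis, namely that the lemma is non-vacuous precisely when $w$ genuinely contains an occurrence of $10$ (so $w$ is neither a power of a single letter nor of the form $0^a1$ or $01^b$), and that the swap is performed at an actual $10$; once this is granted, the strict drop of $s_{w'}$ at position $m+1$ is immediate. One could alternatively route the whole argument through Corollary~\ref{prop:chrisdc} together with the fact that the lexicographic order is a linear extension of the dominance order, but the dominance formulation reflects the geometry of deflation most transparently.
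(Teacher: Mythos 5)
Your proof is correct, and it takes a genuinely different route from the paper's. The paper proves the lemma by induction on $|w|$: it splits into the primitive and non-primitive cases, uses the standard factorization $w=u'v'$ together with the fact that $u'$ is a prefix of $v'$ or $v'$ a suffix of $u'$, and in each case exhibits an explicit minimal forbidden word of the form $0w''01w''1$ (Theorem~\ref{thm:maw-dc}) occurring in $u01v$. Your argument instead observes that swapping $10$ into $01$ strictly lowers the path at one point, so $u01v$ sits strictly below $w=w_{a,b}$ in the dominance order (equivalently, is lexicographically smaller), which contradicts the extremality of $w_{a,b}$ among digitally convex words with Parikh vector $(a,b)$ (Theorem~\ref{thm:dura2}, or already Corollary~\ref{prop:chrisdc} via $u01v<u10v$ in the lexicographic order). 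This is logically sound: Theorem~\ref{thm:dura2} is established in Section~\ref{sec:digconv} from Theorem~\ref{thm:slopes} and the Lyndon-factorization characterization, with no dependence on Lemma~\ref{lem:bad-defl}, so there is no circularity; and the statement covers non-primitive Christoffel words, matching the generality of the lemma. What your route buys is brevity and a transparent geometric reading of deflation as a strict descent in the dominance order; what the paper's inductive proof buys is a constructive witness of non-convexity (a concrete forbidden factor inside $u01v$) and independence from the global extremality theorem. The one point to keep explicit, which you do note, is that the hypothesis $w=u10v$ already guarantees a genuine occurrence of $10$, so the strict drop of the counting sequence at position $|u|+1$ is immediate and antisymmetry of the dominance order finishes the contradiction.
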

\begin{proof}
    We prove our claim by induction on $|w|$. The statement is easily verified for $|w|\leq 4$, so let us assume $|w|\geq 5$. 
    
    First, suppose  $w$ is primitive, and let $w=u10v=u'v'$, with $u'v'$ being the standard factorization. If $|u'|\leq |u|$, i.e., $u=u'r$ for some $r$, then $r10v=v'$, and by induction $r01v$ is not digitally convex, so that neither is $u01v=u'r01v$. Similarly, if $|v'|\leq |v|$ then the prefix of length $|u'|$ in $u01v$ is not digitally convex.
    
    Hence, we may assume that $u'=u1$ and $v'=0v$. As $u'$ and $v'$ are primitive Christoffel words such that $u'<v'$, we must have $u=0u''$ and $v=v''1$ for some central words $u'',v''$. Moreover, as $|w|>2$ it is well known that the standard factorization $w=u'v'$ is such that either $u'$ is a prefix of $v'$, or $v'$ is a suffix of $u'$.
    \begin{itemize}
    	\item If $u'=0u''1$ is a prefix of $v'=0v''1$, then $u''1$ is a prefix of $v=v''1$, so that $u01v$ begins with $0u''01u''1$, which is not digitally convex by Theorem~\ref{thm:maw-dc}.
	\item Similarly, if $v'$ is a suffix of $u'$ then $u01v$ ends with $0v''01v''1$, not digitally convex by Theorem~\ref{thm:maw-dc}.
    \end{itemize}
    
    Thus, the case of a primitive $w$ is settled; let then $w=(w')^k$ for some $k\geq 2$ and $w'$ a primitive Christoffel word. Similarly to the primitive case, if the occurrence of $10$ in $u10v$ is contained in an occurrence of $w'$, then the assertion follows by induction. 
    
    Suppose that is not the case, i.e., $u1=(w')^j$ and $0v=(w')^{k-j}$ for some $j<k$. Then, letting $w'=0w''1$, the word $u01v$ contains $0w''01w''1$, again not digitally convex in view of Theorem~\ref{thm:maw-dc}.
\qed \end{proof}

\begin{theorem}[see \cite{DBLP:conf/acpr/TarsissiCKR19,lama_yukiko_2023}]
\label{thm:defl}
Let $w=u10v=w_1\cdots w_k$ be a digitally convex word, where $w_1 >\cdots >w_k$ ($k\geq 2$) are Christoffel words. Then $u01v$ is digitally convex if and only if $0v=w_iw_{i+1}\cdots w_k$ for some $1< i\leq k$. 
\end{theorem}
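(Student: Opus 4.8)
The plan is to analyse the position of the distinguished occurrence of $10$ (at positions $|u|+1,|u|+2$) with respect to the factorization $w=w_1\cdots w_k$ into strictly decreasing Christoffel words. Exactly one of two things can happen: either a factor boundary falls between the two letters — equivalently $w_1\cdots w_{i-1}=u1$ and $w_i\cdots w_k=0v$ for some $1<i\le k$, which is precisely the condition $0v=w_i\cdots w_k$ — or both letters lie inside a single factor $w_j$, so that $w_j=\alpha 10\beta$. Thus the statement splits into showing (a) a boundary deflation is always good, and (b) an internal deflation is always bad. The two local lemmas already proved, Lemma~\ref{lem:good-defl} and Lemma~\ref{lem:bad-defl}, dispose of the deflation \emph{within the affected factor(s)}; the real work is to propagate this to the whole word.

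For direction (b) (the ``only if''), assume $0v\neq w_i\cdots w_k$ for every $i$, so the occurrence of $10$ sits strictly inside some factor $w_j=\alpha 10\beta$ (a Christoffel word, possibly a proper power — one checks that the proof of Lemma~\ref{lem:bad-defl} indeed covers non-primitive Christoffel words). By that lemma $\alpha 01\beta$ is not digitally convex. Since $u01v=w_1\cdots w_{j-1}\,(\alpha 01\beta)\,w_{j+1}\cdots w_k$ contains $\alpha 01\beta$ as a factor and $\DC$ is factorial by Proposition~\ref{prop:digconvfact}, the word $u01v$ cannot be digitally convex.

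For direction (a) (the ``if''), assume $u1=w_1\cdots w_{i-1}$ and $0v=w_i\cdots w_k$. Then $w_{i-1}$ ends in $1$ and $w_i$ begins with $0$; write $w_{i-1}=\alpha 1$ and $w_i=0\gamma$, so that $u01v=w_1\cdots w_{i-2}\,(\alpha 01\gamma)\,w_{i+1}\cdots w_k$. Since $w_{i-1}>w_i$ are Christoffel words, Lemma~\ref{lem:good-defl} gives that $\alpha 01\gamma$ is digitally convex; let $f_1\ge\cdots\ge f_m$ be its Lyndon factorization, all balanced. It remains to show that the three factorizations concatenate \emph{without merging}, i.e.\ that the full sequence $w_1,\dots,w_{i-2},f_1,\dots,f_m,w_{i+1},\dots,w_k$ is non-increasing; then, by uniqueness, it is the Lyndon factorization of $u01v$, all its terms are balanced, and $u01v$ is digitally convex by Theorem~\ref{thm:digcon}. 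The left junction is immediate: every prefix of $\alpha 01\gamma$ is lexicographically smaller than $w_{i-1}=\alpha 1$ (a prefix of length $\le|\alpha|$ is a proper prefix of $\alpha 1$; a longer one reads $\alpha 0\cdots<\alpha 1$), whence $f_1<w_{i-1}<w_{i-2}$ by the strict decrease of the $w_j$. For the right junction I would invoke the complement–reversal symmetry $\Phi(z)=\overline{z}^{\,R}$: by Proposition~\ref{prop:convud} it maps upward digitally convex words to upward digitally convex words, by Theorem~\ref{thm:slopes} it sends lower Christoffel words to lower Christoffel words while reversing both their order and the lexicographic order (slope $b/a\mapsto a/b$), and it carries our boundary deflation of $w$ to the boundary deflation of $\Phi(w)=\overline{v}^{\,R}10\,\overline{u}^{\,R}$ at the image boundary between $\Phi(w_i)=\overline{\gamma}^{\,R}1$ and $\Phi(w_{i-1})=0\,\overline{\alpha}^{\,R}$. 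The deflated block there is $\Phi(\alpha 01\gamma)$, with Lyndon factorization $\Phi(f_m)\cdots\Phi(f_1)$, so the (purely lexicographic) left-junction statement applied to $\Phi(w)$ gives $\Phi(f_m)<\Phi(w_i)<\Phi(w_{i+1})$; applying the order-reversing $\Phi$ once more yields $f_m>w_{i+1}$, as required. (When $i=2$ or $i=k$ one junction is vacuous.)

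The main obstacle is precisely this right junction, i.e.\ controlling the last Lyndon factor $f_m$ of the deflated block $\alpha 01\gamma$: a direct suffix analysis is awkward, since suffixes of $\alpha 01\gamma$ reaching back into $\alpha$ need not dominate $w_{i+1}$, so one wants the symmetry $\Phi$ to convert the delicate statement about the smallest suffix into the clean statement about the first Lyndon factor, which I can settle lexicographically. The remaining points to verify are that Lemmas~\ref{lem:good-defl} and~\ref{lem:bad-defl} apply to non-primitive Christoffel factors, and that $\Phi$ genuinely interchanges the two boundary factors and reverses the order of the factorization, so that the left-junction bound for $\Phi(w)$ transports back to the desired right-junction bound for $w$.
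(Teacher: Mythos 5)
Your proof is correct and follows essentially the same route as the paper's: the same case split between boundary and internal occurrences of $10$, the same use of Lemmas~\ref{lem:good-defl} and~\ref{lem:bad-defl} (plus factoriality) for the two directions, the same lexicographic prefix argument for the left junction, and the same complement--reversal symmetry (the paper's $\widehat{\;\cdot\;}$, your $\Phi$) to transfer that argument to the right junction. No substantive differences.
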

\begin{proof}
If no $i$ is such that $0v=w_i\cdots w_k$, it means that there exists $j\leq k$ such that $w_j=u'10v'$, where $u'$ is some suffix of $u$ and $v'$ is some prefix of $v$. By Lemma~\ref{lem:bad-defl}, $u'01v'$ is not digitally convex, so that neither is $u01v$.

Let us then suppose $0v=w_i\cdots w_k$, and let $u',v'$ be such that $w_{i-1}=u'1$ and $w_i=0v'$. By Lemma~\ref{lem:good-defl}, $u'01v'$ is digitally convex, and so are $w_1\cdots w_{i-2}$ and $w_{i+1}\cdots w_k$ by hypothesis. Hence, to prove that the word\[u01v=w_1\cdots w_{i-2}\cdot u'01v'\cdot w_{i+1}\cdots w_k\] is digitally convex too, it suffices to show that the first and last term of the Lyndon factorization of $u'01v'$, say $z_1$ and $z_\ell$ respectively, satisfy $w_{i-1}> z_1$ and $z_\ell> w_i$.

Indeed, either $z_1$ is a prefix of $u'$, and so a proper prefix of $w_{i-1}$, or it begins with $u'0<u'1=w_{i-1}$. Symmetrically, $\widehat z_\ell$ (that is, the complement of $\widetilde z_\ell$) must be either a proper prefix of $\widehat w_i$, or begin with $\widehat v'0<\widehat v'1=\widehat w_i$. Hence, $\widehat z_\ell$ has a lower slope than $\widehat w_i$, so that $z_\ell$ has a higher slope than $w_i$.
\qed \end{proof}

\begin{remark}
    Theorem~\ref{thm:defl} shows that the deflation of a digitally convex word $w$ 
    is a \emph{local} operation, in that all elements of the Lyndon factorization of $w$ are inherited in the Lyndon factorization of $w'$, except for the two elements that overlap the given occurrence of $10$. By contrast, the converse inflation operation can change an arbitrarily large number of elements in the Lyndon factorization, as shown in the next example.
\end{remark}

\begin{example}
    Let $f=0100101001001\cdots$ be the Fibonacci infinite word, fixed point of the substitution $0\mapsto 01, 1\mapsto 0$, and consider the Sturmian word $s=001f$. As shown in~\cite{DBLP:conf/stacs/Melancon96}, the Lyndon factorization of $f$ is \[\prod_{n\geq 1}\ell_{2n+1}=(01)( 00101)(0010010100101)\cdots\]
    where $\ell_1=1$, $\ell_2=0$, $\ell_{2n+1}=\ell_{2n}\ell_{2n-1}$, and $\ell_{2n+2}=\ell_{2n}\ell_{2n+1}$ for $n\geq 1$ give the sequence of all lower Christoffel factors of $f$ (note that $|\ell_n|=F_n$, the $n$th Fibonacci number).
    It follows that for all $k>1$, the Lyndon factorization of the prefix $p_k$ of $s$ of length $3+\sum_{n=1}^kF_{2n+1}=F_{2k+2}+2$ is $p_k=(00101)^2\cdot\prod_{n=3}^k\ell_{2n+1}$.
    
    Now, the inflated infinite word $s'=010f$ is still Sturmian, so that its prefixes are all digitally convex; in particular, for all $k>1$,
    the Lyndon factorization of its prefix $p_k'$ such that $|p_k'|=|p_k|$ is $p_k'=01\cdot \ell_{2k+2}$, thus showing that an arbitrarily large number of elements in the Lyndon factorization of a digitally convex word can give rise to a constant number (2, in this case) of such elements after inflation.
\end{example}

In the next lemma we analyze how the inflation operation acts on a single Christoffel word.

\begin{lemma}
    \label{lem:inflat}
    Let $w=V01U$ be a primitive lower Christoffel word. Then $V10U$ is digitally convex if and only if $w=0UV1$, i.e., $(V0,1U)$ is the palindromic factorization of $w$ and $(0U,V1)$ is its standard factorization.
\end{lemma}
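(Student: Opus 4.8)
\emph{Plan.} I will prove the two implications separately. The ``if'' direction is a short explicit computation of a Lyndon factorization; the ``only if'' direction I reduce, via the deflation theorem (Theorem~\ref{thm:defl}), to two statements about the prefix $V0$ and the suffix $1U$ of $w$, each controlled by a palindromicity criterion whose forward direction is the real obstacle.

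\emph{Sufficiency.} Assume $w=0UV1$. Then $(0U,V1)$ is the standard factorization of $w$, so $0U$ and $V1$ are lower Christoffel words and, since $w=(0U)(V1)$ is Lyndon, Property~\ref{propLyn} gives $0U<V1$. Consequently $V10U=(V1)(0U)$ is a concatenation of two balanced Lyndon words in strictly decreasing order, hence is already its own Lyndon factorization into balanced factors; by Theorem~\ref{thm:digcon} it is digitally convex. The degenerate cases $w=0^{n+1}1$ and $w=01^{n+1}$ (where $C$ is a power of a letter) fit the same computation, the unique occurrence of $01$ being the palindromic one.

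\emph{Necessity.} Suppose $V10U$ is digitally convex. Deflating its distinguished occurrence of $10$ yields $w=V01U$, a primitive lower Christoffel word and thus digitally convex. Its Lyndon factorization has at least two factors (a single Christoffel factor would contradict Lemma~\ref{lem:bad-defl}), so Theorem~\ref{thm:defl} applies and forces $0U=w_i\cdots w_k$ to be a product of trailing Lyndon factors, with $V1=w_1\cdots w_{i-1}$ the complementary leading product; in particular both $V1$ and $0U$ are digitally convex, and $w=(V0)(1U)$ is the splitting of $w$ into the prefix $V0$ and suffix $1U$. The key is the following criterion: \emph{for any proper prefix $X0$ of a primitive lower Christoffel word that ends in $0$ and is followed by $1$, the word $X1$ is digitally convex if and only if $X0$ is a palindrome.} Granting it, the reversal–complement involution $\rho(\cdot)=\overline{\widetilde{\,\cdot\,}}$ sends $w$ to a primitive lower Christoffel word $\rho(w)=0\overline{C}1$, carries the suffix $1U$ to a prefix $X0:=\rho(1U)$ ending in $0$ and followed by $1$, and sends $0U$ to $X1=\rho(0U)$. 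Since $\rho$ preserves (upward) digital convexity by Proposition~\ref{prop:convud} and preserves palindromicity, the criterion gives: $0U$ is digitally convex iff $1U$ is a palindrome. Hence $V0$ and $1U$ are both palindromes, so $w=(V0)(1U)$ is a product of two nonempty palindromes; as $w$ is primitive, Proposition~\ref{prop:2pal} makes this decomposition unique, so it is the palindromic factorization $0P0\cdot 1Q1$, giving $(V0,1U)=(0P0,1Q1)$, i.e.\ $w=0UV1$.

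\emph{The criterion, and the main obstacle.} The implication $(\Leftarrow)$ is immediate: a palindromic prefix of $0C1$ ending in $0$ has the form $0D0$ with $D$ a palindromic prefix of the central word $C$, hence $D$ is itself central, so $X1=0D1$ is a primitive lower Christoffel word and thus digitally convex. For $(\Rightarrow)$ I would argue by contraposition, reducing to slope $<1$ by the complement symmetry. Flipping the final $0$ of $X0$ to $1$, the resulting suffix $0^{t-1}1$ is forced to merge leftward in the Lyndon factorization: every lower Christoffel factor of the balanced word $X0$ begins with the \emph{maximal} $0$-run $0^{m+1}$ of $w$, whereas $0^{t-1}1$ begins with a strictly shorter run, and the only alternative — an isolated short factor $0^{m}1$ — would make $w$ begin with a short run, contradicting that $w$ is Lyndon. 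Thus $X1$ is digitally convex iff the merged trailing Lyndon factor is balanced, and I expect to show — by induction through the Christoffel morphism $\varphi_m$ (as in Lemma~\ref{lem:good-defl}) together with Theorem~\ref{thm:CSCW} — that this balanced merge happens exactly when $X1$ collapses to a single Christoffel word $0D1$, i.e.\ when $X0=0D0$ is a palindrome. Ruling out an ``early'' merge stopping at a balanced non-maximal factor while $X0$ is not a palindrome is the delicate step, and it is the crux of the whole argument.
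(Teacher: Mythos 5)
Your \emph{sufficiency} half is correct and agrees with the paper's computation: when the marked $01$ sits at the palindromic point, $V10U=(V1)(0U)=(0P1)(0Q1)$ is a strictly decreasing product of two primitive lower Christoffel words, hence its own Lyndon factorization into balanced factors, hence digitally convex by Theorem~\ref{thm:digcon}. Your reduction of the \emph{necessity} half is also a legitimate (and genuinely different) idea: using Lemma~\ref{lem:bad-defl} to rule out $k=1$ and then Theorem~\ref{thm:defl} to split the Lyndon factorization of $V10U$ at the deflation point does show that $V1$ and $0U$ are both digitally convex, and the reversal--complement involution correctly transfers the suffix case to the prefix case. The problem is that everything then rests on your ``criterion'' --- for a proper prefix $X0$ of $w$ ending in $0$ and followed by $1$, the word $X1$ is digitally convex only if $X0$ is a palindrome --- and you prove only its easy converse. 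The forward implication is not a routine verification: it is essentially the entire content of the hard direction of the lemma (it says the unique inflatable point is the palindromic one, read off on prefixes), and your sketch stops exactly at the difficult case, namely excluding that the trailing Lyndon factors of $X1$ merge into a balanced word while $X0$ fails to be a palindrome. You say so yourself; as written, the necessity direction is not proved.

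For comparison, the paper closes this by inducting on the standard factorization $w=uv$: every occurrence of $01$ lies wholly inside $u$ or inside $v$; the one good occurrence inside $v$ is identified by induction with the palindromic point, and the inflated word is then the conjugate $vu$; for any occurrence inside $u$, writing $v=u^nv_n$ and $u=u_1v_n$, the inflated word contains the factor $u_1(u_1v_n)^nv_n$, a minimal forbidden word of $\DC$ by Theorem~\ref{thm:maw-Prov}, so convexity fails. If you want to keep your architecture, the most economical repair is to prove your criterion by the same device --- induct on the standard factorization and exhibit an explicit forbidden factor from Theorem~\ref{thm:maw-Prov} (or Theorem~\ref{thm:maw-dc}) inside $X1$ whenever $X0$ is not a palindrome --- rather than trying to control the merging of Lyndon factors through the morphism $\varphi_m$ directly.
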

\begin{proof}
    By induction on $|w|$. If $w=01$, so that $U=V=\epsilon$, then $10$ is digitally convex and there is nothing to prove.

    Let then $|w|>2$, and $w=uv$ be its standard factorization. As $w\neq 01$, either $u$ ends with $1$, or $v$ begins with $0$, or both; therefore, any occurrence of $01$ in $w$ lies completely within $u$ or within $v$. Moreover, either $u$ is a prefix of $v$, or $v$ is a suffix of $u$.
    Let us assume the former case holds, as the latter one is similar; let then $v=uv_1$ for some primitive Christoffel word $v_1$. By induction, if $v=V'01U'$ and $V'10U'$ is digitally convex, then $0U'=u$ and $V'1=v_1$. Thus, if $V=uV'$ and $U=U'$, then $V10U=uv_1u=vu$ is digitally convex.
    
    It remains to show that no occurrence of $01$ within $u$ works, i.e., that if $|V01|\leq |u|$, then $V10U$ is not digitally convex. Indeed, let $n\geq 1$ be maximal such that $u^n$ is a prefix of $v$, and write $v=u^nv_n$ for some primitive Christoffel word $v_n$. If $uv_n=01$, then clearly $u=0$ has no occurrences of $01$; otherwise, by maximality of $n$, $v_n$ must be a suffix of $u$, so that $u$ has standard factorization $u=u_1v_n$ for some $u_1$. By induction, if $u=V'01U'$ and $V'10U'$ is digitally convex, then $0U'=u_1$ and $V'1=v_n$. Hence, for $V=V'$ and $U=U'v$, the word
    \[V10U=v_nu_1v=v_nu_1u^nv_n=v_nu_1(u_1v_n)^nv_n\]
    is not digitally convex by Theorem~\ref{thm:maw-Prov}, as $u=u_1v_n$ is a Christoffel word and $n\geq 1$.
\qed \end{proof}

The previous lemma essentially states that the only possible inflation point in a Christoffel word is the point $S'$ determined by the palindromic factorization, which is the  point on the path encoding the Christoffel word that is the farthest from the Euclidean segment (see Fig.~\ref{fig:dec}).
For example, let $w=0010010\mathbf{01}01=001\cdot 00100101$. Then the inflated word is $0010010\mathbf{10}01=00100101\cdot 001$. 

A deeper investigation reveals that the inflation operation performed as in  Lemma~\ref{lem:inflat}  cannot be applied to any Christoffel factor of a digitally convex word while preserving the digital  convexity property. The following example is from \cite{DBLP:conf/cwords/DulioFRTV17}.

\begin{example}\label{ex:bad_inflation}
Let $w$ be the digitally convex word $w=w_{5,3}w_{20,11}$. The application of the inflation to $w_{5,3}$, according to Lemma~\ref{lem:inflat}, produces $w'=w_{3,2}w_{2,1}w_{20,11}$ that is not digitally convex any more. In order to gain back the digital convexity, it is required a second inflation in $w_{20,11}$, so obtaining $w''=w_{3,2}w_{2,1}w_{9,5}w_{11,6}$. Note that $w_{2,1}w_{9,5}$ is the standard factorization of $=w_{11,6}$; so, finally,  $w''=w_{3,2}w_{11,6}w_{11,6}$. 
\end{example}

\begin{remark}
    Example~\ref{ex:bad_inflation} suggests that an order on the inflation operations can be established to avoid the loss of the digital convexity property. In fact, performing on $w$ a first inflation in its Christoffel factor $w_{20,11}$ produces the digitally convex word $w_{5,3}w_{9,5}w_{11,6}$. Now, the second inflation on $w_{5,3}$ leads to $w''$.  
\end{remark}

The following lemma expresses what observed in the previous remark.

\begin{lemma}
    \label{lem:infla-dc}
    Let $w=w_1\dots w_k$ be a digitally convex word.  where $w_1 >\cdots >w_k$ ($k\geq 2$) are Christoffel words. There exists an index $1\leq i \leq k$ such that $w_i$ has palindromic factorization $(u0,1v)$ and $w'=w_1\dots w_{i-1}\: u10v \: w_{i+1}\dots w_k$ is digitally convex. 
\end{lemma}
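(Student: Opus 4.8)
The plan is to reduce the statement to a purely arithmetic ``fitting'' condition on slopes, and then to locate a good index by a maximality argument. First I would group the Lyndon factorization of $w$ into maximal runs of equal factors, writing $w=L_1^{e_1}\cdots L_k^{e_k}$ with $L_1>\cdots>L_k$ distinct primitive lower Christoffel words; these runs are exactly the Christoffel words $w_i=L_i^{e_i}$ of the statement, and their slopes $s_1>\cdots>s_k$ are strictly decreasing (distinct primitive Christoffel words have distinct slopes, by Theorem~\ref{thm:slopes}). For each $i$ let $L_i=B_iA_i$ be the standard factorization of its primitive root; by the correspondence between standard and palindromic factorizations recalled before Fig.~\ref{fig:dec}, inflating $L_i$ at its palindromic point produces $A_iB_i$ (Lemma~\ref{lem:inflat}), where $A_i,B_i$ are the Stern--Brocot parents of $L_i$, with slopes $b_i<s_i<a_i$.

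Next I would make precise how inflation acts on a whole block. Taking the palindromic factorization of $w_i$ at its first period, i.e.\ $w_i=(u0)(1v)$ with $u0=0P_i0$ the leading palindrome of $L_i$, the inflated block is \[u10v=A_i\,(B_iL_i^{e_i-1}).\] Using Property~\ref{propLyn} together with the fact that consecutive Stern--Brocot neighbours concatenate (lower slope first) into the Christoffel word of their mediant, I would prove by induction on $e_i$ that $B_iL_i^{e_i-1}$ is a single Christoffel word of slope in $(b_i,s_i)$. Hence the inflated block has Lyndon factorization $\bigl(A_i,\,B_iL_i^{e_i-1}\bigr)$, both factors Christoffel, with all slopes confined to $[b_i,a_i]$. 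By Theorems~\ref{thm:digcon} and~\ref{thm:slopes} the word $w'=w_1\cdots w_{i-1}\,A_i\,(B_iL_i^{e_i-1})\,w_{i+1}\cdots w_k$ is then digitally convex \emph{provided} $a_i\le s_{i-1}$ (when $i>1$) and $s_{i+1}\le b_i$ (when $i<k$), since under these inequalities the slopes along $w'$ are non-increasing and every factor is balanced.

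The core of the argument is to find an index satisfying both inequalities. Here I would invoke the Farey property of the parents: $B_i$ and $A_i$ are Farey neighbours whose (reduced) denominators sum to $q_i:=|L_i|_0$, so every rational strictly between $b_i$ and $a_i$ other than the mediant $s_i$ has reduced denominator strictly greater than $q_i$. Consequently, if a neighbouring slope $s_{i-1}$ (resp.\ $s_{i+1}$) lay in the open interval $(b_i,a_i)$, then $q_{i-1}>q_i$ (resp.\ $q_{i+1}>q_i$). Choosing $m$ to maximize $q_m$, neither neighbour can lie in $(b_m,a_m)$; since $s_{m-1}>s_m>b_m$ and $s_{m+1}<s_m<a_m$, this forces $s_{m-1}\ge a_m$ and $s_{m+1}\le b_m$, which are exactly the two inequalities needed. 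Thus $m$ is a valid inflation index, witnessed by the palindromic factorization of $w_m$ described above.

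I expect the main obstacle to be the block-level bookkeeping of the middle step: showing that $B_iL_i^{e_i-1}$ really collapses to one Christoffel word of slope in $(b_i,s_i)$, so that inflation creates the slope $a_i$ on the left and nothing below $b_i$ on the right, keeping the whole inflated block inside $[b_i,a_i]$. This is precisely where the Stern--Brocot structure of standard factorizations must be used carefully; the subsequent maximality argument and the boundary cases $m=1,k$ (where one inequality is vacuous) are then routine. As a sanity check, in Example~\ref{ex:bad_inflation} the maximal denominator is $q=20$, attained by $w_{20,11}$, and inflating that block yields $w_{5,3}w_{9,5}w_{11,6}$ — exactly the digitally convex word singled out in the following remark.
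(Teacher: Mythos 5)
Your approach is genuinely different from the paper's. The paper argues by contradiction: assuming that every inflation at a palindromic point destroys convexity, it introduces an implication relation ``inflating $w_i$ forces inflating $w_j$'', extracts a cycle from the finiteness of the factorization, and derives a geometric contradiction with the convexity of $w$. Your argument is instead constructive and quantitative: you reduce the statement to the two slope inequalities $s_{m-1}\ge a_m$ and $s_{m+1}\le b_m$ and then locate a valid index by an extremality argument on the Stern--Brocot structure. The block-level bookkeeping is sound: $B_iL_i^{e_i-1}$ is indeed a primitive Christoffel word of slope in $[b_i,s_i)$ (one left step followed by right steps in the Christoffel tree), the word $1Q1L_i^{e_i-1}$ is a palindrome, so your inflation point really is a palindromic factorization of the block $w_i=L_i^{e_i}$, and the two inequalities do make the resulting factorization non-increasing, hence $w'$ digitally convex by Theorems~\ref{thm:digcon} and~\ref{thm:slopes}. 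This buys an explicit rule for choosing the index, which the paper's cycle argument does not provide.

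There is, however, a genuine flaw in the Farey step. The claim that every rational strictly between $b_i$ and $a_i$ other than the mediant $s_i$ has reduced denominator strictly greater than $q_i=|L_i|_0$ fails when $a_i=\infty$, i.e.\ when $L_i=01^n$ (so $A_i=1$ has slope $1/0$ and the denominators of the parents sum to $q_i=1$): the interval $(b_i,\infty)$ then contains integer slopes of denominator $1$. Concretely, for $w=011\cdot 01$ both blocks have $q_i=1$, so your maximality rule may select $m=2$, for which $a_2=\infty$ and the needed inequality $s_1\ge a_2$ is false; the inflation at $m=2$ does still produce a digitally convex word, but only because the created factor $1$ merges with the preceding $01^j$, a mechanism your proof does not cover. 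The repair is to measure blocks by length rather than by denominator: if $d/c<f/e$ are Farey neighbours and $d/c<p/q<f/e$ with $\gcd(p,q)=1$, then setting $\alpha=pc-qd\ge 1$ and $\beta=qf-pe\ge 1$ one gets $p=d\beta+f\alpha$ and $q=c\beta+e\alpha$, hence $p+q=(c+d)\beta+(e+f)\alpha\ge |B_i|+|A_i|=|L_i|$ with equality if and only if $p/q$ is the mediant; this holds uniformly, including the degenerate cases $b_i=0$ and $a_i=\infty$. Choosing $m$ to maximize $|L_m|$ then makes your extremality argument go through verbatim (and still selects $w_{20,11}$ in Example~\ref{ex:bad_inflation}).
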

\begin{proof}
  Let us proceed by contradiction assuming that no such index $i$ exists. This implies that each possible inflation on the $S'$ point of a Christoffel factor $w_i$ produces a non-digitally convex word. We use the implication $i\rightarrow j$ each time the inflation of the $S'$ point of $w_i$ requires at least the inflation of the $S'$ point of $w_j$ to gain back the digital convexity property of the generated word. By hypothesis and by the finiteness of the Christoffel factors of $w$, there exists at least a cycle of implications. Let us consider two indexes $i$ and $j$ of the cycle, with $i<j$. It holds that $i \xrightarrow{*} j$ and $j\xrightarrow{*} i$, where $\xrightarrow{*}$ indicates the transitive closure of $\rightarrow$. Let $u$ (resp.~$v$) be the convex hull generated by the applications of inflation on $w_i$ (resp.~$w_j$). It holds that the Christoffel factor of $u$ starting from the $S'$ point of $w_i$ and ending at the beginning of a Christoffel factor, possibly empty, $w_t$ of $w$, with $t>j$, includes the $S'$ point of $w_j$. Symmetrically the Christoffel factor of $v$ starting at the end of a Christoffel factor $w_s$, possibly empty, of $w$, with $s\leq i$, and ending in the $S'$ point of $w_j$ includes the $S'$ point of $w_i$. As a consequence, the convex hull including $w_t$ and $w_s$ must include the $S'$ points of $w_i$ and $w_j$ against the digital convexity of $w$.    
  \qed \end{proof}

\begin{proposition}\label{prop:deflation}
    Let $w$ be a digitally convex word with Parikh vector $(a,b)$. There exists a sequence of applications of inflation that leads from the word $w_{a,b}$ to $w$.   
\end{proposition}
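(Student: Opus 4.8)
The plan is to exploit the fact that inflation and deflation are mutually inverse operations (inflation replaces an occurrence of $01$ by $10$, deflation replaces $10$ by $01$), so that it suffices to produce a sequence of \emph{deflations} leading from $w$ down to $w_{a,b}$; reversing this sequence then yields the desired sequence of inflations. Concretely, if $w=z_0\to z_1\to\cdots\to z_r=w_{a,b}$ is a chain of deflations through digitally convex words, then each reversed step $z_{j+1}\to z_j$ replaces a $01$ by $10$ and lands on the digitally convex word $z_j$, hence is a legal inflation; thus $w_{a,b}=z_r\to\cdots\to z_0=w$ is the sought inflation sequence.

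First I would observe that deflation strictly decreases a word in the dominance order. If $w'$ is obtained from $w=u10v$ by replacing the displayed $10$ with $01$, then the associated sequences satisfy $s_{w'}[i]=s_w[i]$ for every $i$ except at the position of the moved $1$, where $s_{w'}$ drops by exactly $1$. Hence the integer $m(w):=\sum_i s_w[i]$ decreases by exactly $1$ at each deflation. Since $w_{a,b}$ is dominance-minimal among digitally convex words of Parikh vector $(a,b)$ by Corollary~\ref{prop:chrisdc} and Theorem~\ref{thm:dura2}, we have $m(w_{a,b})\leq m(w')$ for every such $w'$, so $m$ is bounded below and any sequence of deflations must terminate after at most $m(w)-m(w_{a,b})$ steps.

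The crux is to show that every digitally convex word $w\neq w_{a,b}$ admits at least one valid deflation. Write $w=w_1\cdots w_k$ with $w_1>\cdots>w_k$ the Christoffel factorization as in Theorem~\ref{thm:defl}. A single factor ($k=1$) would force $w$ to be a power of a primitive lower Christoffel word of Parikh vector $(a,b)$, i.e.\ $w=w_{a,b}$; thus $w\neq w_{a,b}$ gives $k\geq 2$. Assuming $a,b>0$ (the cases $a=0$ or $b=0$ give $w=w_{a,b}$ and an empty sequence), the top factor $w_1$ has positive slope and so ends with $1$, while $w_2$ has slope $<\infty$ and so begins with $0$; hence $w$ contains an occurrence of $10$ at the $w_1\mid w_2$ boundary. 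Writing $w_1=u1$ and $0v=w_2\cdots w_k$, we have $w=u10v$ with $0v=w_i\cdots w_k$ for $i=2$, so Theorem~\ref{thm:defl} guarantees that $u01v$ is digitally convex. A valid deflation therefore always exists.

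Combining the two points gives the result: starting from $w$ and repeatedly applying \emph{any} valid deflation, the quantity $m$ strictly decreases, so after finitely many steps we reach a digitally convex word admitting no valid deflation; by the previous paragraph this terminal word must be $w_{a,b}$. Reversing this deflation chain produces the required sequence of inflations from $w_{a,b}$ to $w$. The main obstacle is precisely the existence of a valid deflation for every $w\neq w_{a,b}$, which I resolve by always deflating at the boundary between the two highest Christoffel factors and invoking Theorem~\ref{thm:defl} with $i=2$; everything else is a termination argument on the monovariant $m$.
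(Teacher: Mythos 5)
Your proof is correct and follows essentially the same route as the paper's: repeatedly deflate $w$ at a boundary between consecutive Christoffel factors of its Lyndon factorization until $w_{a,b}$ is reached, then reverse the chain to obtain the inflation sequence. You are in fact somewhat more careful than the paper, supplying an explicit monovariant $m(w)=\sum_i s_w[i]$ for termination and an explicit argument (via Theorem~\ref{thm:defl} with $i=2$) that every digitally convex word other than $w_{a,b}$ admits a valid deflation.
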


\begin{proof}
Let $w_1\cdots w_k$ be the Lyndon factorization of $w$, where $w_1 >\cdots >w_k$ ($k\geq 2$) are Christoffel words. Lemma~\ref{lem:good-defl} assures that each deflation between two consecutive words $w_i w_{i+1}$, with $1\leq i \leq k-1$, produces a digitally convex word. Choosing randomly a deflation point and iterating the process, after a finite number of steps we obtain a digitally convex word with Parikh vector $(a,b)$ and whose Lyndon decomposition has one single Christoffel word, i.e., $w_{a,b}$. Reversing the applications of deflation steps from $w_{a,b}$ to $w$ we obtain the desired sequence of applications of inflation.    
\qed \end{proof}

\begin{proposition}\label{prop:inflation}
    Let $w$ be a digitally convex word with Parikh vector $(a,b)$. There exists a sequence of applications of deflation that leads from the digitally convex word $1^b0^a$ to $w$.   
\end{proposition}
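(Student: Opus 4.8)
The plan is to prove the statement as the exact dual of Proposition~\ref{prop:deflation}: rather than climbing \emph{up} from the minimal digitally convex word $w_{a,b}$ by inflations, I would climb up from $w$ to the \emph{maximal} digitally convex word $1^b0^a$ by inflations, and then read that chain backwards as a sequence of deflations. (We may assume $a,b\ge 1$, since if $a=0$ or $b=0$ then $w=1^b0^a$ already and the empty sequence works.) The first step is to record that $1^b0^a$ is the maximum of the set $\DC_{a,b}$ of digitally convex words with Parikh vector $(a,b)$ for the dominance order: any word $u$ with Parikh vector $(a,b)$ satisfies $|u[1..i]|_1\le\min(i,b)=|1^b0^a[1..i]|_1$ for all $i$, so $u\sqsubseteq 1^b0^a$, and $1^b0^a$ is digitally convex because its Lyndon factorization $1\cdots1\,0\cdots0$ consists of trivial Christoffel words (Theorem~\ref{thm:digcon}). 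Since an inflation replaces an occurrence of $01$ by $10$, it raises exactly one prefix-sum $|\,\cdot\,[1..i]|_1$ by one and leaves the others fixed, hence strictly increases the word in the dominance order; moreover inflation and deflation are inverse bit-operations, so any chain $w=y_0\sqsubset y_1\sqsubset\cdots\sqsubset y_t=1^b0^a$ of digitally convex words in which each step is an inflation yields, read backwards, a valid sequence of deflations from $1^b0^a$ to $w$.

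The engine of the argument is the claim that \emph{every digitally convex word $y$ with Parikh vector $(a,b)$ other than $1^b0^a$ admits an inflation to a digitally convex word}. To see this, write the factorization $y=w_1\cdots w_k$ into strictly decreasing Christoffel words $w_1>\cdots>w_k$ (grouping equal Lyndon factors into powers). If $k\ge 2$, then since the only Christoffel words that are powers of a single letter are $1^c$ and $0^d$, having \emph{all} factors trivial would force $y=1^b0^a$; hence at least one factor is non-trivial and admits a palindromic factorization $(u0,1v)$, and Lemma~\ref{lem:infla-dc} then supplies an index $i$ at which the inflation $w_1\cdots w_{i-1}\,u10v\,w_{i+1}\cdots w_k$ is digitally convex. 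If $k=1$, then $y$ is a single power of a primitive lower Christoffel word, i.e.\ the Christoffel word $w_{a,b}$ itself; that $w_{a,b}$ admits an inflation to a digitally convex word is immediate from Proposition~\ref{prop:deflation} (take the first step of any inflation sequence leading from $w_{a,b}$ to $1^b0^a$), and alternatively from Lemma~\ref{lem:inflat} in the coprime case.

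With the claim established, I would finish by finiteness. Starting from $y_0=w$, repeatedly inflate: as long as the current word differs from $1^b0^a$ the claim guarantees an inflation point, so we may pass to a strictly larger digitally convex word of $\DC_{a,b}$. Because $\DC_{a,b}$ is finite and each inflation strictly increases the word in the dominance order, the process terminates; and it can only terminate at a word admitting no inflation, which by the claim is precisely $1^b0^a$. This yields the strictly increasing chain $w=y_0\sqsubset\cdots\sqsubset y_t=1^b0^a$, whose reversal is a sequence of deflations from $1^b0^a$ to $w$, each producing a digitally convex word (consistent with Theorem~\ref{thm:defl}).

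I expect the main obstacle to be the claim, and specifically guaranteeing that an inflation is available at \emph{every} non-maximal digitally convex word. Lemma~\ref{lem:infla-dc} is phrased for a strictly decreasing Christoffel factorization with $k\ge2$ and tacitly needs a non-trivial factor, so the delicate points are (a) verifying that $y\neq 1^b0^a$ really forces a non-trivial factor, and (b) disposing of the base case $y=w_{a,b}$ — in particular the non-coprime case, where $w_{a,b}$ is a proper power and neither Lemma~\ref{lem:infla-dc} nor the primitive form of Lemma~\ref{lem:inflat} applies verbatim, which is why I would route it through Proposition~\ref{prop:deflation}. Once these are handled, the termination and reversal of the chain are routine.
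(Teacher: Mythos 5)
Your proposal is correct and follows essentially the same route as the paper: repeatedly apply an inflation that preserves digital convexity (the existence of which is guaranteed by Lemma~\ref{lem:infla-dc}) until $1^b0^a$ is reached, then read the chain backwards as a sequence of deflations. In fact your write-up is more careful than the paper's own proof, which tacitly assumes $k\geq 2$, does not separately treat the case $w=w_{a,b}$, and leaves termination implicit; your use of the dominance order as a strictly increasing measure, your check that only $1^b0^a$ has all Christoffel factors trivial, and your detour through Proposition~\ref{prop:deflation} for the $k=1$ case close those gaps cleanly.
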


\begin{proof}
  Acting as in Proposition~\ref{prop:deflation}, let $w_1\cdots w_k$ be the Lyndon factorization of $w$, where $w_1 >\cdots >w_k$ ($k\geq 2$) are Christoffel words. Lemma~\ref{lem:inflat} assures that there exists an inflation in $w$ that generates a digitally convex word $w'$. Iterating the process, after a finite number of steps we obtain the digitally convex word $1^b0^a$,  with Parikh vector $(a,b)$ and where no $01$ sequences are present. Reversing the applications of inflation steps from $1^b0^a$ to $w$ we obtain the desired sequence of applications of deflation. 
\qed \end{proof}

We therefore arrive at the main result of this section, whose proof directly follows from Propositions~\ref{prop:deflation} and \ref{prop:inflation}.

\begin{theorem}\label{teo:infdef}
Iterated inflation (resp.~deflation) in the word $w_{a,b}$ (resp.~$1^b0^a$) produces all digitally convex words with the same Parikh vector $(a,b)$.
\end{theorem}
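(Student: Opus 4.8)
The plan is to read the statement as the assertion that two sets coincide. Fix positive integers $a,b$ and recall that $\DC_{a,b}$ denotes the set of digitally convex words with Parikh vector $(a,b)$. For the inflation half I would introduce $I$, the set of all words obtainable from $w_{a,b}$ by a finite sequence of inflations, and prove $I=\DC_{a,b}$; the deflation half is then entirely symmetric, replacing $w_{a,b}$ by $1^b0^a$ and inflation by deflation.

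The inclusion $I\subseteq\DC_{a,b}$ is immediate from the definitions and needs only an induction on the number of inflation steps. The base word $w_{a,b}$ is a lower Christoffel word, hence digitally convex, with Parikh vector $(a,b)$. A single inflation replaces an occurrence of $01$ by $10$, which leaves the Parikh vector unchanged, and by definition it is applied only when it preserves digital convexity; thus each elementary step keeps us inside $\DC_{a,b}$, and so does any finite iteration. For the reverse inclusion $\DC_{a,b}\subseteq I$ I would invoke Proposition~\ref{prop:deflation} verbatim: it states precisely that every digitally convex word $w$ with Parikh vector $(a,b)$ can be reached from $w_{a,b}$ by a sequence of inflations, i.e.\ $w\in I$. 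Combining the two inclusions yields $I=\DC_{a,b}$, which is the first assertion. The deflation assertion follows in the same way: the easy inclusion is again immediate, since deflation preserves the Parikh vector and, by definition, digital convexity, and $1^b0^a$ lies in $\DC_{a,b}$, while the reverse inclusion is supplied by Proposition~\ref{prop:inflation}.

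I do not expect a substantial obstacle at this level, since all the real work has already been carried out in the preceding lemmas (notably Lemmas~\ref{lem:good-defl} and~\ref{lem:inflat}, which certify that the elementary moves stay within the class) and in Propositions~\ref{prop:deflation} and~\ref{prop:inflation}, which guarantee reachability in both directions. The only things to verify are the trivial ones—that the two starting words $w_{a,b}$ and $1^b0^a$ belong to $\DC_{a,b}$, and that an elementary inflation or deflation preserves the Parikh vector—so the theorem is essentially a repackaging of the two propositions as a single set-equality statement, and its proof is correspondingly short.
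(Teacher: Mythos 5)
Your proposal is correct and follows essentially the same route as the paper, which derives the theorem directly from Propositions~\ref{prop:deflation} and~\ref{prop:inflation}; you merely make explicit the trivial forward inclusions (that inflation/deflation preserve the Parikh vector and, by definition, digital convexity), which the paper leaves implicit.
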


Now, we are ready to define the order relation $\leq_I$ on the set $\DC_{a,b}$ of all the digitally convex words with Parikh vector $(a,b)$ such that, provided $u,v \in \DC_{a,b}$, $u\leq_I v$ if there exists a sequence of applications of the inflation operation leading from $u$ to $v$. It is worthwhile proving that the structure $\mathcal{W}_{a,b}=(\DC_{a,b},\leq_I)$ is a partial order, having the words $1^b0^a$ and $w_{a,b}$ as maximum and minimum element, respectively.

Let us indicate the partial order provided by the dominance order on the same set $\DC_{a,b}$ as $\DOM_{a,b}=(\DC_{a,b},\sqsubseteq)$.

\begin{theorem}
The partial orders $\mathcal{W}_{a,b}$ and $\mathcal{D}_{a,b}$ define the same structure on the ground set $\DC_{a,b}$. 
\end{theorem}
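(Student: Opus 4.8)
The plan is to show that the two orders coincide as relations by proving the two inclusions $\leq_I\,\subseteq\,\sqsubseteq$ and $\sqsubseteq\,\subseteq\,\leq_I$. The first inclusion is the easy half: a single inflation turns some $u01v$ into $u10v$, and comparing the associated integer sequences one sees that $s_{u10v}$ agrees with $s_{u01v}$ except at the single index $j=|u|+1$, where it is larger by exactly $1$. Hence one inflation step strictly increases $\sqsubseteq$ and raises the rank $r(w):=\sum_i s_w[i]$ by exactly $1$; by transitivity any chain of inflations from $p$ to $q$ yields $p\sqsubseteq q$, so $p\leq_I q$ implies $p\sqsubseteq q$.

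For the reverse inclusion I would argue by induction on the rank gap $r(v)-r(u)\ge 0$, using the equivalent \emph{deflation} formulation: a deflation $v\to v'$ is the reverse of an inflation $v'\to v$, so a deflation chain from $v$ down to $u$ reverses to an inflation chain from $u$ up to $v$. The whole content then reduces to the following key lemma: if $u\sqsubsetneq v$ are digitally convex with the same Parikh vector, then $v$ admits a \emph{valid} deflation $v\to v'$ with $u\sqsubseteq v'$. Granting this, $r(v')=r(v)-1$ while still $u\sqsubseteq v'$, so the induction closes and gives $u\sqsubseteq v\Rightarrow u\leq_I v$, completing the equality of the two orders (both already sharing minimum $w_{a,b}$ and maximum $1^b0^a$ by Theorem~\ref{teo:infdef} and Theorem~\ref{thm:dura2}).

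To prove the key lemma, observe that a deflation at step-position $j$ lowers $s_v[j]$ by $1$ and leaves every other value unchanged; thus it keeps $v'\sqsupseteq u$ precisely when $s_v[j]>s_u[j]$. By Theorem~\ref{thm:defl}, the admissible deflation positions include all junctions between two consecutive Lyndon/Christoffel factors of strictly decreasing slope — i.e.\ the convex vertices of $v$ — and Lemma~\ref{lem:good-defl} guarantees the resulting word is digitally convex. So it suffices to find one such vertex $j$ with $s_v[j]>s_u[j]$, which I would establish by dichotomy: either some vertex $j$ satisfies $s_v[j]>s_u[j]$, and we are done, or $s_u[j]=s_v[j]$ at every vertex. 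In the latter case $u$ and $v$ pass through exactly the same lattice points at all vertices of $v$; between two consecutive such vertices $v$ is a (possibly non-primitive) lower Christoffel word $w_{\alpha,\beta}$, which by Theorem~\ref{thm:dura2} (see also Corollary~\ref{prop:chrisdc}) is the $\sqsubseteq$-minimum among all digitally convex words with Parikh vector $(\alpha,\beta)$. Since the corresponding factor of $u$ is itself digitally convex by Proposition~\ref{prop:digconvfact}, has the same Parikh vector, and is $\sqsubseteq$ that of $v$, it must equal it; as this holds on every segment, $u=v$, contradicting $u\sqsubsetneq v$.

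The main obstacle is exactly this key lemma, and within it the verification that a \emph{valid} deflation can always be taken at a convex vertex lying strictly above $u$: the subtlety is that not every descent of the path is an admissible deflation, so one must locate an admissible one inside the region where $u$ and $v$ differ. The clean way around it is the minimality of Christoffel words in the dominance order (Theorem~\ref{thm:dura2}), which turns ``equality at all vertices'' into ``equality everywhere'' and thereby forces the existence of a usable vertex whenever $u\neq v$. The remaining bookkeeping — translating between step-indices and lattice points, and checking that the reversed deflation chain is a legitimate sequence of inflation operations — is routine and uses only the definitions together with Theorem~\ref{teo:infdef}.
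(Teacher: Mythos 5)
Your proposal is correct, but it takes a genuinely different route from the paper's. The paper argues at the level of cover relations: it shows that a single inflation raises the dominance order, and conversely that if $v$ covers $u$ in the dominance order then the two words must differ in exactly two adjacent positions (any non-adjacent pair of differing positions would allow an intermediate word), hence $v$ is obtained from $u$ by one inflation; transitivity then concludes. You instead prove the inclusion $\sqsubseteq\,\subseteq\,\leq_I$ directly by induction on the area $r(v)-r(u)$ between the two lattice paths, reducing everything to the key lemma that whenever $u\sqsubseteq v$ with $u\neq v$, the word $v$ admits an admissible deflation remaining above $u$. Your dichotomy for that lemma is sound: Theorem~\ref{thm:defl} guarantees that every junction between consecutive (strictly decreasing) Christoffel factors of $v$ is a valid deflation site, and if all these junctions already agree with $u$, then on each inter-junction segment the factor of $v$ is a Christoffel word, the corresponding factor of $u$ is digitally convex with the same Parikh vector (Proposition~\ref{prop:digconvfact}) and dominated by it, so the dominance-minimality of Christoffel words (Theorem~\ref{thm:dura2}) forces equality segment by segment, hence $u=v$. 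What your approach buys is robustness: the paper's treatment of dominance covers tacitly assumes both that a cover differs in exactly two positions and that the intermediate words it constructs are again digitally convex, neither of which is verified there; your induction never needs to characterize covers at all. The cost is a longer argument that leans on the full strength of Theorem~\ref{thm:defl} and Theorem~\ref{thm:dura2}, whereas the paper's intended argument is essentially local and elementary.
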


\begin{proof}
Let $u$ and $v$ be two digitally convex words in $W_{a,b}$. Assume that $v$ covers $u$ in the $\leq_I$ order, i.e., $v$ is obtained by $u$ with a single inflation performed, w.l.g., on the Christoffel factor $w_i=w'01w''$ of $u=w_1\dots w_i \dots w_k$. The word $v$ turns out to be $w_1 \dots w'10w'' \dots w_k$ and it holds $u\leq_D v$.

Now, assume that $u$ covers $v$ in the $\leq_D$ order, so $u$ and $v$ differ in two indexes only, say $i$ and $j$, with $i<j$. By definition of dominance order, it holds $u[i]=0$, $v[i]=1$, $u[j]=1$, $v[j]=0$. 

If $j=i+1$, then $v$ is obtained by an inflation on $u$, so $u\leq_I v$.

Otherwise there exists an index $t$ such that $i<t<j$. Two cases arise:
\begin{description}
\item{$u[t]=v[t]=0$: } there exists $w=u[1\dots t-1]\:1\:v[t+1\dots k]$ that lies between $u$ and $v$ in the dominance order;
\item{$u[t]=v[t]=1$: } there exists $w=v[1\dots t-1]\:0\:u[t+1\dots k]$ that lies between $u$ and $v$ in the dominance order.
\end{description}

In both cases we reach a contradiction with the assumption that $v$ covers $u$.

So when $v$ covers $u$ in the dominance order, we also have $u\leq_I v$.
Transitivity leads to the thesis.   
\qed \end{proof}

 \section{Acknowledgments}

 We warmly thank Lama Tarsissi for useful discussions on Digitally Convex words.

\bibliographystyle{abbrv}

\begin{thebibliography}{10}

\bibitem{Be07}
J.~Berstel.
\newblock {Sturmian and Episturmian Words (A Survey of Some Recent Results)}.
\newblock In {\em CAI 2007}, volume 4728 of {\em Lecture Notes in Comput.
  Sci.}, pages 23--47. Springer, 2007.

\bibitem{DBLP:journals/tcs/BerstelL97}
J.~Berstel and A.~de~Luca.
\newblock Sturmian words, {L}yndon words and trees.
\newblock {\em Theor. Comput. Sci.}, 178(1-2):171--203, 1997.

\bibitem{Book08}
J.~Berstel, A.~Lauve, C.~Reutenauer, and F.~Saliola.
\newblock {\em Combinatorics on Words: Christoffel Words and Repetition in
  Words}, volume~27 of {\em CRM monograph series}.
\newblock American Mathematical Society, 2008.

\bibitem{DBLP:journals/ejc/BertheLR08}
V.~Berth{\'{e}}, A.~de~Luca, and C.~Reutenauer.
\newblock On an involution of {C}hristoffel words and {S}turmian morphisms.
\newblock {\em Eur. J. Comb.}, 29(2):535--553, 2008.

\bibitem{DBLP:conf/dgci/BodiniDJM13}
O.~Bodini, P.~Duchon, A.~Jacquot, and L.~R. Mutafchiev.
\newblock Asymptotic analysis and random sampling of digitally convex
  polyominoes.
\newblock In {\em Discrete Geometry for Computer Imagery - 17th {IAPR}
  International Conference, {DGCI} 2013, Seville, Spain, March 20-22, 2013.
  Proceedings}, volume 7749 of {\em Lecture Notes in Computer Science}, pages
  95--106. Springer, 2013.

\bibitem{JTNB_1993}
J.-P. Borel and F.~Laubie.
\newblock Quelques mots sur la droite projective r\'eelle.
\newblock {\em Journal de th\'eorie des nombres de Bordeaux}, 5(1):23--51,
  1993.

\bibitem{DBLP:journals/ita/BorelR06}
J.-P. Borel and C.~Reutenauer.
\newblock On {C}hristoffel classes.
\newblock {\em {RAIRO} Theor. Informatics Appl.}, 40(1):15--27, 2006.

\bibitem{DBLP:journals/ijfcs/BrlekHNR04}
S.~Brlek, S.~Hamel, M.~Nivat, and C.~Reutenauer.
\newblock On the palindromic complexity of infinite words.
\newblock {\em Int. J. Found. Comput. Sci.}, 15(2):293--306, 2004.

\bibitem{DBLP:journals/pr/BrlekLPR09}
S.~Brlek, J.-O. Lachaud, X.~Proven{\c{c}}al, and C.~Reutenauer.
\newblock Lyndon + {C}hristoffel = digitally convex.
\newblock {\em Pattern Recognit.}, 42(10):2239--2246, 2009.

\bibitem{DELUCA1982207}
A.~{de Luca}.
\newblock On some combinatorial problems in free monoids.
\newblock {\em Discrete Mathematics}, 38(2):207--225, 1982.

\bibitem{DBLP:journals/tcs/Luca97a}
A.~de~Luca.
\newblock Sturmian words: Structure, combinatorics, and their arithmetics.
\newblock {\em Theor. Comput. Sci.}, 183(1):45--82, 1997.

\bibitem{DELUCA2024114935}
A.~{De Luca} and G.~Fici.
\newblock Some results on digital segments and balanced words.
\newblock {\em Theoretical Computer Science}, 1021:114935, 2024.

\bibitem{DBLP:journals/tcs/LucaM94}
A.~de~Luca and F.~Mignosi.
\newblock Some combinatorial properties of {S}turmian words.
\newblock {\em Theor. Comput. Sci.}, 136(2):361--285, 1994.

\bibitem{DBLP:conf/cwords/DulioFRTV17}
P.~Dulio, A.~Frosini, S.~Rinaldi, L.~Tarsissi, and L.~Vuillon.
\newblock First steps in the algorithmic reconstruction of digital convex sets.
\newblock In {\em Combinatorics on Words - 11th International Conference,
  {WORDS} 2017, Montr{\'{e}}al, QC, Canada, September 11-15, 2017,
  Proceedings}, volume 10432 of {\em Lecture Notes in Computer Science}, pages
  164--176. Springer, 2017.

\bibitem{DBLP:journals/jco/DulioFRTV22}
P.~Dulio, A.~Frosini, S.~Rinaldi, L.~Tarsissi, and L.~Vuillon.
\newblock Further steps on the reconstruction of convex polyominoes from
  orthogonal projections.
\newblock {\em J. Comb. Optim.}, 44(4):2423--2442, 2022.

\bibitem{DBLP:journals/jcss/Fici14}
G.~Fici.
\newblock On the structure of bispecial {S}turmian words.
\newblock {\em J. Comput. Syst. Sci.}, 80(4):711--719, 2014.

\bibitem{Lipatov198267}
E.~Lipatov.
\newblock A classification of binary collections and properties of homogeneity
  classes.
\newblock {\em Problemy Kibernet.}, 39:67--84, 1982.
\newblock in Russian.

\bibitem{Lot01}
M.~Lothaire.
\newblock {\em Combinatorics on Words}.
\newblock Cambridge Mathematical Library. Cambridge Univ. Press, 1997.

\bibitem{LothaireAlg}
M.~Lothaire.
\newblock {\em Algebraic Combinatorics on Words}.
\newblock Encyclopedia of Mathematics and its Applications. Cambridge Univ.
  Press, 2002.

\bibitem{DBLP:conf/stacs/Melancon96}
G.~Melan{\c{c}}on.
\newblock Lyndon factorization of infinite words.
\newblock In {\em {STACS} 96, 13th Annual Symposium on Theoretical Aspects of
  Computer Science, Grenoble, France, February 22-24, 1996, Proceedings},
  volume 1046 of {\em Lecture Notes in Computer Science}, pages 147--154.
  Springer, 1996.

\bibitem{Mignosi_On_the_number}
F.~Mignosi.
\newblock On the number of factors of {S}turmian words.
\newblock {\em Theor. Comput. Sci.}, 82:71--84, 1991.

\bibitem{DBLP:conf/birthday/MignosiRS99}
F.~Mignosi, A.~Restivo, and M.~Sciortino.
\newblock Forbidden factors in finite and infinite words.
\newblock In {\em Jewels are Forever, Contributions on Theoretical Computer
  Science in Honor of Arto Salomaa}, pages 339--350. Springer, 1999.

\bibitem{Pirillo2001}
G.~Pirillo.
\newblock A curious characteristic property of standard {Sturmian} words.
\newblock In H.~Crapo and D.~Senato, editors, {\em Algebraic Combinatorics and
  Computer Science: A Tribute to Gian-Carlo Rota}, pages 541--546. Springer
  Milan, Milano, 2001.

\bibitem{DBLP:journals/tcs/Provencal11}
X.~Proven{\c{c}}al.
\newblock Minimal non-convex words.
\newblock {\em Theor. Comput. Sci.}, 412(27):3002--3009, 2011.

\bibitem{DBLP:journals/tcs/Reutenauer15}
C.~Reutenauer.
\newblock Studies on finite {S}turmian words.
\newblock {\em Theor. Comput. Sci.}, 591:106--133, 2015.

\bibitem{ReutenauerMarkoff}
C.~Reutenauer.
\newblock {\em {From Christoffel Words to Markoff Numbers}}.
\newblock Oxford University Press, 2018.

\bibitem{OEIS}
N.~J.~A. Sloane et~al.
\newblock The on-line encyclopedia of integer sequences.
\newblock Electronic resource, available at \url{https://oeis.org}.

\bibitem{EIS}
N.~J.~A. Sloane and S.~Plouffe.
\newblock {\em The Encyclopedia of Integer Sequences 1st Edition}.
\newblock Academic Press, 1995.

\bibitem{LamaPhD}
L.~Tarsissi.
\newblock {\em Balance properties on Christoffel words and applications}.
\newblock PhD thesis, Universit\'e Grenoble Alpes, 2017.

\bibitem{DBLP:conf/acpr/TarsissiCKR19}
L.~Tarsissi, D.~Coeurjolly, Y.~Kenmochi, and P.~Romon.
\newblock Convexity preserving contraction of digital sets.
\newblock In {\em Pattern Recognition - 5th Asian Conference, {ACPR} 2019,
  Auckland, New Zealand, November 26-29, 2019, Revised Selected Papers, Part
  {II}}, volume 12047 of {\em Lecture Notes in Computer Science}, pages
  611--624. Springer, 2019.

\bibitem{lama_yukiko_2023}
L.~Tarsissi, Y.~Kenmochi, P.~Romon, D.~Coeurjolly, and J.-P. Borel.
\newblock Convexity preserving deformations of digital sets: Characterization
  of removable and insertable pixels.
\newblock {\em Discrete Appl. Math.}, 341:270--289, 2023.

\end{thebibliography}

\end{document}